\newtheorem{theo}{Theorem}[section]
\newtheorem{prop}[theo]{Proposition}
\newtheorem{lemma}[theo]{Lemma}
\newtheorem{defn}[theo]{Definition}
\newtheorem{cor}[theo]{Corollary}
\newcommand{\cal}{\mathcal}
\begin{document}
\title{On the knot Floer homology of a class of satellite knots}
\author{Yuanyuan Bao}
\address{
Department of Mathematics,
Tokyo Institute of Technology,
Oh-okayama, Meguro, Tokyo 152-8551, Japan
}
\email{bao.y.aa@m.titech.ac.jp}
\date{}
\begin{abstract}
Knot Floer homology is an invariant for knots in the three-sphere for which the Euler characteristic is the Alexander-Conway polynomial of the knot. The aim of this paper is to study this homology for a class of satellite knots, so as to see how a certain relation between the Alexander-Conway polynomials of the satellite, companion and pattern is generalized on the level of the knot Floer homology. We also use our observations to study a classical geometric invariant, the Seifert genus, of our satellite knots.
\end{abstract}
\keywords{knot Floer homology, satellite knot, Seifert genus}
\thanks{The author is supported by scholarship from
the Ministry of Education, Culture, Sports, Science and Technology of Japan.}
\maketitle

{
\section{Introduction}
Given a link $L$ in the three-sphere $S^{3}$, the knot Floer homology of $L$ \cite{MR2065507, Rasmussenthesis} is denoted 
$\bigoplus_{i\in {\mathbb Z}}\widehat{\operatorname {HFK}}(S^{3}, L, i)$ with $i$ being the Alexander grading. It generalizes the Alexander-Conway polynomial in the following sense:
\begin{theo}[\cite{MR2065507}, \cite{Rasmussenthesis}]
\label{euler}
Given an $l$-component link $L\subset S^{3}$, we see
$$\sum_{i}\chi(\widehat{\operatorname {HFK}}(S^{3}, L, i))\cdot T^{i}=(T^{1/2}-T^{-1/2})^{l-1}\Delta_{L}(T).$$
Here $\chi(\widehat{\operatorname {HFK}}(S^{3}, L, i))$ denotes the Euler characteristic, and $\Delta_{L}(T)$ denotes the normalized Alexander-Conway polynomial of $L$.
\end{theo}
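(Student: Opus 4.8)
The plan is to compute the left-hand side directly from a Heegaard diagram and to recognize the resulting signed, Alexander-graded count of generators as a determinantal expansion of the Alexander polynomial.

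First I would fix a multi-pointed Heegaard diagram $(\Sigma, \boldsymbol{\alpha}, \boldsymbol{\beta}, w, z)$ compatible with the link $L$, where $\Sigma$ has genus $g$ and there are $g$ curves in each of $\boldsymbol{\alpha}$ and $\boldsymbol{\beta}$. The generators of the chain complex $\widehat{\operatorname{CFK}}$ are the points $\mathbf{x} = (x_{1}, \dots, x_{g}) \in \mathbb{T}_{\alpha} \cap \mathbb{T}_{\beta}$, each of which selects, via a permutation $\sigma$ of the index set, one intersection point $x_{i} \in \alpha_{i} \cap \beta_{\sigma(i)}$. Since the graded Euler characteristic of a chain complex agrees with that of its homology, the left-hand side equals
$$\sum_{\mathbf{x}} (-1)^{M(\mathbf{x})}\, T^{A(\mathbf{x})},$$
where $M$ and $A$ denote the Maslov and Alexander gradings.

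Next I would organize this sum as a determinant. Forming the $g \times g$ matrix whose $(i,j)$ entry is the weighted count $\sum_{x \in \alpha_{i} \cap \beta_{j}} \epsilon(x)\, T^{a(x)}$ of local contributions, with $\epsilon(x)$ the local intersection sign and $a(x)$ the local Alexander weight read off from the two basepoints, the permutation expansion of $\det$ reproduces exactly the generator sum above: the sign of each permutation matches the relative Maslov grading, and the local Alexander weights add along each term to give $T^{A(\mathbf{x})}$. This reduces the theorem to identifying a determinant built from the Heegaard diagram with the normalized Alexander--Conway polynomial.

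The final and hardest step is this identification. The $\alpha$- and $\beta$-curves yield a Heegaard presentation of $\pi_{1}$ of the link complement, and the matrix above is, after abelianizing and sending each meridian to $T$, precisely a Fox Jacobian (Alexander matrix) for that presentation; equivalently, one invokes the theorem of Ozsv\'ath--Szab\'o that the Euler characteristic of Heegaard Floer homology computes the Turaev torsion, which by the Milnor--Turaev theorem equals the normalized Alexander--Conway polynomial. The correction factor $(T^{1/2} - T^{-1/2})^{l-1}$ is exactly the discrepancy, for an $l$-component link, between the torsion of the complement and the symmetrized Conway normalization, so I would account for it by tracking the normalization carefully in the multi-component case. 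I expect the main obstacle to be precisely this bookkeeping: aligning the symmetrization conventions for the Maslov and Alexander gradings with Turaev's sign-refined torsion, so that the state sum lands on the correctly normalized polynomial rather than on a unit multiple of it.
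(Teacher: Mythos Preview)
The paper does not prove this theorem; it is quoted as a known foundational result from \cite{MR2065507} and \cite{Rasmussenthesis} and is used throughout as a black box. So there is no ``paper's own proof'' to compare against.

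That said, a few remarks on your sketch. The overall strategy---rewrite the graded Euler characteristic as a signed, $T$-weighted count of intersection points, organize it as a determinant, and identify that determinant via Fox calculus or Turaev torsion with the Alexander polynomial---is indeed the route taken in the original Ozsv\'ath--Szab\'o argument. Two points deserve care. First, your claim that ``the sign of each permutation matches the relative Maslov grading'' is too quick: the Maslov grading difference between two generators is computed from a Whitney disk and involves more than $\operatorname{sgn}(\sigma)$; what one actually shows is that $(-1)^{M(\mathbf{x})}$ differs from the product of local intersection signs by a global factor independent of $\mathbf{x}$, which is absorbed into the normalization. Second, for an $l$-component link the construction in \cite{MR2065507} does not use a multi-pointed diagram directly but passes to an associated knot in $\#^{\,l-1}(S^{1}\times S^{2})$; the factor $(T^{1/2}-T^{-1/2})^{l-1}$ then arises from the K\"unneth contribution of the extra $S^{1}\times S^{2}$ summands, not merely from ``normalization bookkeeping.'' If you want to carry out the argument, this is the step where the correction factor acquires a concrete meaning.
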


Therefore, it is natural to ask whether a given property of the Alexander-Conway polynomial has a generalization in the context of  knot Floer homology. This homology has proven to be a useful tool for studying some geometric properties of knots, such as the sliceness \cite{MR2026543} and the Seifert genus \cite{MR2023281} etc. Therefore, any new observaton would possibly give us new hints and insights into understanding these properties of knots.

When considering satellite knots, we know the following Equation \eqref{alex}. Given a knot $K\subset S^{3}$ and a non-trivially properly embedded simple closed curve $P\subset \textbf{S}$, where $\textbf{S}$ is a solid torus, we let $K_{t}^{P}$ denote the $t$-twisted satellite knot for $t\in {\mathbb Z}$. Here $K$ and $P$ work as the companion and pattern of $K_{t}^{P}$, respectively. The sign of a full-twist is defined in Figure \ref{fig:f18}. The Alexander-Conway polynomial satisfies the following equation (see \cite{MR1472978}):
\begin{equation}
\label{alex}
\Delta_{K_{t}^{P}}(T)=\Delta_{K}(T^{\xi})\cdot\Delta_{O_{t}^{P}}(T).
\end{equation}
Here $O$ denotes the unknot, and $\xi$ is an integer such that $\xi$ times a generator of $H_{1}(\textbf{S}, {\mathbb Z})$ equals $[P]\in H_{1}(\textbf{S}, {\mathbb Z})$. 

\begin{figure}[h]
 \setlength{\abovecaptionskip}{-0.1cm}
	\centering
		\includegraphics[width=0.6\textwidth]{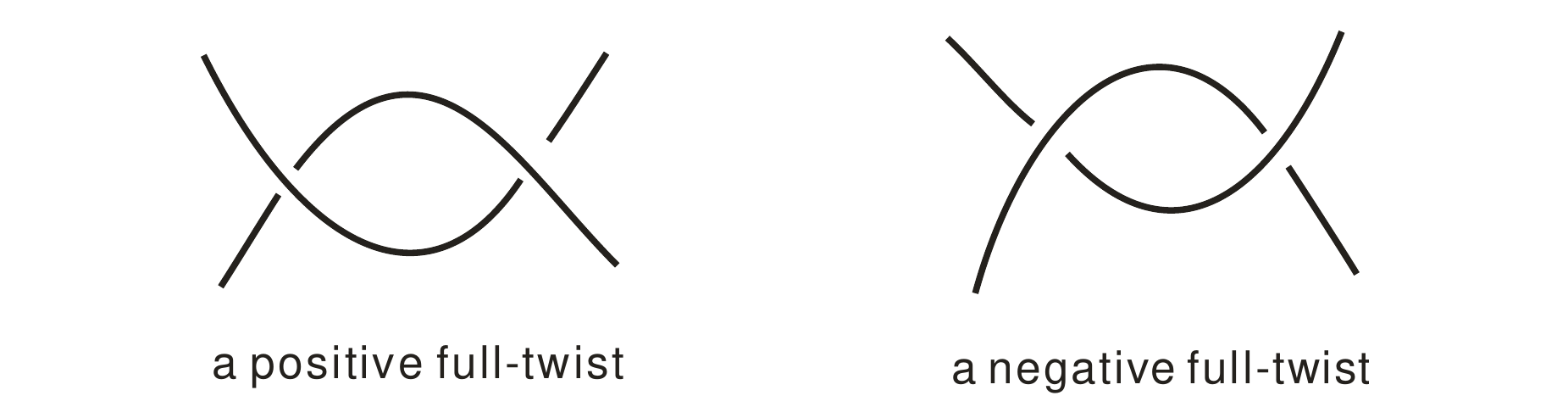}
	\caption{The sign convention for a full-twist.}
	\label{fig:f18}	
\end{figure}

In this paper, we study the knot Floer homology of $(S^{3}, K_{t}^{P})$ for a class of given patterns. One of our purposes is to see how Equation (\ref{alex}) is developed in knot Floer homology. 

This subject has been studied by Hedden in two cases. The $(p, pn\pm 1)$ cabled knots were studied in \cite{Heddenthesis}, where he showed the homology of the cabled knot of a knot $K$ depends only on the filtered chain homotopy type of $\widehat{\operatorname {CFK}}(S^{3}, K)$ when $|n|\gg 0$. In \cite{MR2372849} Hedden studied the knot Floer homology of Whitehead doubles comprehensively. As one important application, he showed a way to find infinitely many topologically slice but not smoothly slice knots. For other research related to this topic, the reader is referred to \cite{Ordingthesis} and \cite{MR2171814}.

Before stating our results, we review some information about the knot Floer homology theory. 
Ozsv{\'a}th and Szab{\'o} \cite{MR2113019} defined a homology theory for 
oriented closed 3-manifolds, known as the Heegaard Floer homology or Ozsv{\'a}th-Szab{\'o} homology. In this paper, we focus on the hat version of this homology. The chain complex associated with a 3-manifold $M$ is denoted $\widehat{\operatorname {CF}}(M)$, and its homology $\widehat{\operatorname {HF}}(M)$ is a topological invariant. Given a null-homologous embedded curve $K$ in the 3-manifold $M$, Ozsv$\rm{\acute{a}}$th and Szab$\rm{\acute{o}}$ \cite{MR2065507}, and Rasmussen \cite{Rasmussenthesis} independently noticed that $K$ induces a filtration to the complex $\widehat{\operatorname {CF}}(M)$, and they proved that the filtered chain homotopy type with respect to the new filtration is a knot invariant. When $M=S^{3}$, let $F(K,m)$ be the subcomplex of $\widehat{\operatorname {CF}}(S^{3})$ of filtration $m$ for $m \in {\mathbb Z}$ and let $\widehat{\operatorname {HF}}(F(K, m))$ denote the homology of the subcomplex $F(K,m)$. Then there exist inclusive relations:
$$0\subset\cdots\subset F(K, m)\subset F(K, m+1)\subset\cdots\subset \widehat{\operatorname {CF}}(S^{3}).$$
Let ${\widehat{\operatorname {CF}}(S^{3})}/{F(K, m)}$ denote the quotient complex of $F(K, m)$ in $\widehat{\operatorname {CF}}(S^{3})$. 

The associated graded chain complex is $\widehat{\operatorname {CFK}}(S^{3}, K, m)={F(K, m)}/{F(K, m-1)}$ and its homology is denoted $\widehat{\operatorname {HFK}}(S^{3}, K, m)$. The homological grading of the homology group is renamed the Maslov grading, and the new grading induced from the filtration is called the Alexander grading. 
The Ozsv$\rm{\acute{a}}$th-Szab$\rm{\acute{o}}$ $\tau$ invariant is defined as:
$$\tau(K)={\rm min}\{m\in {\mathbb Z}| i_{*}: \widehat{\operatorname {HF}}(F(K, m))\longrightarrow \widehat{\operatorname {HF}}(S^{3}) \text{ is non-trivial}\}.$$ 

Now we define the patterns to be used here. Consider the tangle $(B^{3}, S_{r})$ defined in Figure \ref{fig:f1} for any $r\in {\mathbb Z}_{\geq 0}$. Inside a rectangle marked by an odd (even, respectively) number is a negative (positive, respectively) full-twist. The four endpoints of the tangle are $A$, $B$, $C$, and $D$. Connecting $A$ to $D$, and $B$ to $C$ gets the 2-bridge knot $C(\underbrace{-2,-2,\dots,-2}_{2r})$ in Conway's normal form, which is denoted $C(2r)$ here for short, while connecting $A$ to $B$, and $C$ to $D$ gives rise to the 2-bridge link $C(\underbrace{-2,-2,\dots,-2}_{2r+1})$, and it is denoted $C(2r+1)$. The patterns used in this paper are shown in the middle of Figure \ref{fig:f1}. They are embeddings of $\{C(2r)\}_{r\geq 0}$ into the solid torus $\textbf{S}$, and we denote them by $\{P_{r}\}_{r\geq 0}$. When $r=0$, the corresponding satellite knots are the Whitehead doubles. 

\begin{figure}[h]
	\centering
		\includegraphics[width=1.0\textwidth]{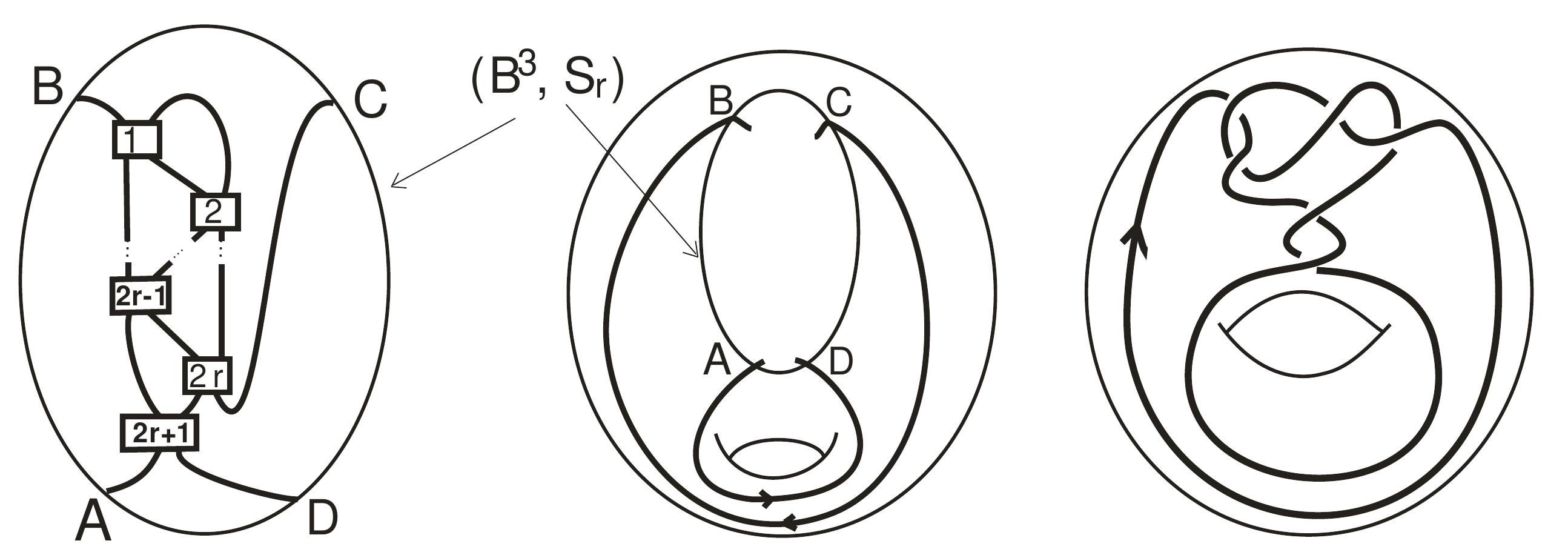}
	\caption{The left-hand figure is the tangle $(B^{3}, S_{r})$. The middle figure illustrates the patterns in the solid torus $\textbf{S}$. The right-hand figure shows the pattern in the case $r=1$ as an example.}
	\label{fig:f1}
\end{figure}

The research in this paper is motivated by Hedden's research on Whitehead doubles in \cite{MR2372849}. We found that many of his ideas can be used to study a huge class of satellite knots. Many interesting applications are given in \cite{MR2372849}, so we hope to get more applications, especially those to some geometric invariants of knots, by studying more satellite knots. We choose $\{P_{r}\}_{r\geq 0}$ as the patterns since the satellite knots obtained are certain extensions of the Whitehead doubles.
 
Our main result is as follows.

\begin{theo}
\label{main1}
Let $K\subset S^{3}$ be a knot with Seifert genus $g(K)=g$. Then the top Alexander grading of $\widehat{\operatorname {HFK}}(S^{3}, K_{t}^{P_{r}})$ is $r+1$ when $t\neq 0$ for $r\in {\mathbb Z}_{\geq 0}$. At this grading, there exists an integer $N$ so that for $t>N>0$, the following hold.
$$\widehat{\operatorname {HFK}}_{\ast}(S^{3}, K_{t}^{P_{r}}, r+1)\cong {\mathbb Z}^{t-2g-2}_{(r+1)}\bigoplus^{g}_{m=-g}\left[\widehat{\operatorname {HF}}_{\ast-r-1}(F(K, m))\right]^{2},$$
$$\widehat{\operatorname {HFK}}_{\ast}(S^{3}, K_{-t}^{P_{r}}, r+1)\cong {\rm Tor}^{\pm} \bigoplus {\mathbb Z}^{2\tau(K)-2g-2}_{(r+1)}\bigoplus {\mathbb Z}^{2\tau(K)+t}_{(r)}\bigoplus^{g}_{m=-g}\left[\widehat{\operatorname {HF}}_{\ast-r-1}(F(K, m))\right]^{2},$$
where {\rm Tor} denotes a finite Abelian group, and the subindices of ${\mathbb Z}$ represent the Maslov gradings.
\end{theo}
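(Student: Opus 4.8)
The plan is to adapt Hedden's strategy for Whitehead doubles in \cite{MR2372849} to the whole family $\{P_r\}_{r\geq 0}$. The pattern $P_r$ is an embedding of $C(2r)$ into $\textbf{S}$ of winding number $\xi=0$, as one reads off from Figure \ref{fig:f1}; this is consistent with the top Alexander grading being $r+1$ independently of $g(K)$, since a nonzero $\xi$ would force it to grow with $g(K)$ through Equation \eqref{alex}. Because $\xi=0$, the companion $K$ should enter only through the filtered chain homotopy type of $\widehat{\operatorname{CFK}}(S^{3},K)$. First I would build a doubly-pointed Heegaard diagram for $(S^{3},K_{t}^{P_{r}})$ by taking a fixed finite diagram for the tangle $(B^{3},S_{r})$ inside the solid torus $\textbf{S}$ and splicing in the companion through a winding region, in which one curve wraps $t$ times around the longitude of $\textbf{S}$. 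The intersection points then factor as a local ``pattern'' part, read off from the $2r$ twist regions of $S_{r}$, times a ``companion'' part coming from a Heegaard diagram of $K$, and the Alexander grading decomposes accordingly.

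Next I would locate the top Alexander grading and describe its generators. Using the genus-detection theorem \cite{MR2023281} --- $\widehat{\operatorname{HFK}}(S^{3},K,g)\neq 0$ together with vanishing above grading $g$ --- and the local contribution of the twist regions of $S_{r}$, the top grading should come out to $r+1$: the summand $1$ is the universal doubling contribution already present for the Whitehead pattern $P_{0}$, while the summand $r$ is the top Alexander grading carried by $C(2r)$. At this extremal grading only the outermost intersection points survive, and I expect them to split into two isomorphic blocks (the source of the exponent $2$ in $[\,\cdot\,]^{2}$), each block indexed by a generator of $\widehat{\operatorname{CFK}}(S^{3},K)$ in Alexander grading $m$ with $-g\le m\le g$.

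Then comes the identification of the homology with the filtered invariants of $K$. Restricting the differential to the top Alexander grading and analysing the winding region, the surviving complex should, for $|t|$ large, be governed by the large-surgery description of knot Floer homology: the homologies $\widehat{\operatorname{HF}}(F(K,m))$ of the filtration subcomplexes appear, doubled and summed over $-g\le m\le g$, while the $t$ twist regions contribute a free group whose rank grows linearly in $t$, yielding the summand $\mathbb{Z}^{t-2g-2}_{(r+1)}$ once the $2g+2$ intersection points near the genus-$g$ band are accounted for. The overall Maslov shift by $r+1$ is read off from the local pattern data. This establishes the $t>N>0$ formula.

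Finally I would treat the negative framing, where the two cases differ precisely in the behaviour of the inclusion-induced maps $i_{*}:\widehat{\operatorname{HF}}(F(K,m))\to\widehat{\operatorname{HF}}(S^{3})$, whose non-triviality is measured by $\tau(K)=\min\{m\mid i_{*}\neq 0\}$. Tracking which outermost generators are killed by the differential for $-t$ in place of $t$ should replace part of the free group by contributions counted by $\tau(K)$, producing the summands $\mathbb{Z}^{2\tau(K)-2g-2}_{(r+1)}\oplus\mathbb{Z}^{2\tau(K)+t}_{(r)}$ together with a finite torsion group $\mathrm{Tor}^{\pm}$. The main obstacle, I expect, is exactly this last step combined with the precise bookkeeping of the differential in the winding region: pinning down the absolute Maslov gradings and the exact free ranks, and --- most delicately --- showing that the $\tau$-dependent cancellation in the negative case happens exactly as stated, since this is where the filtered, rather than merely graded, structure of $\widehat{\operatorname{CFK}}(S^{3},K)$ is genuinely used.
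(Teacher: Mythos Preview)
Your outline follows the paper's strategy in broad strokes --- spliced Heegaard diagram, locate the extremal Alexander grading, invoke large-surgery formulae --- but it omits the central structural step and misidentifies where the top grading comes from.

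In the paper the generators of $\widehat{\operatorname{CFK}}(S^{3},K_{t}^{P_{r}})$ split into two classes: those pairing a generator of $\widehat{\operatorname{CFK}}(S^{3},P_{r})$ (whose homology is that of $C(2r)$) with one of $\widehat{\operatorname{CFK}}(S^{3},K)$, and those pairing a generator of $\widehat{\operatorname{CFK}}(S^{1}\times S^{2},P_{r})$ (whose homology is that of the \emph{link} $C(2r+1)$) with one of $\widehat{\operatorname{CFK}}(S_{t}^{3}(K),\mu_{K})$. The top grading $r+1$ is carried by a \emph{single} pattern generator $a_{f(r)}$ in the second class, because $C(2r+1)$ is fibred alternating with monic Alexander polynomial of top power $(r+1)/2$. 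It does not arise as ``$1$ from the Whitehead pattern plus $r$ from $C(2r)$'' as you write, and genus detection \cite{MR2023281} is not invoked to find it --- the grading is computed directly from explicit domains.

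This produces the bridge you never state: at the top grading there is a chain-level identification
\[
\widehat{\operatorname{CFK}}(S^{3},K_{t}^{P_{r}},r+1)\;=\;\bigoplus_{s_{i}\in\operatorname{Spin}^{c}(S_{t}^{3}(K))}\widehat{\operatorname{CFK}}(S_{t}^{3}(K),\mu_{K},s_{i}),
\]
and it is to the \emph{right}-hand side that Hedden's large-surgery theorem applies, yielding $\widehat{\operatorname{HF}}(F(K,m))$ for $t>N$ and the quotient terms $\widehat{\operatorname{HF}}\bigl(\widehat{\operatorname{CF}}(S^{3})/F(K,m)\bigr)$ for $t<-N$. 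Without this identification your appeal to ``the large-surgery description'' has no object to act on. The Maslov shift by $r+1$ is then obtained not from ``local pattern data'' in the abstract, but by exhibiting an explicit hexagonal domain $\phi_{i}$ of Maslov index $1$ connecting $a_{f(i-1)}$ to $a_{f(i)}$ and iterating $r$ times.

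For the negative framing the paper does not ``track which outermost generators are killed''. It first has the formula in terms of quotient complexes, then proves a rank-comparison lemma (parallel to \cite[Lemma~5.7]{MR2372849}) showing that $\widehat{\operatorname{HFK}}_{*}(K_{t}^{P_{r}},r+1)$ and $\widehat{\operatorname{HFK}}_{*}(K_{-t}^{P_{r}},r+1)$ agree for $*\neq r,r+1$ and differ in rank by $t\pm 2\tau(K)$ at those two gradings; this is what replaces the quotients by the subcomplexes $F(K,m)$, with $\mathrm{Tor}$ absorbing what the rank comparison cannot see. Your direct cancellation picture, as stated, does not give a mechanism for this replacement.
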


The convention for equations in Theorem \ref{main1} is, if the power of a summand on the right side of an equation is negative, we simply move this summand to the left-hand side of the equation and convert the power into its opposite value, just as we usually do for multiplication of numbers.

From Equation (\ref{alex}), the Alexander-Conway polynomial of $K_{t}^{P_{r}}$ satisfies
\begin{equation}
\label{shape}
\Delta_{K_{t}^{P_{r}}}(T)=\Delta_{O_{t}^{P_{r}}}(T),
\end{equation}
since $\xi=0$ for all the patterns in $\{P_{r}\}_{r\geq 0}$. In other words, the value of the Alexander polynomial $\Delta_{K_{t}^{P_{r}}}(T)$ does not depend on the choice of the companion knot $K$. On the other hand, we see from Theorem \ref{main1} that the knot Floer homology $\widehat{\operatorname {HFK}}(S^{3}, K_{t}^{P_{r}})$ at the grading $r+1$ with large twist
is determined by the filtered chain homotopy type of $\widehat{\operatorname {CFK}}(S^{3}, K)$. This observation implies again that knot Floer homology is much more powerful then the Alexander-Conway polynomial in revealing properties of knots. A proof of Theorem \ref{main1} is given in Section 3. 

From Theorem \ref{main1}, we see that the top Alexander grading of $\widehat{\operatorname {HFK}}(S^{3}, K_{t}^{P_{r}})$ when $t$ is not equal to zero is $r+1$, which only depends on the pattern $P_{r}$. This fact is proved here directly from the definition of knot Floer homology. It is in line with Equation (\ref{shape}) from the viewpoint of Theorem \ref{euler}. Since the Seifert genus of a knot can be detected from its knot Floer homology (see Theorem \ref{seifert}), the Seifert genus of $K_{t}^{P_{r}}$ for $t\neq 0$ is $r+1$. As an application and also a complement to Theorem \ref{main1}, we show that the top Alexander grading of $\widehat{\operatorname {HFK}}(S^{3}, K_{0}^{P_{r}})$ for $r\in {\mathbb Z}_{\geq 0}$ is $r+1$ as well for non-trivial knot $K\subset S^{3}$, which implies the Seifert genus $g(K_{0}^{P_{r}})$ is $r+1$. We state this result as Corollary \ref{main2}. Note that $g(K_{0}^{P_{r}})$ cannot be detected from the Alexander-Conway polynomial $\Delta_{K_{0}^{P_{r}}}(T)$ since ${\rm deg}(\Delta_{K_{0}^{P_{r}}}(T))=2r$.

\begin{cor}
\label{main2}
For any non-trivial knot $K\subset S^{3}$, the Seifert genus $g(K_{0}^{P_{r}})$ of the satellite knot $K_{0}^{P_{r}}$ is $r+1$.
\end{cor}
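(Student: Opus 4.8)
The plan is to determine $g(K_0^{P_r})$ by locating the top Alexander grading in which $\widehat{\operatorname{HFK}}(S^3, K_0^{P_r})$ is non-trivial and then invoking the genus-detection theorem (Theorem \ref{seifert}), which identifies the Seifert genus with exactly this top grading. Thus the whole corollary reduces to the single assertion that the top non-vanishing Alexander grading of $K_0^{P_r}$ is $r+1$, and I would split this into a genus upper bound $g(K_0^{P_r}) \le r+1$ (vanishing of homology above grading $r+1$) and a genus lower bound $g(K_0^{P_r}) \ge r+1$ (non-vanishing of homology at grading $r+1$), treating the two ends separately.

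For the upper bound I would exhibit an explicit Seifert surface of genus $r+1$. The pattern $P_r$ is the embedding of the $2$-bridge knot $C(2r)$ into the solid torus $\mathbf{S}$ with winding number $\xi = 0$; because the winding number vanishes, $[P_r] = 0$ in $H_1(\mathbf{S},\mathbb{Z})$ and $P_r$ bounds a compact orientable surface $\widehat{F} \subset \mathbf{S}$ inside the solid torus. A direct genus count on the twist regions of Figure \ref{fig:f1} shows $\widehat{F}$ can be taken to have genus $r+1$; for $r=0$ this is just the genus-one once-punctured torus of the Whitehead pattern. Realizing $\mathbf{S}$ as an untwisted ($t=0$) tubular neighborhood of $K$ embeds $\widehat{F}$ into $S^3$ as a Seifert surface for $K_0^{P_r}$, giving $g(K_0^{P_r}) \le r+1$, whence the general bound relating the Alexander grading to the genus forces $\widehat{\operatorname{HFK}}(S^3, K_0^{P_r}, i) = 0$ for every $i > r+1$.

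For the lower bound I would reuse the Heegaard-diagrammatic set-up of the proof of Theorem \ref{main1}, specializing the twisting parameter to $t=0$, and show that $\widehat{\operatorname{HFK}}(S^3, K_0^{P_r}, r+1) \ne 0$; by the deep direction of Theorem \ref{seifert} this yields $g(K_0^{P_r}) \ge r+1$, and combined with the previous paragraph gives $g(K_0^{P_r}) = r+1$. The key structural point is that the top-grading complex is again governed by the filtered chain homotopy type of $\widehat{\operatorname{CFK}}(S^3, K)$, but the free summand $\mathbb{Z}^{t-2g-2}_{(r+1)}$ (respectively $\mathbb{Z}^{2\tau(K)+t}_{(r)}$) produced by the twist region, which made non-vanishing automatic for large $|t|$ in Theorem \ref{main1}, is now absent. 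Consequently the surviving class at grading $r+1$ must come from the companion contribution $\bigoplus_{m=-g}^{g}\left[\widehat{\operatorname{HF}}(F(K,m))\right]^{2}$, and this is precisely where the hypothesis that $K$ is non-trivial becomes indispensable: for $K = O$ one has $g = 0$, the surviving group degenerates, and the top grading falls to $r$, consistent with $g(C(2r)) = r$ and with $\deg \Delta_{K_0^{P_r}} = 2r$ as recorded through Equation \eqref{shape} and Theorem \ref{euler}.

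The main obstacle will be this last non-vanishing step. Unlike the $t \ne 0$ situation, there is no large twist-generated free group available to absorb potential cancellations, so I must keep precise track of the induced differentials on the top-grading associated graded complex at $t=0$ and verify that the companion summand genuinely survives to homology rather than being killed. Isolating the portion of $\bigoplus_{m=-g}^{g}\left[\widehat{\operatorname{HF}}(F(K,m))\right]^{2}$ that detects the non-triviality of $K$, and confirming that it is not annihilated by the differential, is the delicate part of the argument. Once non-vanishing at grading $r+1$ is established and combined with the vanishing above grading $r+1$, the top Alexander grading is exactly $r+1$, and Theorem \ref{seifert} delivers $g(K_0^{P_r}) = r+1$, completing the proof.
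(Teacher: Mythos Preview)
Your upper bound via an explicit Seifert surface in the solid torus matches the paper. The divergence is in the lower bound, and here your proposal has a genuine gap.

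You propose to specialize the Heegaard-diagram analysis of Theorem~\ref{main1} to $t=0$ and read off non-vanishing at grading $r+1$ directly from the companion summand $\bigoplus_{m=-g}^{g}[\widehat{\operatorname{HF}}(F(K,m))]^2$. But that summand is not available at $t=0$: the identification of $\widehat{\operatorname{CFK}}(S^3,K_t^{P_r},r+1)$ with surgery data (Proposition~\ref{identity}) goes through $\widehat{\operatorname{CFK}}(S_t^3(K),\mu_K)$, and the translation from surgery data to the filtration groups $\widehat{\operatorname{HF}}(F(K,m))$ is Theorem~\ref{help}, which requires $|t|>N$ for some large $N$. At $t=0$ the surgered manifold $S_0^3(K)$ is not even a rational homology sphere, the $\mathrm{Spin}^c$ decomposition changes character, and no analogue of Theorem~\ref{help} is proved. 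So the ``companion contribution'' you plan to show survives is not a well-defined object at $t=0$; your acknowledged ``delicate part'' is not merely delicate but unsupported by any of the machinery in the paper.

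The paper's route is indirect and avoids $t=0$ altogether in the Heegaard analysis. It argues by contradiction: assume $\widehat{\operatorname{HFK}}(S^3,K_0^{P_r},r+1)=0$, then walk the skein exact sequence~\eqref{sequence} (relating $K_{t-1}^{P_r}$, $K_t^{P_r}$, and $C(2r+1)$, with $f_3$ grading-preserving by Lemma~\ref{preserve}) outward in both directions to compute $\widehat{\operatorname{HFK}}_*(S^3,K_{\pm S}^{P_r},r+1)$ for $S$ large. Only at this large-$|t|$ endpoint does one compare with Theorem~\ref{main1}, extracting constraints on $\widehat{\operatorname{HF}}_*(F(K,m))$ and $\tau(K)$ which, via Lemma~\ref{double}, force the untwisted Whitehead double $K_0^{P_0}$ to be unknotted, hence $K$ to be trivial. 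The skein exact sequence is the missing ingredient in your plan: it is what transports information between $t=0$ and the regime $|t|\gg 0$ where the structure theorem actually holds.
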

}

The paper is organized as follows. In Section 2, we construct Heegaard diagrams for the satellite knots $K_{t}^{P_{r}}$ for all $r\in {\mathbb Z}_{\geq 0}$ and $t\neq 0$. In Section 3, we study the knot Floer homology of $K_{t}^{P_{r}}$. We start by studying the Alexander gradings of some generators of $\widehat{\operatorname {CFK}}(S^{3}, K_{t}^{P_{r}})$. After that, we focus on the top Alexander grading level, and prove Theorem~\ref{main1}. In Section 4 we prove Corollary \ref{main2}.

\section{Heegaard diagrams for the satellite knots $\{K_{t}^{P_{r}}\}_{r\geq 0}$}

\subsection{A brief review of knot Floer homology in $S^{3}$}

\begin{defn}
\label{heegaard}
\rm
For a null-homologous simple closed curve $K$ in an oriented closed 3-manifold $M$, {\it a doubly-pointed Heegaard diagram} ({\it simply a Heegaard diagram}) for $(M, K)$ is a collection of data $(\Sigma_{g}, \boldsymbol{\alpha}=\{\alpha_{1}, \alpha_{2}, \cdots, \alpha_{g}\}, \boldsymbol{\beta}=\{\beta_{1}, \beta_{2}, \cdots, \beta_{g}\},$ $ z, w)$, which satisfies the following conditions.
\begin{enumerate}
	\item The surface $\Sigma_{g}$ splits $M$ into two handlebodies $U_{1}$ and $U_{2}$. It is called the Heegaard surface, and its genus $g$ is called the Heegaard genus.  
	\item The sets $\boldsymbol{\alpha}$ and $\boldsymbol{\beta}$ are two collections of pairwise disjoint essential curves in $\Sigma_{g}$ such that $U_{1}$ and $U_{2}$ are obtained by attaching 2-handles to $\Sigma_{g}$ along the $\boldsymbol{\alpha}$ and $\boldsymbol{\beta}$ curves, respectively.
	\item There exist two arcs $a$ and $b$ in $\Sigma_{g}$ with common endpoints $z$ and $w$ such that $a\cap(\bigcup_{i=1}^{g}\alpha_{i})=\emptyset$ and $b\cap(\bigcup_{i=1}^{g}\beta_{i})=\emptyset$. The knot $K$ is isotopic to the union $a\cup b$ after pushing the arcs $a$ and $b$ properly into the handlebodies $U_{1}$ and $U_{2}$ respectively. 
	\end{enumerate}
\end{defn}

Given a knot $K\subset S^{3}$, let $(\Sigma_{g}, \boldsymbol{\alpha}, \boldsymbol{\beta}, z, w)$ be a Heegaard diagram for the knot $(S^{3}, K)$. Define a set:
$$S=\{x=(x_{1}, \cdots, x_{g})|x_{i}\in\alpha_{i}\cap \beta_{\sigma(i)}, \,\,1\leq i \leq g,\,\, \sigma\in S_{g}\}.$$ The chain complex associated with the Heegaard diagram $(\Sigma_{g}, \boldsymbol{\alpha}, \boldsymbol{\beta}, z, w)$ is freely generated by $S$.
Suppose $D_{1},\ldots, D_{m}$ denote the closures of the components of $\Sigma_{g}-\boldsymbol{\alpha}-\boldsymbol{\beta}$. A domain in $\Sigma_{g}$ is a two-chain ${\cal D}=\sum_{i=1}^{m}a_{i}\cdot D_{i}$ for $a_{i} \in {\mathbb Z}$. For two domains ${\cal D}_{1}=\sum_{i=1}^{m}a_{i}\cdot D_{i}$ and ${\cal D}_{2}=\sum_{i=1}^{m}b_{i}\cdot D_{i}$, the sum used in this paper is defined as follows:
$${\cal D}_{1}+{\cal D}_{2}:=\sum_{i=1}^{m}(a_{i}+b_{i})\cdot D_{i}.$$
The local multiplicity $n_{d}({\cal D})$ of a point $d \in {\rm int}(D_{i})\subset\Sigma_{g}$ at ${\cal D}$ is defined by $n_{d}({\cal D}):=a_{i}$. Given two generators $x$ and $y$ in $S$, a domain ${\cal D}$ is said to be connecting $x$ to $y$ if $\partial{\cal D}$ connects $x$ to $y$ along the $\boldsymbol{\alpha}$ curves and connects $y$ to $x$ along the $\boldsymbol{\beta}$ curves. 

Given two generators $x$ and $y$ and a domain ${\cal D}$ connecting $x$ to $y$, the following equations hold:
\begin{equation*}
\begin{split}
A(x)-A(y)=n_{z}({\cal D})-n_{w}({\cal D}),\\
{\rm gr}(x)-{\rm gr}(y)=\mu({\cal D})-2n_{w}({\cal D}), 
\end{split}
\end{equation*}
where $A$ and ${\rm gr}$ represent the Alexander and the Maslov gradings respectively, and $\mu({\cal D})$ is the Maslov index of $\mu({\cal D})$.

We review a combinatorial formula for Maslov index from Lipshitz. Given a domain ${\cal D}=\sum_{i=1}^{m}a_{i}D_{i}$, the point measure $n_{x}({\cal D})$ of ${\cal D}$ at a generator $x=(x_{1}, x_{2}, \cdots, x_{g})$ is defined as 
\begin{equation}
\label{point}
n_{x}({\cal D})=\sum_{i=1}^{g}n_{x_{i}}({\cal D}),
\end{equation}
where $n_{x_{i}}({\cal D})$ is defined to be the average of the coefficients of ${\cal D}$ at the four regions divided by the corresponding $\boldsymbol{\alpha}$ curve and $\boldsymbol{\beta}$ curve around $x_{i}$. Here 
$e({\cal D})=\sum_{i=1}^{m}a_{i}e(D_{i})$,
and if a region $D_{i}$ is a $2n$-gon, then 
\begin{equation}
\label{polygon}
e(D_{i})=1-{n}/{2}.
\end{equation}  

\begin{theo}[\cite{MR2240908}]
\label{lip}
Given two generators $x$ and $y$, let ${\cal D}$ be a domain connecting $x$ to $y$. Then we have
$$\mu({\cal D})=e({\cal D})+n_{x}({\cal D})+n_{y}({\cal D}).$$
\end{theo}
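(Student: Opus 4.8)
The plan is to prove the formula through Lipshitz's cylindrical reformulation of the moduli spaces, in which the Maslov index $\mu({\cal D})$ is recovered as the Fredholm index (the expected dimension) of the linearized $\bar\partial$-operator at a holomorphic representative. First I would replace holomorphic disks in $\mathrm{Sym}^{g}(\Sigma_{g})$ by holomorphic curves $u\colon S\to \Sigma_{g}\times[0,1]\times{\mathbb R}$, where $S$ is a Riemann surface with boundary and boundary punctures, the boundary of $S$ maps to the Lagrangian cylinders $\boldsymbol{\alpha}\times\{1\}\times{\mathbb R}$ and $\boldsymbol{\beta}\times\{0\}\times{\mathbb R}$, and the punctures are asymptotic at $\pm\infty$ to the trivial cylinders $x_{i}\times[0,1]$ and $y_{i}\times[0,1]$ over the intersection points comprising $x$ and $y$. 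Projecting such a curve to $\Sigma_{g}$ gives precisely the two-chain ${\cal D}$, so that the homotopy class is recorded by the domain, and the boundary data are compatible with the stated $A$- and ${\rm gr}$-grading relations.

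Second, I would compute the Fredholm index of $D\bar\partial_{u}$ by Riemann--Roch. The index splits into a contribution from the Euler characteristic of the source surface $S$ and a degree (curvature) contribution of the map over $\Sigma_{g}$; tracking the Maslov data of the Lagrangian boundary conditions and the asymptotics at the punctures, the standard computation yields an intermediate formula of the shape
\begin{equation*}
\mu({\cal D}) = g - \chi(S) + 2\,e({\cal D}),
\end{equation*}
where the Euler measure $e({\cal D})$ from \eqref{polygon} appears exactly as the degree contribution over $\Sigma_{g}$.

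Third, and this is the heart of the matter, I would pin down $\chi(S)$ for an \emph{embedded} representative, since it is the embedded moduli space that governs the differential and hence the relevant index. For an embedded curve the Euler characteristic of the source is no longer a free parameter but is forced by the combinatorics of ${\cal D}$: an adjunction-type (writhe) argument analyzing the local behaviour of $u$ at the corners lying over the points of $x$ and $y$ identifies the corner contributions with the point measures of \eqref{point} and gives
\begin{equation*}
\chi(S) = g - n_{x}({\cal D}) - n_{y}({\cal D}) + e({\cal D}).
\end{equation*}
Substituting this into the intermediate formula cancels the free $g$ and produces $\mu({\cal D}) = e({\cal D}) + n_{x}({\cal D}) + n_{y}({\cal D})$, as claimed. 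I expect this third step to be the main obstacle: one must establish that the relevant classes admit embedded representatives and then evaluate the embedded Euler characteristic correctly through the corner analysis, since it is exactly here that the averages of the four surrounding multiplicities entering $n_{x}$ and $n_{y}$ appear.

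Finally, as a consistency check I would test the formula on classes whose index is known independently. For an embedded bigon connecting a single pair of intersection points, the domain is a $2$-gon region with $e({\cal D})=\tfrac12$ and $n_{x}({\cal D})=n_{y}({\cal D})=\tfrac14$, so $e({\cal D})+n_{x}({\cal D})+n_{y}({\cal D})=1$, matching the index $1$ of a bigon; and for a periodic domain ${\cal P}$ connecting $x$ to itself the formula specializes to $\mu({\cal P})=e({\cal P})+2\,n_{x}({\cal P})$, the familiar index formula for periodic classes, which is compatible with the additivity of $\mu$ under juxtaposition.
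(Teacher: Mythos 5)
The paper offers no proof of this statement at all---it is quoted verbatim from Lipshitz \cite{MR2240908}---and your outline is precisely Lipshitz's original argument: the cylindrical reformulation, the Riemann--Roch index computation $\mu({\cal D}) = g - \chi(S) + 2e({\cal D})$, and the embedded Euler characteristic $\chi(S) = g + e({\cal D}) - n_{x}({\cal D}) - n_{y}({\cal D})$ obtained from the corner (writhe) analysis, which substitute to give the claimed formula, with your bigon and periodic-domain checks both correct. So the proposal is correct and takes essentially the same route as the source the paper relies on.
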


The formula can be simply applied in our calculations since the domains we will consider all consist of polygons of even number of edges.

It is known that the knot Floer homology detects the Seifert genus:
\begin{theo}[\cite{MR2023281}]
\label{seifert}
For any knot $K\subset S^{3}$, the knot Floer homology of $K$ detects the Seifert genus of $K$. Namely
$$g(K)={\rm max}\left\{i\in {\mathbb Z} \left|\bigoplus_{j}\widehat{\operatorname {HFK}}_{j}(S^{3}, K, i)\neq 0\right.\right\}.$$
\end{theo}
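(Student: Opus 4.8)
The plan is to prove the genus-detection formula by establishing the two inequalities
$$\deg\widehat{\operatorname {HFK}}(S^{3},K)\le g(K)\quad\text{and}\quad \deg\widehat{\operatorname {HFK}}(S^{3},K)\ge g(K),$$
where I write $\deg\widehat{\operatorname {HFK}}(S^{3},K)$ for the largest Alexander grading $i$ with $\bigoplus_{j}\widehat{\operatorname {HFK}}_{j}(S^{3},K,i)\neq 0$. Throughout I would fix a minimal-genus Seifert surface $F$ for $K$, with $g=g(K)$ the genus of $F$. Both inequalities can be read off from the Heegaard Floer homology of the zero-surgery $Y=S^{3}_{0}(K)$: capping $F$ off in $Y$ produces a closed genus-$g$ surface $\hat F\subset Y$, and the large-surgery relationship identifies the top Alexander levels of $\widehat{\operatorname {HFK}}(S^{3},K)$ with the plus-flavored group $HF^{+}(Y,\mathfrak{s})$ in the $\mathrm{Spin}^{c}$ structures $\mathfrak{s}$ graded by $\langle c_{1}(\mathfrak{s}),[\hat F]\rangle$.

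For the upper bound I would build an explicit Heegaard diagram subordinate to $F$, arising from a Morse function adapted to the surface. In such a diagram the grading-difference formula $A(x)-A(y)=n_{z}({\cal D})-n_{w}({\cal D})$ from the excerpt confines every generator to Alexander grading in $[-g,g]$, so that $\widehat{\operatorname {HFK}}(S^{3},K,i)=0$ for $i>g$. Equivalently, one may invoke the adjunction inequality for $HF^{+}$: an embedded surface $\hat F$ of genus $g\ge 1$ forces $HF^{+}(Y,\mathfrak{s})=0$ whenever $|\langle c_{1}(\mathfrak{s}),[\hat F]\rangle|>2g-2$, which translates into the same vanishing through the surgery identification. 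Either route gives $\deg\widehat{\operatorname {HFK}}(S^{3},K)\le g$.

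The content of the theorem is the reverse inequality, that is the non-vanishing $\widehat{\operatorname {HFK}}(S^{3},K,g)\neq 0$, which by the surgery identification is equivalent to $HF^{+}(Y,\mathfrak{s})\neq 0$ in the single extremal $\mathrm{Spin}^{c}$ structure with $\langle c_{1}(\mathfrak{s}),[\hat F]\rangle=2g-2$. For this I would import three geometric inputs. First, Gabai's theorem that the Seifert genus equals the Thurston norm supplies a taut foliation of the complement of $K$ having $F$ as a leaf, and hence a taut foliation of $Y$ with $\hat F$ as a compact leaf. Second, by the Eliashberg--Thurston perturbation theorem this foliation deforms to a tight contact structure $\xi$ whose $\mathrm{Spin}^{c}$ structure is pinned, by the genus of the leaf, to the extremal one. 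Third, the Ozsv\'ath--Szab\'o contact invariant $c(\xi)\in\widehat{\operatorname {HF}}(-Y,\mathfrak{s}_{\xi})$ of such a fillable contact structure is non-zero, which forces the corresponding Floer group, and therefore the top knot Floer group $\widehat{\operatorname {HFK}}(S^{3},K,g)$, to be non-trivial.

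I expect the main obstacle to be precisely this last, lower-bound step. The upper bound is cheap and visibly local: it is controlled by a single adapted Heegaard diagram through the difference formula above and the Maslov/area bookkeeping of Theorem \ref{lip}. The non-vanishing at grading $g$, by contrast, cannot be detected from any one diagram; it genuinely requires the taut-foliation technology together with the symplectic-filling and contact-invariant non-vanishing results. A secondary but essential difficulty is making the identification between the top knot Floer group and $HF^{+}(Y,\mathfrak{s})$ fully precise, including the grading shifts and the passage between the hat and plus flavors, since the geometric non-vanishing must be transported accurately across this identification.
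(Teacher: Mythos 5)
The paper does not actually prove Theorem~\ref{seifert}: it is quoted with citation from Ozsv\'ath--Szab\'o \cite{MR2023281}, and your proposal is essentially a faithful reconstruction of the proof given in that reference. Your two-step plan --- the adjunction/adapted-diagram argument for the upper bound, and for the lower bound Gabai's taut foliation, the Eliashberg--Thurston perturbation to a weakly (semi-)fillable contact structure, and the resulting non-vanishing of the Floer group (equivalently of the contact invariant) in the extremal ${\rm Spin}^{c}$ structure, transported through the identification of the top Alexander level with $HF^{+}(S^{3}_{0}(K),\mathfrak{s}_{g-1})$ --- is exactly the architecture of that argument, with the genuinely hard step (the foliation-to-contact-to-nonvanishing chain) correctly identified as such.
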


There are exact sequences associated with the knot Floer homologies of links in $S^{3}$ (refer to \cite{MR2065507}), which can be regarded as extensions of the skein relation of the Alexander-Conway polynomial. We recall the one to be used in Section 4. Let $L_{+}\subset S^{3}$ be a link, and $p$ be a positive crossing of some projection of $L_{+}$. There are two associated links, $L_{0}$ and $L_{-}$, which agree with $L_{+}$ except at the crossing $p$ (see Figure \ref{fig:f5}). If the two strands projecting to $p$ belong to the same component of $L_{+}$, the skein exact sequence reads:
\begin{equation}
\cdots\longrightarrow \widehat{\operatorname {HFK}}(L_{-})\longrightarrow \widehat{\operatorname {HFK}}(L_{0})\longrightarrow \widehat{\operatorname {HFK}}(L_{+})\longrightarrow\cdots,
\end{equation}
where all the maps above respect the splitting of $\widehat{\operatorname {HFK}}(L)$ under the Alexander grading, and the maps to and from $\widehat{\operatorname {HFK}}(L_{0})$ drop the Maslov grading by ${1}/{2}$. The map from $\widehat{\operatorname {HFK}}(L_{+})$ to $\widehat{\operatorname {HFK}}(L_{-})$ does not necessarily respect the Maslov grading, but it can be expressed as a sum of homogeneous maps, none of which increases the Maslov grading.

\begin{figure}
\setlength{\abovecaptionskip}{-0.4cm}
	\centering
		\includegraphics[width=0.8\textwidth]{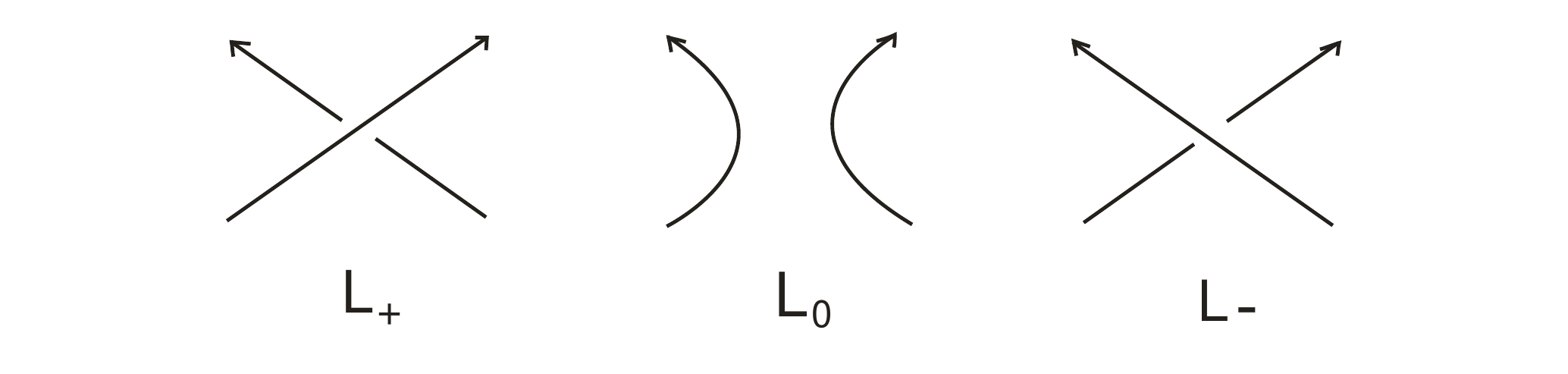}
	\caption{Local diagrams of $L_{+}$, $L_{0}$ and $L_{-}$ around the point $p$.}	
	\label{fig:f5}
\end{figure}

\subsection{Heegaard diagrams for the satellite knots $\{K_{t}^{P_{r}}\}_{r\geq 0}$}
In this subsection, we introduce Heegaard diagrams for the satellite knots $(S^{3}, K_{t}^{P_{r}})$ for $t\neq 0$. We remark that the construction works for general satellite knots. The idea is included in Section 2 of \cite{MR2372849} and in \cite{MR2171814}.

Suppose $K\subset S^{3}$ is an oriented knot and $P\subset \textbf{S}$ is a non-trivially embedded simple closed curve in the standard solid torus $\textbf{S}\subset S^{3}$. Recall that $K_{t}^{P}$ denotes the $t$-twisted satellite knot for which the companion is $K$ and the pattern is $P$. Then there exists a decomposition of $S^{3}-{\rm Int}N(K_{t}^{P})$ along the torus $\partial \textbf{S}$. Precisely it is $$S^{3}-{\rm Int}N(K_{t}^{P})=(S^{3}-{\rm Int}N(K))\bigcup_{\partial N(K)=\partial \textbf{S}} (\textbf{S}-{\rm Int}N(P)).$$ Translating into the language of Heegaard diagrams, one can create a Heegaard diagram for the knot $(S^{3}, K_{t}^{P})$ by combining a Heegaard diagram for the knot $(S^{3},K)$ with a Heegaard diagram for the knot $(S^{3},P)$. Using this idea, we make a doubly-pointed Heegaard diagram for the satellite knot $(S^{3}, K_{t}^{P_{r}})$ for each $r\geq 0$. 

We first construct a Heegaard diagram for $(S^{3}, P_{r})$ (see Figure \ref{fig:f2}). When $r=0$, that is, the case of the Whitehead double, the construction is shown in \cite{MR2372849}. In general, let 
\begin{equation*} 
\cfrac{p_{r}}{q_{r}}= 2+\cfrac{1}{2+\cfrac{1}{2+\cdots+\cfrac{1}{2}}},
\end{equation*} 
where the number 2 appears $2r+1$ times. Remember $(B^{3}, S_{r})$ is a rational tangle. There always exists an embedded disk in $B^{3}$ splitting the two strings of the tangle $S_{r}$. The rational number ${p_{r}}/{q_{r}}$ gives rise to a way to find the splitting disk (up to isotopy). 

Precisely, we describe the process of drawing the boundary of the splitting disk in $S^{2}=\partial B^{3}$. 
\begin{enumerate}
	\item Fix the endpoints $A, B, C$ and $D$ of the tangle $S_{r}$ in a great circle of $S^{2}$ as in Figure \ref{ff2}.
	\item Along the great circle, choose $p_{r}$ disjoint points in both the segments $\overrightarrow{AB}$ and $\overrightarrow{CD}$, and choose $q_{r}$ disjoint points in both the segments $\overrightarrow{BC}$ and $\overrightarrow{DA}$. The clockwise orientation is used here.
	\item Take the line through $A$ and $C$ as the axis of symmetry, and connect the chosen points in $\overrightarrow{ABC}$ with the chosen points in $\overrightarrow{ADC}$ by using $p_{r}+q_{r}$ pairwise disjoint simple arcs in the front hemi-sphere of $S^{2}=\partial B^{3}$.
		\item Then take the line through $B$ and $D$ as the axis of symmetry, and connect the chosen points in $\overrightarrow{BAD}$ to the chosen points in $\overrightarrow{BCD}$ by using pairwise disjoint simple arcs in $S^{2}$ just as we did before, but this time, the back hemi-sphere is used.
	\item All the arcs form a simple closed curve in $S^{2}$, denoted $\beta_{r}$. Then $\beta_{r}$ bounds a splitting disk for the tangle $S_{r}$.
	
\end{enumerate}
\begin{figure}
	\centering
		\includegraphics[width=0.8\textwidth]{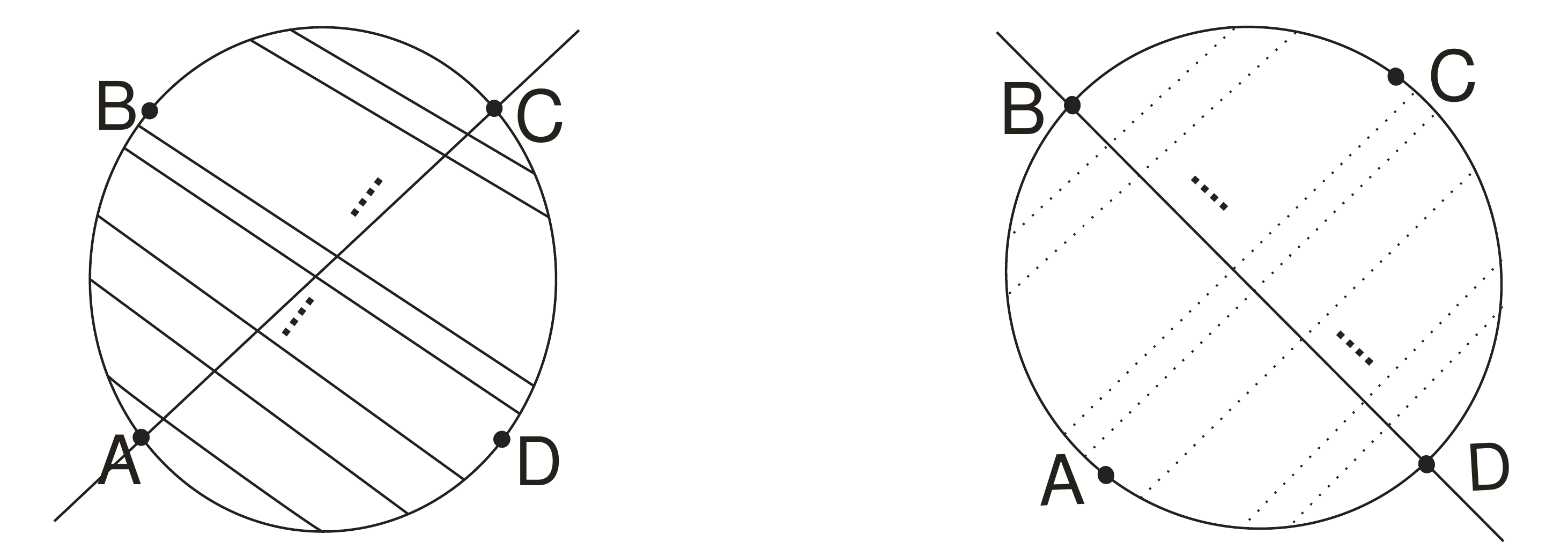}
		\setlength{\abovecaptionskip}{0cm}
	\caption{Connecting chosen points by arcs in the front hemisphere and the back hemisphere.}
	\label{ff2}	
\end{figure}

We attach an unknotted one-handle $h_{AD}$ to $B^{3}$ along the feet $A$ and $D$, and an unknotted one-handle $h_{BC}$ along the feet $B$ and $C$, as shown in Figure \ref{fig:f2}. The resulting manifold is a genus two handlebody, and its boundary is denoted $\Sigma_{2}$. Besides $\beta_{r}$, we define four new curves $\mu, \alpha, \mu_{P_{r}}$ and $\lambda_{P_{r}}$ in $\Sigma_{2}$. The curve $\mu$ is a cocore of the handle $h_{BC}$. The curve $\alpha$ goes along the one-handles $h_{AD}$ and $h_{BC}$ so that attaching a 2-handle to the handlebody along $\alpha$ leaves us a solid torus, which is $\textbf{S}$. The pair $(\mu_{P_{r}}, \lambda_{P_{r}})$ is the preferred meridian-longitude system of $\textbf{S}$, and $\lambda_{P_{r}}\cap \mu_{P_{r}}=\{\theta'\}$. 

We claim that 
$$\operatorname{HD}(S^{3}, P_{r})=(\Sigma_{2}, \{\alpha, \lambda_{P_{r}}\}, \{\beta_{r}, \mu\}, z, w)$$
is a Heegaard diagram for $(S^{3}, P_{r})$. The chain complex defined on it is denoted $\widehat{\operatorname{CFK}}(S^{3}, P_{r})$, and its homology is the knot Floer homology of $C(2r)$. Notice that the intersection $\beta_{r}\cap \lambda_{P_{r}}$ contains $q_{r}$ points, labelled $y_{1}, \cdots, y_{q_{r}}$ from right to left. We can get a Heegaard diagram for $(S^{1}\times S^{2}, P_{r})$ by changing the curve $\lambda_{P_{r}}$ into the curve $\mu_{P_{r}}$ in the Heegaard diagram $\operatorname{HD}(S^{3}, P_{r})$. Precisely, it is
$$\operatorname{HD}(S^{1}\times S^{2}, P_{r})=(\Sigma_{2}, \{\alpha, \mu_{P_{r}}\}, \{\beta_{r}, \mu\}, z, w).$$ The chain complex defined on this Heegaard diagram is denoted $\widehat{\operatorname{CFK}}(S^{1}\times S^{2}, P_{r})$, and its homology is the knot Floer homology of the link $C(2r+1)$ (see \cite{MR2065507} for the knot Floer homology of a link). The intersection $\mu_{P_{r}}\cap\beta_{r}$ contains $2p_{r}$ points, labelled $a_{1}, a_{-1}, \cdots, a_{p_{r}}, a_{-p_{r}}$, as shown in the right-hand figure of Figure \ref{fig:f2}, from bottom to top.

\begin{figure}[h]
	\setlength{\abovecaptionskip}{-0.2cm}
	\centering
		\includegraphics[width=1.0\textwidth]{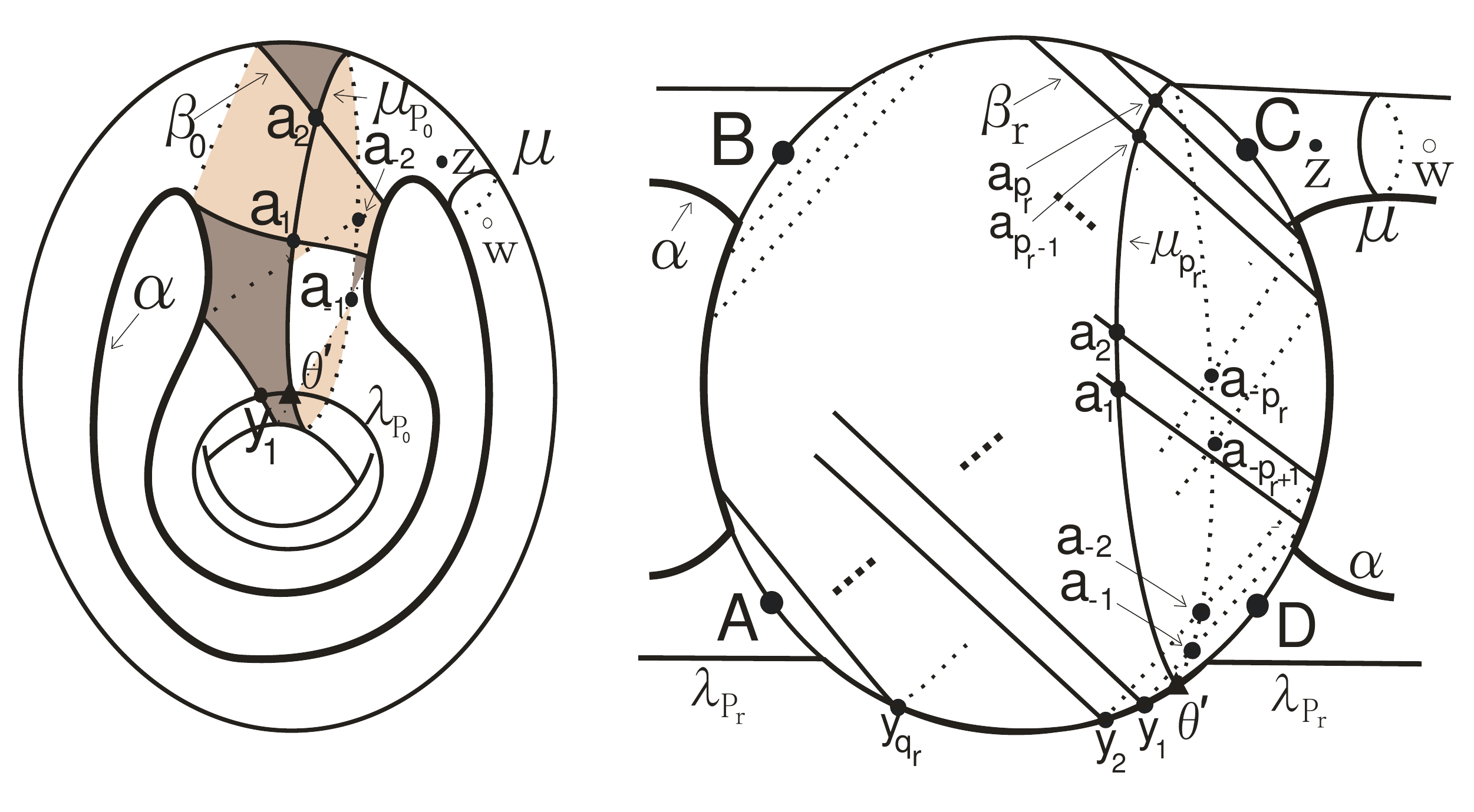}
	\caption{(a) The left-hand figure shows the Heegaard diagrams $\operatorname{HD}(S^{3}, P_{0})$ and $\operatorname{HD}(S^{1}\times S^{2}, P_{0})$, while the right-hand figure is for $\operatorname{HD}(S^{3}, P_{r})$ and $\operatorname{HD}(S^{1}\times S^{2}, P_{r})$ in general case. (b) The shadowed domain on the left-hand side is the generator ${\cal Q}_{0}$ of the periodic domains in $\operatorname{HD}(S^{1}\times S^{2}, P_{0})$.}
	\label{fig:f2}	
\end{figure}

We pause for a while to look at the existence of periodic domains in each diagram. First, we review the definition of a periodic domain. For a Heegaard diagram $(\Sigma_{g}, \boldsymbol{\alpha}, \boldsymbol{\beta}, z, w)$, let $D_{1},\ldots, D_{m}$ denote the closures of the components of $\Sigma_{g}-\boldsymbol{\alpha}-\boldsymbol{\beta}$. 
\begin{defn}
\rm
{\it A periodic domain} is a two-chain ${\cal P}=\sum_{i=1}^{m}a_{i}\cdot D_{i}$ for which the boundary is a sum of $\boldsymbol{\alpha}$ and $\boldsymbol{\beta}$ curves and the local multiplicity $n_{w}({\cal P})$ is zero.
\end{defn}

It is easy to check that the set of periodic domains is a group, and it is isomorphic to $H^{1}(M, {\mathbb Z})$ (refer to \cite{MR2113019}). When the manifold is $S^{3}$, there will be no periodic domains in any Heegaard diagram associated with $S^{3}$ since $H^{1}(S^{3}, {\mathbb Z})=0$. However, the Heegaard diagram $\operatorname{HD}(S^{1}\times S^{2}, P_{r})$ is a diagram for $S^{1}\times S^{2}$, and therefore contains periodic domains since $H^{1}(S^{1}\times S^{2}, {\mathbb Z})={\mathbb Z}$. 

\begin{lemma}
\label{periodic}
Let ${\cal Q}_{r}$ be a generator of the group of periodic domains in $\operatorname{HD}(S^{1}\times S^{2}, P_{r})$. Then
$$n_{z}({\cal Q}_{r})=n_{w}({\cal Q}_{r})=0,$$
for any $r\in {\mathbb Z}_{\geq 0}$.
\end{lemma}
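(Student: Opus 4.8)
The plan is to understand the periodic domains of $\operatorname{HD}(S^{1}\times S^{2}, P_{r})$ concretely and then read off the local multiplicities at $z$ and $w$. Since the set of periodic domains is a group isomorphic to $H^{1}(S^{1}\times S^{2}, {\mathbb Z})={\mathbb Z}$, there is (up to sign) a unique generator ${\cal Q}_{r}$, so it suffices to analyze a single two-chain. First I would recall that the only difference between $\operatorname{HD}(S^{3}, P_{r})$ and $\operatorname{HD}(S^{1}\times S^{2}, P_{r})$ is that the curve $\lambda_{P_{r}}$ has been replaced by $\mu_{P_{r}}$; the curves $\alpha$, $\beta_{r}$ and $\mu$ are common to both. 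Because $\operatorname{HD}(S^{3}, P_{r})$ is a diagram for $S^{3}$ it carries no periodic domains, so the nontrivial periodic domain in the $S^{1}\times S^{2}$ diagram must have boundary that genuinely involves the new curve $\mu_{P_{r}}$ (the $\alpha$ and $\lambda_{P_{r}}$ curves alone would bound a relation forcing a periodic domain in the $S^{3}$ diagram, a contradiction). I therefore expect $\partial {\cal Q}_{r}$ to be, up to sign and up to adding copies of $\mu$, exactly the curve $\mu_{P_{r}}$ itself together with some multiple of $\alpha$.

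The key structural step is to pin down $\partial {\cal Q}_{r} = c_{1}\,\mu_{P_{r}} + c_{2}\,\mu + c_{3}\,\alpha$ with $c_{i}\in{\mathbb Z}$ and then determine the two-chain. The defining condition $n_{w}({\cal Q}_{r})=0$ is built into the definition of a periodic domain, so the content of the lemma is the single equation $n_{z}({\cal Q}_{r})=0$. I would combine this with the grading formula already recorded in the excerpt, $A(x)-A(y)=n_{z}({\cal D})-n_{w}({\cal D})$: since ${\cal Q}_{r}$ is a periodic domain it connects a generator to itself, so the Alexander grading difference is zero, giving $n_{z}({\cal Q}_{r})=n_{w}({\cal Q}_{r})$. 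Combined with $n_{w}({\cal Q}_{r})=0$ this immediately yields $n_{z}({\cal Q}_{r})=0$. The point $z$ and $w$ together encode the knot $P_{r}$ in the handlebody, and a periodic domain, having boundary a sum of full $\boldsymbol{\alpha}$ and $\boldsymbol{\beta}$ curves, connects each generator to itself; this is exactly the bookkeeping that forces $n_{z}=n_{w}$.

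The honest obstacle, and the step I would actually carry out in detail, is confirming that $n_{w}({\cal Q}_{r})=0$ is genuinely compatible with a \emph{nontrivial} periodic domain — that is, that one cannot secretly be forced to take $n_{z}\neq 0$ by the combinatorics of $\beta_{r}$, whose intersections with $\mu_{P_{r}}$ number $2p_{r}$ and grow with $r$. To handle this I would exhibit ${\cal Q}_{r}$ explicitly by tracking the regions swept out as one reads the closed curve $\beta_{r}$ against $\mu_{P_{r}}$ and $\alpha$, using the base case $r=0$ (the domain ${\cal Q}_{0}$ is already displayed in Figure \ref{fig:f2}(b)) and the self-similar way $\beta_{r}$ is built from the rational-tangle data $p_{r}/q_{r}$. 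The main thing to verify is that the region containing $w$ receives coefficient zero by construction, and that $z$ lies in the \emph{same} region of $\Sigma_{2}-\boldsymbol{\alpha}-\boldsymbol{\beta}$ as $w$, or in a region forced to share its multiplicity; this placement of $z$ and $w$ relative to $\beta_{r}$ is the one geometric fact that requires care, and once it is established the Alexander-grading argument above closes the lemma uniformly in $r$.
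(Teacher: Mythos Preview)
Your central move---applying the Alexander grading identity $A(x)-A(y)=n_{z}({\cal D})-n_{w}({\cal D})$ to the periodic domain ${\cal Q}_{r}$ to force $n_{z}({\cal Q}_{r})=n_{w}({\cal Q}_{r})$---has a genuine gap. The paper records that formula only for knots in $S^{3}$, where there are no periodic domains and the issue is vacuous. In $S^{1}\times S^{2}$ the well\nobreakdash-definedness of the Alexander grading is \emph{equivalent} to the assertion that $n_{z}(P)=n_{w}(P)$ for every periodic domain $P$; quoting the formula as a black box is therefore circular. The rescue is available but you never state it: $P_{r}$ is null-homologous in $S^{1}\times S^{2}$ (already $[P_{r}]=0$ in $H_{1}(\textbf{S})$ since $\xi=0$), and for null-homologous knots the general Ozsv\'ath--Szab\'o theory supplies a well-defined filtration satisfying the formula. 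Without that hypothesis the argument does not stand. A secondary error: your ansatz $\partial{\cal Q}_{r}=c_{1}\mu_{P_{r}}+c_{2}\mu+c_{3}\alpha$ omits $\beta_{r}$, whereas in fact $\partial{\cal Q}_{r}=\beta_{r}-\mu_{P_{r}}$; your reasoning only forces $\mu_{P_{r}}$ to appear and says nothing about the $\boldsymbol{\beta}$-curves, so the explicit fallback of your third paragraph starts from the wrong shape.

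The paper's proof avoids the filtration theory entirely and is hands-on: induction on $r$, with ${\cal Q}_{0}$ read off from Figure~\ref{fig:f2}(b). For the inductive step one takes $f\in\operatorname{MCG}(S^{2},\{A,B,C,D\})$ carrying $\beta_{r}$ to $\beta_{r+1}$, sets ${\cal Q}_{r+1}=f({\cal Q}_{r})+{\cal Q}_{r+1,2}$ where ${\cal Q}_{r+1,2}$ is the region between $f(\mu_{P_{r}})$ and $\mu_{P_{r+1}}$, and observes that every step takes place in $S^{2}\setminus\{A,B,C,D\}$, away from the handles carrying $z$ and $w$. If you patch your argument with the null-homologous observation it becomes a legitimate, shorter alternative that trades this explicit geometry for an appeal to general theory.
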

\begin{proof}
We consider the mapping class group of the sphere $S^{2}$ which fixes the endpoints $\{A, B, C, D\}$ pointwisely, and denote it $\operatorname{MCG}(S^{2}, \{A, B, C, D\})$. The lemma is proved by induction on $r$.
When $r=0$, a generator ${\cal Q}_{0}$ for the periodic domains in $\operatorname{HD}(S^{1}\times S^{2}, P_{0})$ is shown in Figure \ref{fig:f2}. We see $\partial{\cal Q}_{0}=\beta_{0}-\mu_{P_{0}}$ and $n_{z}({\cal Q}_{o})=n_{w}({\cal Q}_{0})=0$. 

Assume the lemma holds for some $r\geq 0$. We show that it holds for $r+1$ as well. The change from $(B^{3}, S_{r})$ to $(B^{3}, S_{r+1})$ corresponds to a homeomorphism $f: S^{2} \rightarrow S^{2}$ induced by two full-twists on $S^{2}$. It is easy to see that $f \in \operatorname{MCG}(S^{2}, \{A, B, C, D\})$ and $f(\beta_{r})=\beta_{r+1}$. Let ${\cal Q}_{r+1, 1}=f({\cal Q}_{r})$ and $\mu_{P_{r+1}}'=f(\mu_{P_{r}})$. Notice that there exists an oriented domain bounded by $\mu_{P_{r+1}}'$ and $\mu_{P_{r+1}}$ in $S^{2}-\{A, B, C, D\}$, which we denote ${\cal Q}_{r+1, 2}$. See Figure \ref{fig:f9} for the description. Let ${\cal Q}_{r+1}={\cal Q}_{r+1, 1}+{\cal Q}_{r+1, 2}$. Then ${\cal Q}_{r+1}$ is a generator of the space of periodic domains in $\operatorname{HD}(S^{1}\times S^{2}, P_{r+1})$, and $\partial{\cal Q}_{r+1}=\beta_{r+1}-\mu_{P_{r+1}}$. Since every step happens inside $S^{2}-\{A, B, C, D\}$, it is obvious that $n_{z}({\cal Q}_{r+1})=n_{w}({\cal Q}_{r+1})=0$. The lemma is proved by induction.
\end{proof}

\begin{figure}
	\setlength{\abovecaptionskip}{-0.2cm}
	\centering
		\includegraphics[width=0.8\textwidth]{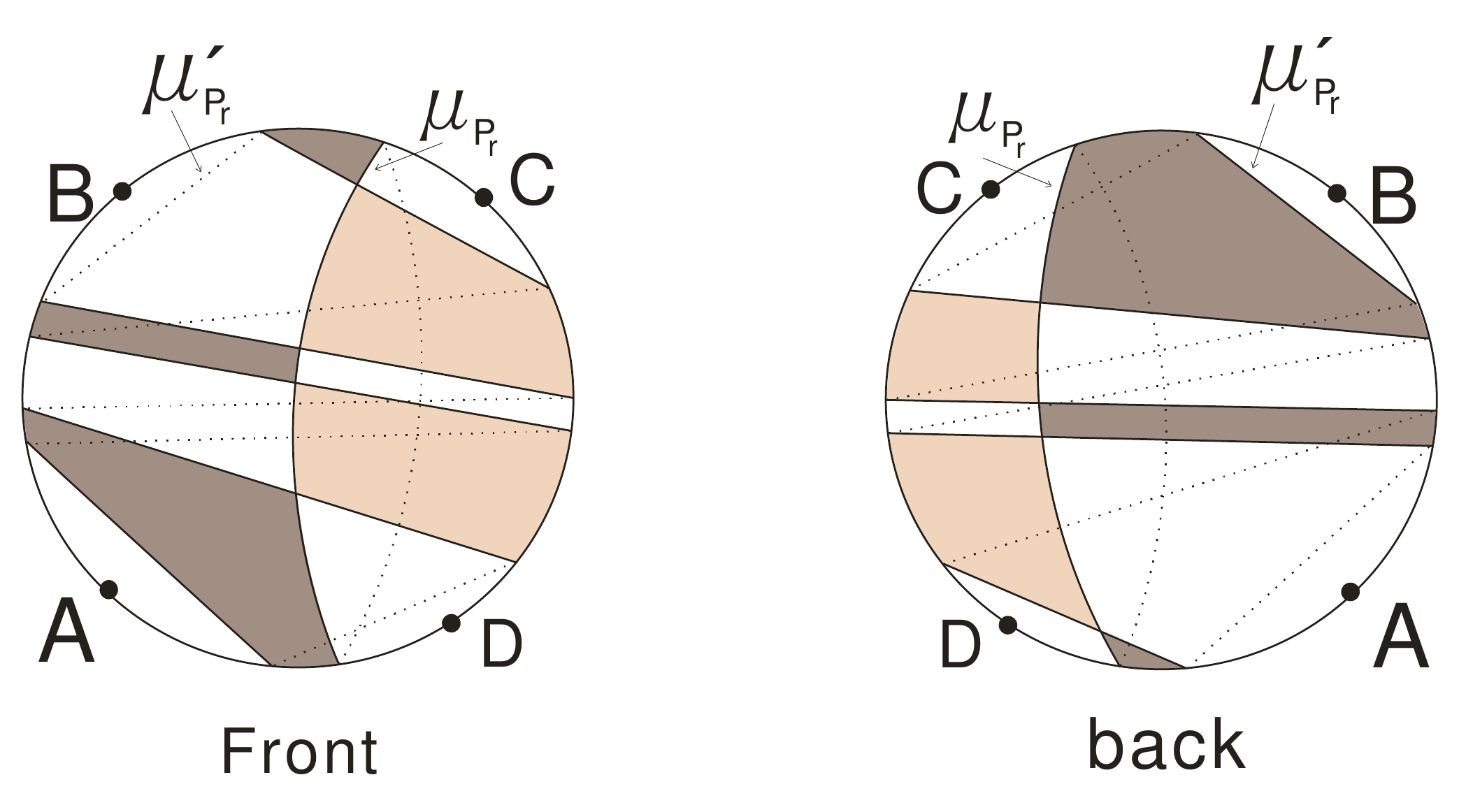}
	\caption{The front and the back sides of the domain ${\cal Q}_{r+1, 2}$ bounded by $\mu_{P_{r}}'$ and $\mu_{P_{r}}$ in $S^{2}$ for any $r\in {\mathbb Z}_{> 0}$.}
	\label{fig:f9}	
\end{figure}

Suppose the Heegaard diagram 
$$\operatorname{HD}(S^{3}, K)=(\Sigma_{g}, \{\alpha_{1}, \alpha_{2}, \cdots, \alpha_{g-1}, \mu_{K}\}, \{\beta_{1}, \beta_{2}, \cdots, \beta_{g}\}, z', w')$$
for the companion knot $K$ is constructed from one of its projections (see \cite{MR1988285} for construction). Here $\mu_{K}$ is a meridian of $K$, and $\mu_{K}\cap (\bigcup_{i=1}^{g}\beta_{i}) = \mu_{K}\cap \beta_{1}= \{x_{0}\}$. See the left-hand picture of Figure \ref{fig:f6} for the diagram around the meridian $\mu_{K}$. Let $\widehat{\operatorname{CFK}}(S^{3}, K)$ be the complex defined on $\operatorname{HD}(S^{3}, K)$. Its homology is the knot Floer homology of $K$. In this Heegaard diagram, one can draw a longitude $\lambda_{K}\subset \Sigma_{g}$ of $K$ with framing $t$ such that $\lambda_{K}\cap (\bigcup_{i=1}^{g-1}\alpha_{i})=\emptyset$. Here the framing of $\lambda_{K}$ is $t$ means that the linking number ${\rm lk}(\lambda_{K}, K)$ is $t$ in $S^{3}$. The longitude $\lambda_{K}$ can be arranged to have one intersection point with $\mu_{K}$. That is, $\lambda_{K}\cap \mu_{K}=\{\theta\}$.

Let $S_{t}^{3}(K)$ be the 3-manifold obtained by $t$-surgery of $S^{3}$ along $K$. Notice that $S_{t}^{3}(K)$ is a rational homology sphere when $t\neq 0$. We assume $t\neq 0$ except in Section 4. If we regard $\mu_{K}$ as a rationally null-homologous simple closed curve in $S_{t}^{3}(K)$, a Heegaard diagram for $(S_{t}^{3}(K), \mu_{K})$ is obtained by replacing $\mu_{K}$ with $\lambda_{K}$ in $\operatorname{HD}(S^{3}, K)$. Specifically, it is
$$\operatorname{HD}(S_{t}^{3}(K), \mu_{K})=(\Sigma_{g}, \{\alpha_{1}, \alpha_{2}, \cdots, \alpha_{g-1}, \lambda_{K}\}, \{\beta_{1}, \beta_{2}, \cdots, \beta_{g}\}, z'', w').$$ Here the placement of the point $z''$ is shown in Figure \ref{fig:f6}. Let $\widehat{\operatorname {CFK}}(S_{t}^{3}(K), \mu_{K})$ denote the complex defined on $\operatorname{HD}(S_{t}^{3}(K), \mu_{K})$. Its homology is $\widehat{\operatorname {HFK}}(S_{t}^{3}(K), \mu_{K})$. See \cite{rational} for the knot Floer homology theory for rationally null-homologous curves. 

\begin{figure}[h]
	\centering
		\includegraphics[width=1.0\textwidth]{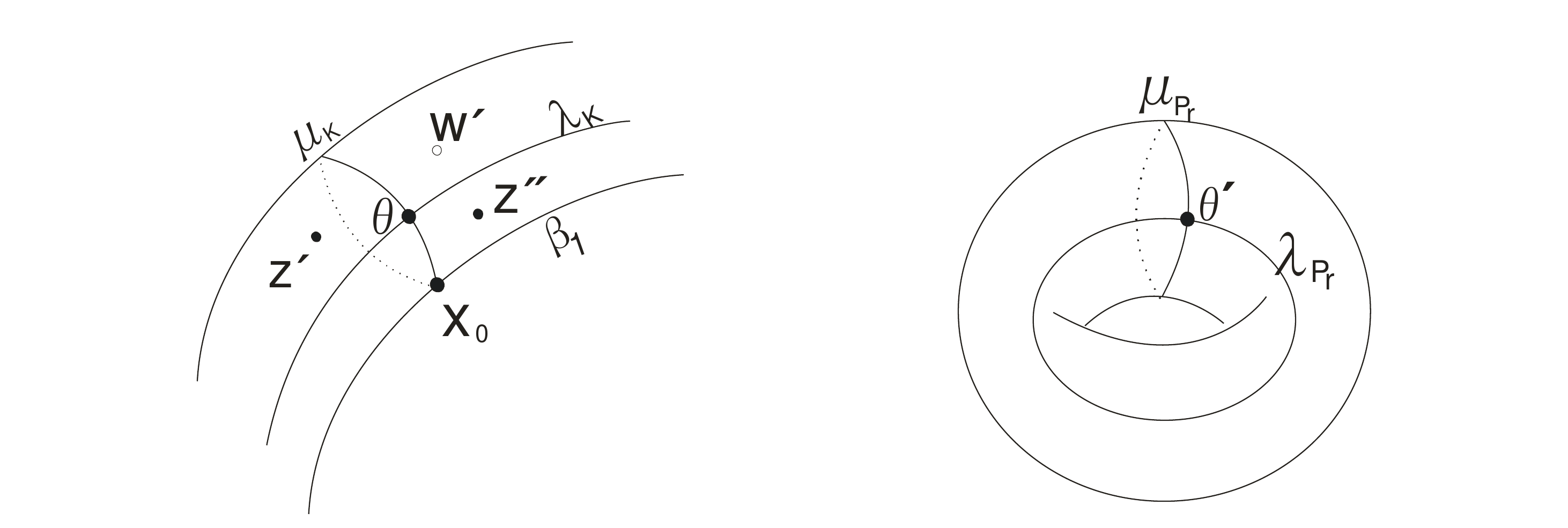}
	\caption{The left-hand figure is a local picture of both $\operatorname{HD}(S^{3}, K)$ and $\operatorname{HD}(S_{t}^{3}(K), \mu_{K})$. The right-hand figure highlights the meridian-longitude system $(\lambda_{P_{r}}, \mu_{P_{r}})$ of the solid torus $\textbf{S}$.}
	\label{fig:f6}	
\end{figure}

As a result, a Heegaard diagram for $K_{t}^{P_{r}}$ is described as follows:
\begin{multline*}
\operatorname{HD}(S^{3}, K_{t}^{P_{r}})=(\Sigma_{g+2}, \boldsymbol{\alpha}=\{\alpha, \alpha_{1}, \alpha_{2}, \cdots, \alpha_{g-1}, \lambda_{P_{r}}\sharp\lambda_{K}, \mu_{P_{r}}\sharp\mu_{K}\},\\
 \boldsymbol{\beta}=\{\beta_{1}, \beta_{2}, \cdots, \beta_{g}, \beta_{r}, \mu\}, z, w).
\end{multline*}
Here $\Sigma_{g+2}$ is the connected sum of $\Sigma_{g}$ and $\Sigma_{2}$ obtained by attaching a one-handle along the feet $\theta$ and $\theta'$. Precisely, we remove the disk neighborhoods of the points $\theta$ and $\theta'$, and attach a one-handle along the boundaries of disks such that the resulting surface is a Heegaard surface of $S^{3}$. The attachment happens inside the tubular neighborhood of $K$. The curve $\lambda_{P_{r}}\sharp\lambda_{K}$ ($\mu_{P_{r}}\sharp\mu_{K}$, respectively) is the band sum of $\lambda_{P_{r}}$ and $\lambda_{K}$ ($\mu_{P_{r}}$ and $\mu_{K}$, respectively) along the one-handle described above (see Figure~\ref{fig:f14} for a detailed illustration). The chain complex defined on $\operatorname{HD}(S^{3}, K_{t}^{P_{r}})$ is denoted $\widehat{\operatorname{CFK}}(S^{3}, K_{t}^{P_{r}})$. Its homology, therefore, is the knot Floer homology of $K_{t}^{P_{r}}$.
 
Attaching a two-handle to the Heegaard surface $\Sigma_{g+2}$ along the band sum of two closed curves is equivalent to identifying these two curves. In the attachment $S^{3}-{\rm Int}N(K_{t}^{P_{r}})=(S^{3}-{\rm Int}N(K))\bigcup_{\partial N(K)=\partial \textbf{S}} (\textbf{S}-{\rm Int}N(P_{r}))$, the meridian $\mu_{K}\subset \partial N(K)$ is identified with the meridian $\mu_{P_{r}}\subset\partial \textbf{S}$, and the longitude $\lambda_{K}\subset \partial N(K)$ is identified with the longitude $\lambda_{P_{r}}\subset\partial \textbf{S}$. Therefore we do the band sums $\lambda_{P_{r}}\sharp\lambda_{K}$ and $\mu_{P_{r}}\sharp\mu_{K}$. One can check that the Heegaard diagram $\operatorname{HD}(S^{3}, K_{t}^{P_{r}})$ satisfies the conditions in Definition \ref{heegaard}.

Here we assume that the orientation of $\Sigma_{g+2}$ is inherited from $\Sigma_{g}$. Explicitly, the local orientation of $\Sigma_{g+2}$ is shown in Figure \ref{fig:f14}.

\begin{figure}
  \setlength{\abovecaptionskip}{-0.2cm}
	\centering
		\includegraphics[width=1.0\textwidth]{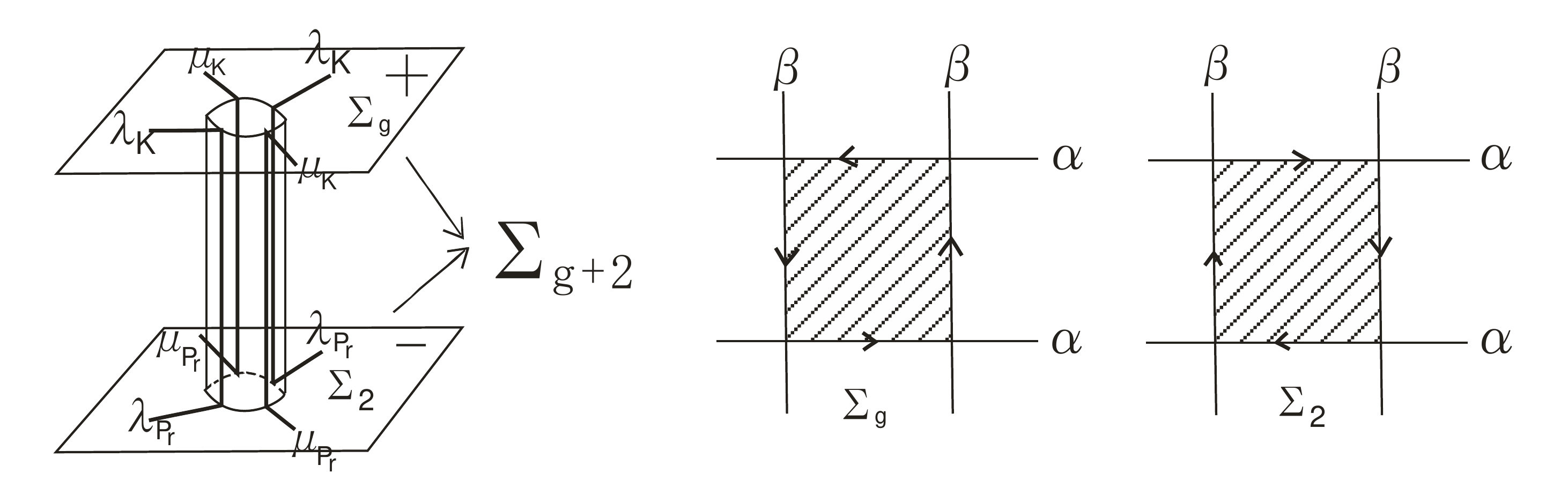}
	\caption{The left-hand figure illustrates the connected sum $\Sigma_{g+2}=\Sigma_{g}\sharp\Sigma_{2}$ along a tube, and the band sums $\lambda_{K}\sharp\lambda_{P_{r}}$ and $\mu_{K}\sharp\mu_{P_{r}}$. The right-hand figure shows the local orientation of each surface.}
	\label{fig:f14}	
\end{figure}

\section{Studying the homology group $\widehat{\operatorname{HFK}}(S^{3}, K_{t}^{P_{r}})$}
In Section 3.1, the Alexander grading of the complex $\widehat{\operatorname{CFK}}(S^{3}, K_{t}^{P_{r}})$ is studied. Then in Section 3.2, restricting to the top Alexander grading, we compute the knot Floer homology $\widehat{\operatorname{HFK}}(S^{3}, K_{t}^{P_{r}})$ for sufficiently large $|t|$, and prove Theorem \ref{main1}. The study here depends heavily on the ideas from Hedden in \cite{MR2372849}.

\subsection{Relation of $\widehat{\operatorname{HFK}}(S^{3}, K_{t}^{P_{r}}, r+1)$ and $\widehat{\operatorname{HFK}}(S_{t}^{3}(K), \mu_{K})$ for $t\neq 0$}
First, the generators of the complex $\widehat{\operatorname {CFK}}(S^{3}, K_{t}^{P_{r}})$ are assorted into two classes. In order to get more accurate classification, the Alexander gradings of some generators are then studied in many aspects. As a result, we find that the top Alexander grading of $\widehat{\operatorname {CFK}}(S^{3}, K_{t}^{P_{r}})$ is $r+1$, which only depends on the pattern $P_{r}$. Moreover, staying at the top grading level, we obtain a parallel result to that of Section 3 in \cite{MR2372849}. That is, there is a natural identification of the chain complexes
$$\widehat{\operatorname {CFK}}(S^{3}, K_{t}^{P_{r}}, r+1)=\bigoplus_{s_{i}\in {\rm Spin}^{c}(S_{t}^{3}(K))}\widehat{\operatorname {CFK}}(S_{t}^{3}(K), \mu_{K}, s_{i}).$$
On the right side, the notation ${\rm Spin}^{c}(S_{t}^{3}(K))$ denotes the set of ${\rm Spin}^{c}$-structures in $S_{t}^{3}(K)$.

Precisely, we claim that the generators of the chain complex $\widehat{\operatorname {CFK}}(S^{3}, K_{t}^{P_{r}})$ are split into two classes of the forms
\begin{enumerate}
	\item $\{x,y_{i}\}\times \textbf{p}\in \widehat{\operatorname {CFK}}(S^{3}, P_{r})\times \widehat{\operatorname {CFK}}(S^{3}, K)$, $i=1,2,\cdots,q_{r}$,
	\item $\{x, a_{j}\}\times \textbf{q}\in \widehat{\operatorname {CFK}}(S^{1}\times S^{2}, P_{r})\times \widehat{\operatorname {CFK}}(S_{t}^{3}(K), \mu_{K})$, $j=\pm1, \pm2, \cdots, \pm p_{r}$.
\end{enumerate}

The claim above is based on the following argument. Recall each generator corresponds to a $(g+2)$-tuple of intersection points between $\boldsymbol{\alpha}$ curves and $\boldsymbol{\beta}$ curves of the Heegaard diagram $\operatorname{HD}(S^{3}, K_{t}^{P_{r}})$. In this Heegaard diagram, we first choose the intersection point for the curves $\mu$ and $\beta_{r}$, which are two $\boldsymbol{\beta}$ curves of $\operatorname{HD}(S^{3}, K_{t}^{P_{r}})$. Notice that the intersection point $x\in\mu\cap\alpha$ has to be chosen since it is the unique choice for $\mu$. For the curve $\beta_{r}$, we can either choose an intersection point in $\{y_{i}\}_{i=1}^{q_{r}}=\beta_{r}\cap\lambda_{P_{r}}\sharp\lambda_{K}$, which constitute the first class, or an intersetion point in $\{a_{j}\}_{j=\pm 1}^{\pm p_{r}}=\beta_{r}\cap\mu_{P_{r}}\sharp\mu_{K}$, which make up the second class (see Figure \ref{fig:f2} for illustration). The intersection points of the other $\boldsymbol{\alpha}$ and $\boldsymbol{\beta}$ curves can be naturally combined to form generators of $\widehat{\operatorname {CFK}}(S^{3}, K)$ or generators of $\widehat{\operatorname {CFK}}(S_{t}^{3}(K), \mu_{K})$.

We now calculate the Alexander grading differences between generators in $\widehat{\operatorname {CFK}}(S^{3}, K_{t}^{P_{r}})$. First let us compare generators with common restriction to their first factors. The proof uses the same ideas as those in the proofs of \cite[Lemmas 3.2 and 3.3]{MR2372849}. 

\begin{lemma}
\label{case1}
\rm
\begin{enumerate}
	 \item {\it Suppose  $\{x, y_{i}\}\times \textbf{p}$ and $\{x, y_{i}\}\times \textbf{p}'$ are two generators of the complex $ \widehat{\operatorname {CFK}}(S^{3}, P_{r})\times \widehat{\operatorname {CFK}}(S^{3}, K)$ for $1\leq i \leq q_{r}$. Then we have
	\begin{equation}
	\label{first}
	A(\{x, y_{i}\}\times \textbf{p})=A(\{x, y_{i}\}\times \textbf{p}').
	\end{equation}}
	\item {\it Suppose  $\{x, a_{j}\}\times \textbf{q}$ and $\{x, a_{j}\}\times \textbf{q}'$ are two generators of the complex $\widehat{\operatorname {CFK}}(S^{1}\times S^{2}, P_{r})\times \widehat{\operatorname {CFK}}(S_{t}^{3}(K), \mu_{K})$ for $j \in \{\pm 1,\cdots, \pm p_{r}\}$. Then we have
	\begin{equation}
	A(\{x, a_{j}\}\times \textbf{q})=A(\{x, a_{j}\}\times \textbf{q}').
	\end{equation}}
\end{enumerate}

\end{lemma}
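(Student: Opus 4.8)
The plan is to prove both parts through the relative Alexander grading formula recorded before Theorem~\ref{lip}: for any domain $\mathcal{D}$ connecting two generators $u$ and $v$ in $\operatorname{HD}(S^{3}, K_{t}^{P_{r}})$, one has $A(u)-A(v)=n_{z}(\mathcal{D})-n_{w}(\mathcal{D})$. Since $\operatorname{HD}(S^{3}, K_{t}^{P_{r}})$ is a diagram for $S^{3}$, its group of periodic domains is isomorphic to $H^{1}(S^{3},{\mathbb Z})=0$, so this difference is independent of the choice of $\mathcal{D}$; it therefore suffices, in each part, to exhibit one connecting domain with $n_{z}=n_{w}$. The geometric fact I would exploit throughout is that both basepoints $z$ and $w$ lie in the pattern piece $\Sigma_{2}$ of the connected sum $\Sigma_{g+2}=\Sigma_{g}\sharp\Sigma_{2}$, so only the part of a connecting domain lying in $\Sigma_{2}$ can contribute to $n_{z}$ or $n_{w}$.

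For part (i), the two generators share the factor $\{x, y_{i}\}$ and differ only in the companion factors $\mathbf{p}, \mathbf{p}'\in\widehat{\operatorname{CFK}}(S^{3}, K)$. In $\operatorname{HD}(S^{3}, K)$ the meridian $\mu_{K}$ meets the $\boldsymbol{\beta}$ curves only in the single point $x_{0}=\mu_{K}\cap\beta_{1}$, so every generator of $\widehat{\operatorname{CFK}}(S^{3}, K)$ contains $x_{0}$; in particular $\mathbf{p}$ and $\mathbf{p}'$ agree along $\mu_{K}$ and along $\beta_{1}$, and can differ only in their points on $\alpha_{1}, \dots, \alpha_{g-1}$ and $\beta_{2}, \dots, \beta_{g}$. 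I would therefore choose a domain $\mathcal{D}_{0}$ connecting $\mathbf{p}$ to $\mathbf{p}'$ whose boundary is supported on these latter curves, hence disjoint from $\mu_{K}$ and from the connecting tube attached at $\theta$. Viewed inside $\operatorname{HD}(S^{3}, K_{t}^{P_{r}})$, such a $\mathcal{D}_{0}$ is supported entirely in $\Sigma_{g}$, away from $z$ and $w$, so $n_{z}(\mathcal{D}_{0})=n_{w}(\mathcal{D}_{0})=0$ and \eqref{first} follows.

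For part (ii) the companion factors $\mathbf{q}, \mathbf{q}'$ live in $\widehat{\operatorname{CFK}}(S_{t}^{3}(K), \mu_{K})$, whose diagram replaces $\mu_{K}$ by the framing-$t$ longitude $\lambda_{K}$. Because $\lambda_{K}$ meets several of the $\boldsymbol{\beta}$ curves, $\mathbf{q}$ and $\mathbf{q}'$ may differ along $\lambda_{K}$ and may even represent distinct $\operatorname{Spin}^{c}$ structures of $S_{t}^{3}(K)$, so a connecting domain $\mathcal{D}$ in the satellite diagram need no longer avoid $\lambda_{P_{r}}\sharp\lambda_{K}$, which crosses the tube into the pattern piece $\Sigma_{2}$. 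I would split $\mathcal{D}$ into its companion part in $\Sigma_{g}$ and its pattern part in $\Sigma_{2}$. Since the shared factor $\{x, a_{j}\}$ is unchanged, the pattern part has fixed endpoints with boundary running along $\beta_{r}$, $\mu$, $\lambda_{P_{r}}$ and $\mu_{P_{r}}$, and its contribution to $n_{z}-n_{w}$ equals that of an integer multiple of the periodic domain $\mathcal{Q}_{r}$ of $\operatorname{HD}(S^{1}\times S^{2}, P_{r})$. By Lemma~\ref{periodic} we have $n_{z}(\mathcal{Q}_{r})=n_{w}(\mathcal{Q}_{r})=0$, so the pattern part contributes equally, namely zero, to both basepoints, while the companion part in $\Sigma_{g}$ contributes nothing. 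Hence $n_{z}(\mathcal{D})=n_{w}(\mathcal{D})=0$, which gives the stated equality.

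The main obstacle is precisely the bookkeeping in part (ii): one must check that a domain connecting two second-class generators with a common first factor really does decompose, modulo the connecting tube, into a companion piece supported in $\Sigma_{g}$ and a pattern piece that closes up to a multiple of $\mathcal{Q}_{r}$. This is where the band sums along the tube couple the two sides, and where the $\operatorname{Spin}^{c}$-shift coming from windings of $\lambda_{K}$ must be matched against the pattern periodic domain. Once this matching is established, the vanishing $n_{z}(\mathcal{Q}_{r})=n_{w}(\mathcal{Q}_{r})=0$ supplied by Lemma~\ref{periodic} does all the remaining work, which is exactly the role that lemma was prepared to play.
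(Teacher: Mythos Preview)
Your argument for part~(ii) is essentially the paper's: restrict an arbitrary connecting Whitney disk to the pattern surface $\Sigma_{2}$, identify the restriction with a multiple of the periodic domain $\mathcal{Q}_{r}$, and invoke Lemma~\ref{periodic}. The paper in fact uses exactly this argument for part~(i) and then declares part~(ii) similar, so on that half you and the paper agree.

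The gap is in your part~(i). You assert that because $\mathbf{p}$ and $\mathbf{p}'$ share the point $x_{0}\in\mu_{K}\cap\beta_{1}$, one can choose a connecting domain $\mathcal{D}_{0}$ whose boundary is supported only on $\alpha_{1},\dots,\alpha_{g-1}$ and $\beta_{2},\dots,\beta_{g}$, hence lying entirely in $\Sigma_{g}$ and missing $z,w$. This is false in general. In $\operatorname{HD}(S^{3},K)$ the basepoints $z'$ and $w'$ sit in adjacent regions separated by $\mu_{K}$, and for any domain $\mathcal{D}$ connecting $\mathbf{p}$ to $\mathbf{p}'$ one has $n_{z'}(\mathcal{D})-n_{w'}(\mathcal{D})=A(\mathbf{p})-A(\mathbf{p}')$; since $\mu_{K}$ meets the $\boldsymbol{\beta}$-curves only at $x_{0}$, this difference is precisely the multiplicity of $\mu_{K}$ in $\partial\mathcal{D}$. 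So whenever $A(\mathbf{p})\neq A(\mathbf{p}')$ in $\widehat{\operatorname{CFK}}(S^{3},K)$ --- which of course happens --- every connecting domain carries nonzero boundary along $\mu_{K}$, and in the satellite diagram this forces the domain to run along $\mu_{P_{r}}\sharp\mu_{K}$ across the tube into $\Sigma_{2}$. Your proposed $\mathcal{D}_{0}$ does not exist.

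The repair is exactly the mechanism you already set up for~(ii): take any Whitney disk $\phi$ with $n_{w}(\phi)=0$, observe that its restriction to $\Sigma_{2}$ has boundary consisting of full multiples of the pattern curves (since $x$ and $y_{i}$ are fixed), recognise this restriction as a multiple of $\mathcal{Q}_{r}$, and apply Lemma~\ref{periodic} to get $n_{z}(\phi)=0$. That is precisely the paper's proof of~(i). In short, the two parts are not asymmetric in the way your write-up suggests; the periodic-domain argument is needed for both.
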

\begin{proof}
\begin{enumerate}
   \item Suppose $\phi$ is a Whitney disk from $\{x, y_{i}\}\times \textbf{p}$ to $\{x, y_{i}\}\times \textbf{p}'$ with $n_{w}(\phi)=0$. Then the restriction of $\phi$ to $\Sigma_{2}$ must be a periodic domain in $\operatorname{HD}(S^{1}\times S^{2}, P_{r})$, and thus have the form $n\cdot {\cal Q}_{r}$. The first statement of the lemma follows since $n_{z}({\cal Q}_{r})=n_{w}({\cal Q}_{r})=0$ by Lemma \ref{periodic}.
   \item The second statement can be proved by using a similar argument.
	\end{enumerate}
\end{proof}

\begin{figure}
	\centering
		\includegraphics[width=0.7\textwidth]{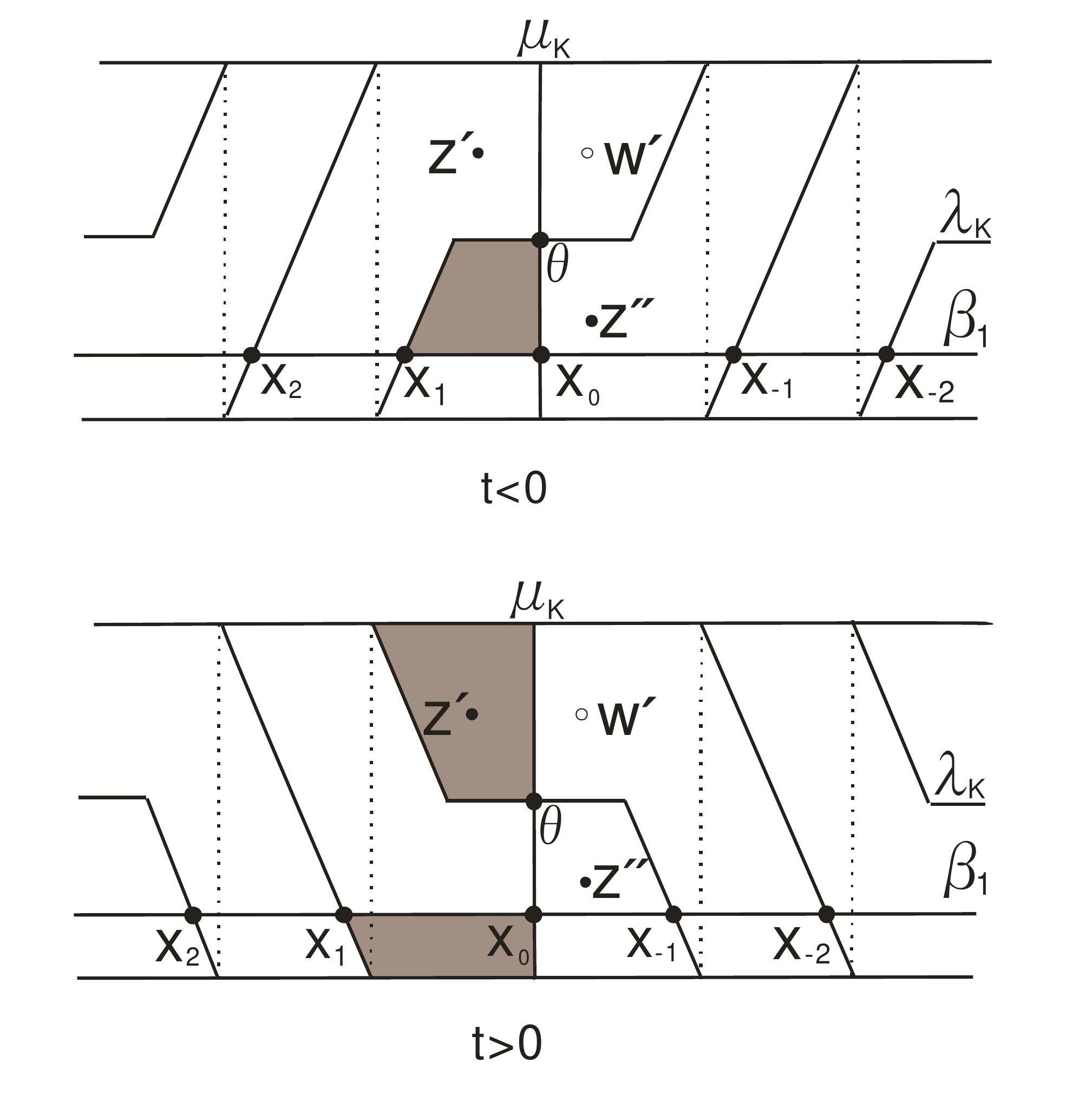}
	\setlength{\abovecaptionskip}{-0.3cm}
	\caption{The triangles $\Delta_{\theta, x_{0}, x_{1}}$ when $t<0$ and $t>0$. }
	\label{fig:f7}	
\end{figure}

\begin{figure}[h]
	\centering
		\includegraphics[width=1.1\textwidth]{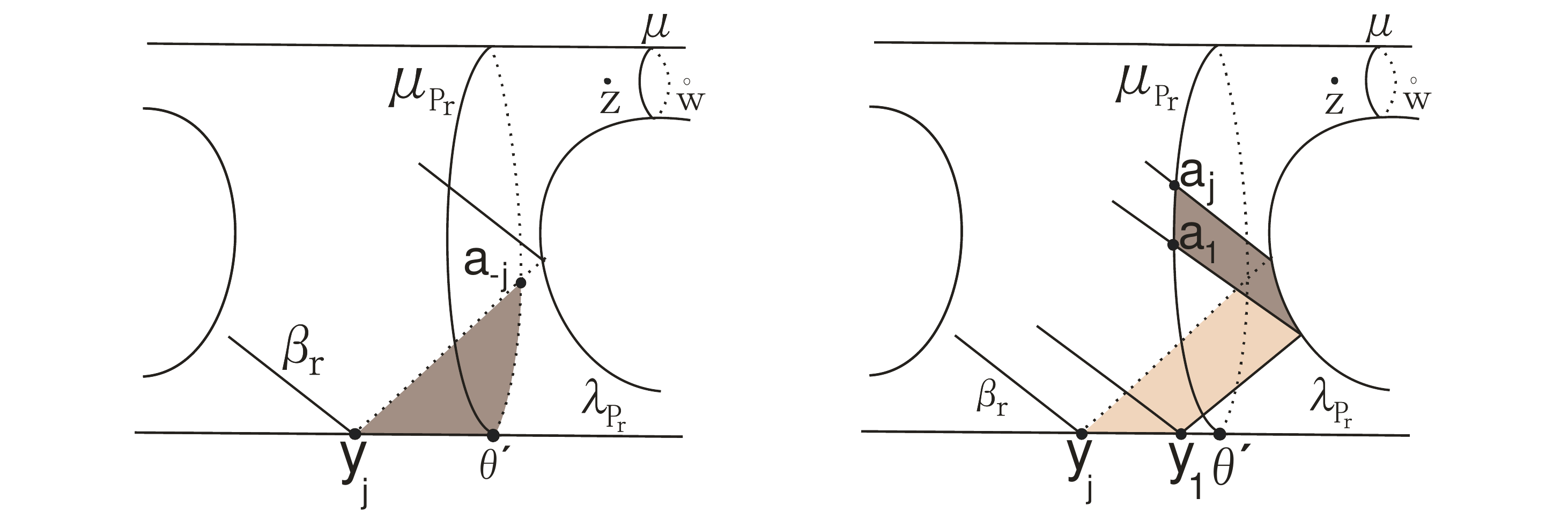}
	\setlength{\abovecaptionskip}{-0.6cm}
	\caption{The left-hand side is the domain $D_{y_{j}, \theta', a_{-j}}$, and the right-hand domain is $D_{y_{j}, a_{j}}$.}
	\label{fig:f8}	
\end{figure}

In the following lemma, we first compare some generators with common restriction to their second factors, and then compare some generators coming from different classes.

\begin{lemma}
\label{where}
\rm
\begin{enumerate}
   \item {\it For each generator $\textbf{q}\in \widehat{\operatorname {CFK}}(S_{t}^{3}(K), \mu_{K})$, we have
   \begin{equation}
   \label{noname}
  A(\{x, a_{j}\}\times \textbf{q})-A(\{x, a_{k}\}\times \textbf{q})=A(\{x, a_{j}\})-A(\{x, a_{k}\}),
  \end{equation}
  for $j,k \in \{\pm 1,\cdots, \pm p_{r}\}$. For any generator $\textbf{p}\in \widehat{\operatorname {CFK}}(S^{3}, K)$, we have
  \begin{equation}
  \label{noname1}
  A(\{x, y_{m}\}\times \textbf{p})-A(\{x, y_{n}\}\times \textbf{p})=A(\{x, y_{m}\})-A(\{x, y_{n}\}),
  \end{equation}
  for $1\leq m, n\leq q_{r}$. 
Here the Alexander gradings on the right sides of {\rm (\ref{noname})} and {\rm (\ref{noname1})} are the gradings in the complexes $\widehat{\operatorname {CFK}}(S^{1}\times S^{2}, P_{r})$ and $\widehat{\operatorname {CFK}}(S^{3}, P_{r})$ respectively.}
	\item {\it For each generator $\textbf{q}\in \widehat{\operatorname {CFK}}(S_{t}^{3}(K), \mu_{K})$, the following hold:
	\begin{equation}
	\label{third}
	\begin{split}
	A(\{x, a_{-j}\}\times \textbf{q})-A(\{x, a_{-2q_{r}+j-1}\}\times \textbf{q})=1,\\
	A(\{x, a_{k}\}\times \textbf{q})-A(\{x, a_{-k}\}\times \textbf{q})=1,	
	\end{split}
	\end{equation}
 for $1\leq j\leq q_{r}$ and $1\leq k\leq p_{r}$.} 
  
   \item {\it There exist a generator $\textbf{p}\in \widehat{\operatorname {CFK}}(S^{3}, K)$ and a generator $\textbf{q}\in \widehat{\operatorname {CFK}}(S_{t}^{3}(K), \mu_{K})$ such that
  \begin{equation}
  \label{fifth}
  A(\{x, y_{j}\}\times \textbf{p})-A(\{x, a_{-j}\}\times \textbf{q})=0,
  \end{equation}
for any $1\leq j \leq q_{r}$.}
\end{enumerate}
\end{lemma}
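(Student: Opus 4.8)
The whole lemma is driven by the formula $A(x)-A(y)=n_{z}(\mathcal{D})-n_{w}(\mathcal{D})$ for a domain $\mathcal{D}$ connecting $x$ to $y$, together with the structural fact that both base points $z$ and $w$ sit in the pattern piece $\Sigma_{2}$ of the connected sum $\Sigma_{g+2}=\Sigma_{g}\sharp\Sigma_{2}$. For part (i) the plan is to run the mechanism of Lemma \ref{case1} with the two factors interchanged: here the second factor ($\textbf{q}$, respectively $\textbf{p}$) is held fixed and only the $\beta_{r}$-point changes, so I would realize a connecting domain entirely inside $\Sigma_{2}$. Concretely, take a domain $\mathcal{D}'$ connecting $\{x,a_{j}\}$ to $\{x,a_{k}\}$ in $\operatorname{HD}(S^{1}\times S^{2},P_{r})$ (respectively $\{x,y_{m}\}$ to $\{x,y_{n}\}$ in $\operatorname{HD}(S^{3},P_{r})$). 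Since the band sums on $\mu_{P_{r}}$ and $\lambda_{P_{r}}$ are localized near $\theta'$, which can be kept disjoint from $\mathcal{D}'$, the same two-chain is a valid domain in $\operatorname{HD}(S^{3},K_{t}^{P_{r}})$ connecting the two product generators, with identical $n_{z}$ and $n_{w}$ because $z,w\in\Sigma_{2}$. The only subtlety is that $\mathcal{D}'$ is determined only up to a periodic domain of $\operatorname{HD}(S^{1}\times S^{2},P_{r})$, but by Lemma \ref{periodic} these contribute $n_{z}=n_{w}=0$, so $n_{z}(\mathcal{D}')-n_{w}(\mathcal{D}')$ is well defined.

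For part (ii) I would invoke part (i) to reduce both identities to computations of $A(\{x,a_{\bullet}\})-A(\{x,a_{\bullet}\})$ inside $\operatorname{HD}(S^{1}\times S^{2},P_{r})$. Since the $2p_{r}$ points $a_{1},a_{-1},\dots,a_{p_{r}},a_{-p_{r}}$ are ordered along $\mu_{P_{r}}\cap\beta_{r}$, the relevant pairs cobound explicit elementary domains (bigons and rectangles of $\Sigma_{2}$ cut out by $\mu_{P_{r}}$ and $\beta_{r}$), from which one reads off that exactly one of $z,w$ is covered, giving $n_{z}-n_{w}=1$. The bookkeeping of the index $-2q_{r}+j-1$ is handled most cleanly by induction on $r$, transporting these domains by the twist homeomorphism $f$ of Lemma \ref{periodic}, which fixes $z$ and $w$.

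The heart of the lemma is part (iii), the comparison across the two classes, and here the generators must be chosen compatibly. I would take $\textbf{p}=(x_{0},\textbf{r})\in\widehat{\operatorname{CFK}}(S^{3},K)$ with $x_{0}=\mu_{K}\cap\beta_{1}$ and $\textbf{q}=(x_{1},\textbf{r})\in\widehat{\operatorname{CFK}}(S^{3}_{t}(K),\mu_{K})$ with $x_{1}=\lambda_{K}\cap\beta_{1}$, sharing the same tuple $\textbf{r}$ on the remaining curves (such $\textbf{p}$ exists by the construction of $\operatorname{HD}(S^{3},K)$, and the corresponding $\textbf{q}$ is then automatic). Then $\{x,y_{j}\}\times\textbf{p}$ and $\{x,a_{-j}\}\times\textbf{q}$ differ only in the $\beta_{r}$-point ($y_{j}$ versus $a_{-j}$) and the $\beta_{1}$-point ($x_{0}$ versus $x_{1}$), and I would build the connecting domain by gluing, across the connected-sum tube at $\theta,\theta'$, the pattern triangle $D_{y_{j},\theta',a_{-j}}\subset\Sigma_{2}$ (edges on $\beta_{r},\lambda_{P_{r}},\mu_{P_{r}}$) to the surgery triangle $\Delta_{\theta,x_{0},x_{1}}\subset\Sigma_{g}$ of Figure \ref{fig:f7} (edges on $\beta_{1},\mu_{K},\lambda_{K}$). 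The $\lambda$-arcs fuse into one arc on $\lambda_{P_{r}}\sharp\lambda_{K}$ and the $\mu$-arcs into one on $\mu_{P_{r}}\sharp\mu_{K}$, so the two triangle corners at $\theta,\theta'$ are absorbed and the result is a genuine domain connecting the two product generators. As $\Delta_{\theta,x_{0},x_{1}}$ lies in $\Sigma_{g}$, away from $z$ and $w$, it contributes nothing to $n_{z}-n_{w}$, and the grading difference collapses to $n_{z}(D_{y_{j},\theta',a_{-j}})-n_{w}(D_{y_{j},\theta',a_{-j}})$, which I would compute to be $0$ from the placement of $z,w$ in Figure \ref{fig:f8}.

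The main obstacle is exactly part (iii). One must confirm that $\Delta_{\theta,x_{0},x_{1}}$ exists for both signs of $t$ (this is where the framing enters, hence the two cases drawn in Figure \ref{fig:f7}), that it glues to $D_{y_{j},\theta',a_{-j}}$ across the tube into a legitimate domain respecting the band sums, and above all that $z$ and $w$ are covered with equal multiplicity by the pattern triangle for every $j$. This last geometric verification is the linchpin on which the identity $A(\{x,y_{j}\}\times\textbf{p})=A(\{x,a_{-j}\}\times\textbf{q})$ rests.
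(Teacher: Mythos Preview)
Your plan for parts (i) and (ii) is essentially the paper's argument. For (i) the paper runs the same idea in the reverse direction: it starts with a Whitney disk $\phi$ in $\Sigma_{g+2}$ connecting the two product generators, normalizes it by $n_{w'}(\phi)=0$, and shows that $\phi|_{\Sigma_g}$ is then a periodic domain of $\operatorname{HD}(S^3,K)$, hence empty, so $\phi$ lives entirely in $\Sigma_2$. For (ii) the paper simply exhibits the explicit domains (Figure~\ref{fig:f20}) rather than using your induction via the twist $f$; either route works.

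The genuine gap is in part (iii) for $t>0$. Your gluing of the surgery triangle $\Delta_{\theta,x_0,x_1}$ with the small pattern triangle $D_{y_j,\theta',a_{-j}}$ is precisely the paper's argument when $t<0$, and there indeed $n_z=n_w=0$. But the two pictures in Figure~\ref{fig:f7} are not interchangeable: for $t>0$ the triangle $\Delta_{\theta,x_0,x_1}$ sits on the opposite side of $\theta$, and its $\lambda_K$- and $\mu_K$-edges no longer concatenate across the tube with the $\lambda_{P_r}$- and $\mu_{P_r}$-edges of $D_{y_j,\theta',a_{-j}}$ under the band sums. So the glued two-chain is not a legitimate domain in $\operatorname{HD}(S^3,K_t^{P_r})$, and the ``linchpin'' verification you flag cannot be carried out as stated.

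The paper resolves the $t>0$ case with a different and more elaborate pattern domain $D_{y_j,\theta',a_j}$ (note $a_j$, not $a_{-j}$): it takes the splitting disk of the rational tangle $(B^3,S_r)$ bounded by $\beta_r$, modifies it locally near the feet $C$ and $D$ to obtain a domain $D_{y_1,\theta',a_1}\subset\Sigma_2$ (Figure~\ref{fig:f15}), and adds a rectangular piece $D_{y_j,a_j}$ for $j>1$ (Figure~\ref{fig:f8}). The connected sum of this domain with $\Delta_{\theta,x_0,x_1}$ is a Whitney disk from $\{x,a_j\}\times\textbf{q}$ to $\{x,y_j\}\times\textbf{p}$ with $n_z=1$, $n_w=0$, giving $A(\{x,y_j\}\times\textbf{p})-A(\{x,a_j\}\times\textbf{q})=-1$. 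The desired identity with $a_{-j}$ then follows by combining with the second line of (ii). So the splitting-disk construction is the missing ingredient in your sketch for $t>0$.
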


\begin{proof}
\begin{enumerate}
  	\item Let $\phi$ be a Whitney disk from $\{x, a_{j}\}\times \textbf{q}$ to $\{x, a_{k}\}\times \textbf{q}$ with $n_{w'}(\phi)=0$. Let $\phi|_{\Sigma_{g}}$ denote the restriction of $\phi$ to $\Sigma_{g}$. Then	$$\partial(\phi|_{\Sigma_{g}})=\sum_{i=1}^{g-1}r_{i}\cdot\alpha_{i}+\sum_{j=1}^{g}s_{j}\cdot\beta_{j}+r_{0}\cdot\mu_{K}+s_{0}\cdot\lambda_{K},$$ for some integers $r_{i}, s_{j}$ where $0\leq i\leq g-1$ and $0\leq j\leq g$. First we verify that $s_{0}=0$. This is because the multiplicity of $\lambda_{K}\sharp\lambda_{P_{r}}$ in $\partial \phi$ should be equal to the multiplicity of $\alpha$ in $\partial \phi$, which must be zero. 
  	
Therefore we are able to regard $\phi|_{\Sigma_{g}}$ as a two-chain in the Heegaard diagram $\operatorname{HD}(S^{3}, K)$. We note that its boundary $\partial (\phi|_{\Sigma_{g}})$ consists of $\boldsymbol{\alpha}$ curves and $\boldsymbol{\beta}$ curves in $\operatorname{HD}(S^{3}, K)$. These two facts together with the assumption $n_{w'}(\phi)=0$, that is $n_{w'}(\phi|_{\Sigma_{g}})=0$, imply that $\phi|_{\Sigma_{g}}$ is a periodic domain in $\operatorname{HD}(S^{3}, K)$. Since there is no periodic domain in any Heegaard diagram for $S^{3}$, the domain $\phi|_{\Sigma_{g}}$ must be the empty set. As a result, the domain of $\phi$ is contained in $\Sigma_{2}$, and therefore can be regarded as a Whitney disk from $\{x, a_{j}\}$ to $\{x, a_{k}\}$ in the Heegaard diagram $\operatorname{HD}(S^{1}\times S^{2}, P_{r})$. A similar argument can be used to prove Equation \eqref{noname1}.

	\item The domain shown in the left-hand figure of Figure \ref{fig:f20} connects $\{x, a_{-2q_{r}+j-1}\}$ to $\{x, a_{-j}\}$. The local multiplicities of points $z$ and $w$ are zero and one respectively. Therefore we have
	$$A(\{x, a_{-j}\})-A(\{x, a_{-2q_{r}+j-1}\})=1.$$ By (\ref{noname}), we have
	$$A(\{x, a_{-j}\}\times \textbf{q})-A(\{x, a_{-2q_{r}+j-1}\}\times \textbf{q})=1,$$ for any $1\leq j\leq q_{r}$. On the other hand, for any $1\leq k\leq p_{r}$, there exists a Whitney disk $\phi_{k}$ from $\{x, a_{k}\}\times \textbf{q}$ to $\{x, a_{-k}\}\times \textbf{q}$, as shown in the right-hand figure of Figure \ref{fig:f20} (the shadowed domain). It is easy to calculate that $n_{z}(\phi_{k})=1$ and $n_{w}(\phi_{k})=0$. The result follows from the definition of the Alexander grading.
	\item Let $\textbf{z}$ represent a $(g-1)$-tuple of intersection points between the $\boldsymbol{\alpha}$ curves $\alpha_{1}, \ldots, \alpha_{g-1}$ and the $\boldsymbol{\beta}$ curves $\beta_{2}, \ldots, \beta_{g}$ in $\Sigma_{g}$. Recall that $x_{0}$ is the unique intersection point in $\beta_{1}\cap \mu_{K}$. Let $x_{1}\in \beta_{1}\cap \lambda_{K}$ be the point shown in Figure~\ref{fig:f7}. Then $\textbf{p}=\{x_{0}, \textbf{z}\}$ becomes a generator of $\widehat{\operatorname {CFK}}(S^{3}, K)$ and $\textbf{q}=\{x_{1}, \textbf{z}\}$ is a generator of $\widehat{\operatorname {CFK}}(S_{t}^{3}(K), \mu_{K})$.
	
Let $\Delta_{\theta, x_{0}, x_{1}}$ be the triangle in Figure \ref{fig:f7} which has vertices $x_{1}, x_{0}$ and $\theta$. When $t<0$, a Whitney disk $\phi_{j}$ from $\{x, a_{-j}\}\times \textbf{q}$ to $\{x, y_{j}\}\times \textbf{p}$ can be obtained by making the connected sum of the triangle $\Delta_{\theta, x_{0}, x_{1}}$ and the triangle in $\Sigma_{2}$ which has vertices $y_{j}, a_{-j}$ and $\theta'$ (see the left-hand figure in Figure \ref{fig:f8}). See the left-hand figure of Figure \ref{fig:f16}. The fact $n_{z}(\phi_{j})=n_{w}(\phi_{j})=0$ implies the conclusion.
	
Before discussing the case when $t>0$, we first describe a domain which has vertices $y_{j}, \theta'$ and $a_{j}$, which we denote $D_{y_{j}, \theta', a_{j}}$. Recall in Section 2, the curve $\beta_{r}$ is chosen to be the boundary of the splitting disk of the tangle $(B^{3}, S_{r})$. We can choose the properly embedded disk shown in Figure \ref{fig:f15} as the splitting disk of $(B^{3}, S_{r})$. By local modifications near the endpoints $C$ and $D$ of the tangle $(B^{3}, S_{r})$, this disk is converted into a domain in $\Sigma_{2}$ which has vertices $y_{1}, \theta'$ and $a_{1}$. We call it $D_{y_{1}, \theta', a_{1}}$. Notice that there exists a domain $D_{y_{j}, a_{j}}$ in $\Sigma_{2}$ which has vertices $y_{1}, y_{j}, a_{1}$ and $a_{j}$ for any $1< j\leq q_{r}$, as shown in Figure \ref{fig:f8}. We define $$D_{y_{j}, \theta', a_{j}}:=D_{y_{j}, a_{j}} + D_{y_{1}, \theta', a_{1}}.$$ See the right-hand figure of Figure \ref{fig:f16}.

When $t>0$, let $\phi_{j}$ be the connected sum of the triangle $\Delta_{\theta, x_{0}, x_{1}}$ in $\Sigma_{g}$ and the domain $D_{y_{j}, \theta', a_{j}}$ in $\Sigma_{2}$. See the middle figure of Figure \ref{fig:f16}. Then $\phi_{j}$ is a Whitney disk from $\{x, a_{j}\}\times \textbf{q}$ to $\{x, y_{j}\}\times \textbf{p}$. Since $n_{z}(\phi_{j})=1$ and $n_{w}(\phi_{j})=0$, we get $A(\{x, y_{j}\}\times \textbf{p})-A(\{x, a_{j}\}\times \textbf{q})=-1$. This equation together with (\ref{third}) implies (\ref{fifth}) for the case $t>0$.
\end{enumerate}

\end{proof}

\begin{figure}
  \setlength{\abovecaptionskip}{-0.2cm}
	\centering
		\includegraphics[width=0.9\textwidth]{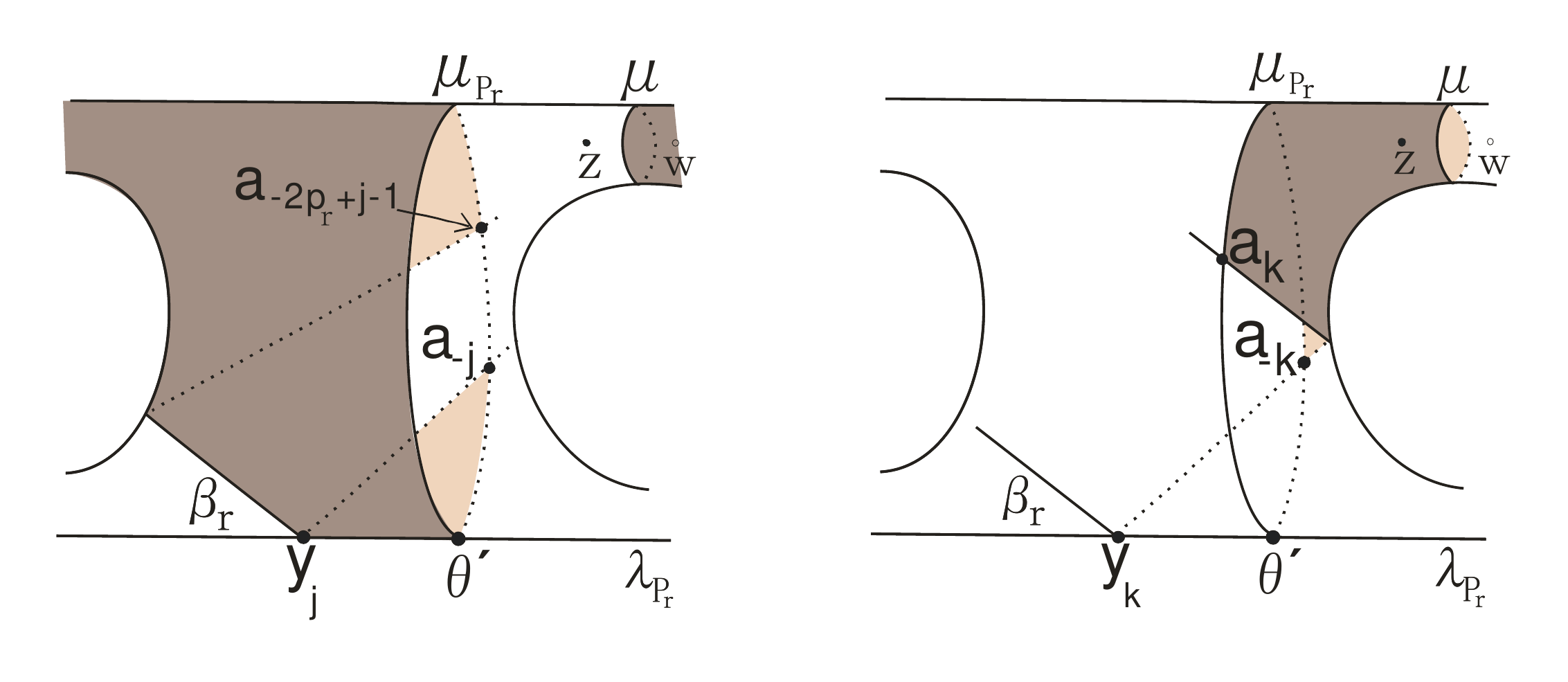}
		\caption{The domain on the left-hand side connects $\{x, a_{-2q_{r}+j-1}\}$ to $\{x, a_{-j}\}$ in $\Sigma_{2}$, while the domain on the right-hand side is from $\{x, a_{k}\}\times \textbf{q}$ to $\{x, a_{-k}\}\times \textbf{q}$ in $\Sigma_{g+2}$.}
	\label{fig:f20}
\end{figure}

\begin{figure}[h]
	\centering
		\includegraphics[width=1.0\textwidth]{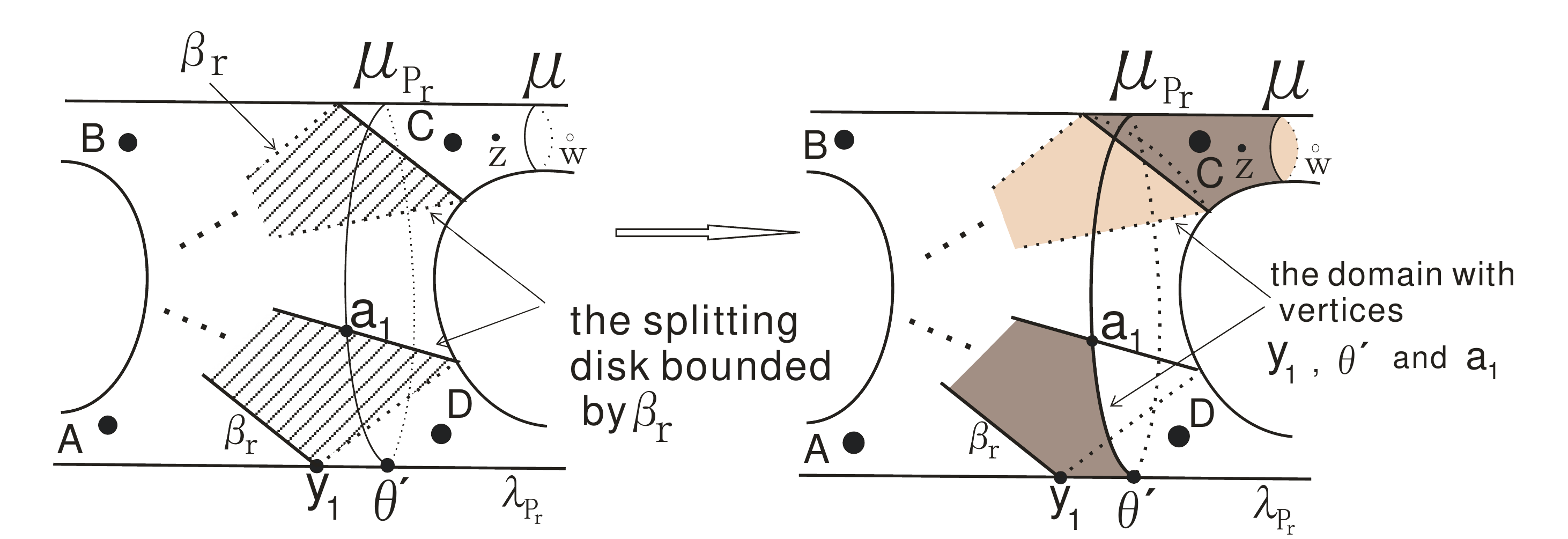}
	\caption{The left-hand figure shows the splitting disk of $(B^{3}, S_{r})$ bounded by $\beta_{r}$. In the right-hand figure the disk is modified to make the domain $D_{y_{1}, \theta', a_{1}}$ in $\Sigma_{2}$.}
	\label{fig:f15}	
\end{figure}

\begin{figure}
  \setlength{\abovecaptionskip}{-0.1cm}
	\centering
		\includegraphics[width=1.0\textwidth]{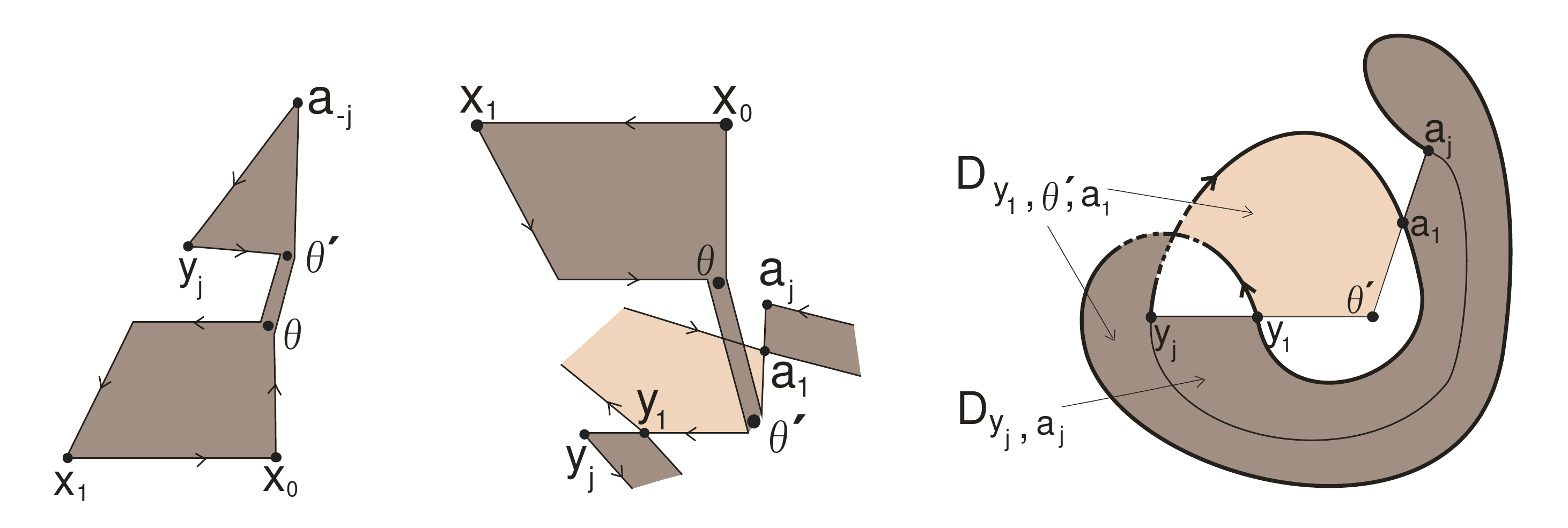}
	\caption{The first two figures show the connected sums of the cases of $t<0$ and $t>0$. The third figure illustrates $D_{y_{j}, \theta', a_{j}}=D_{y_{j}, a_{j}} + D_{y_{1}, \theta', a_{1}}$.}
	\label{fig:f16}	
\end{figure}

Recall that the chain complexes $\widehat{\operatorname {CFK}}(S^{1}\times S^{2}, P_{r})$ and $\widehat{\operatorname {CFK}}(S^{3}, P_{r})$ give rise to the knot Floer homologies of the links $C(2r+1)$ and $C(2r)$ respectively. Let ${\rm det}(L):=\left|\Delta_{L}(-1)\right|$ denote the determinant of a link $L\subset S^{3}$. We state some observations about the determinants of $C(2r+1)$ and $C(2r)$, and about the complexes $\widehat{\operatorname {CFK}}(S^{1}\times S^{2}, P_{r})$ and $\widehat{\operatorname {CFK}}(S^{3}, P_{r})$. 

\begin{lemma}
\begin{enumerate}
\rm
	\item We have {\it ${\rm det}(C(2r+1))=p_{r}$ and ${\rm det}(C(2r))=q_{r}$}. 
	\item {\it The differentials of the complexes $\widehat{\operatorname {CFK}}(S^{1}\times S^{2}, P_{r})$ and $\widehat{\operatorname {CFK}}(S^{3}, P_{r})$ are trivial}.
\end{enumerate}
\end{lemma}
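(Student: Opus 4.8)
The plan is to handle the two assertions in turn, with the second resting on the first through a rank count. For assertion (i) I would realize $C(2r)$ and $C(2r+1)$ as the two closures of the rational tangle $S_r$, whose fraction is the continued fraction $[2,2,\dots,2]$. Writing $\alpha_m/\beta_m$ for this continued fraction with $m$ twos, one has $\alpha_m=2\alpha_{m-1}+\alpha_{m-2}$ and $\beta_m=\alpha_{m-1}$, with $\alpha_0=1$ and $\alpha_1=2$; by the definition of $p_r/q_r$ this gives $p_r=\alpha_{2r+1}$ and $q_r=\beta_{2r+1}=\alpha_{2r}$. The knot $C(2r)$ has fraction $\alpha_{2r}/\beta_{2r}$ and the link $C(2r+1)$ has fraction $\alpha_{2r+1}/\beta_{2r+1}$. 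Since the determinant of a two-bridge link in Conway form equals the absolute value of the numerator of its fraction, and since mirroring (which only changes the signs of the twist boxes) preserves $|\Delta_L(-1)|$, I obtain $\det(C(2r))=\alpha_{2r}=q_r$ and $\det(C(2r+1))=\alpha_{2r+1}=p_r$. This part is essentially a continued-fraction computation once the recursion and the standard determinant formula are in place.

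For assertion (ii) I would compare the rank of the homology with the number of generators of each complex. By the construction of the diagrams, $\widehat{\operatorname{CFK}}(S^3,P_r)$ has exactly the $q_r$ generators $\{x,y_i\}$, $1\le i\le q_r$, while $\widehat{\operatorname{CFK}}(S^1\times S^2,P_r)$ has exactly the $2p_r$ generators $\{x,a_j\}$, $j=\pm1,\dots,\pm p_r$. Both $C(2r)$ and $C(2r+1)$ are two-bridge, hence alternating, so by the computation of knot Floer homology for alternating knots and links their $\widehat{\operatorname{HFK}}$ is thin, i.e.\ supported on a single diagonal $M-A=\text{const}$ and free over $\mathbb{Z}$. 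Thinness forces all homology in a fixed Alexander grading to lie in a single Maslov grading, so the total rank equals $\sum_i|\chi(\widehat{\operatorname{HFK}}(\cdot,i))|$; by Theorem \ref{euler} this is read off from $(T^{1/2}-T^{-1/2})^{\ell-1}\Delta_L(T)$ evaluated, in absolute value, at $T=-1$. For the knot $C(2r)$ this total rank is $\det(C(2r))=q_r$, and for the two-component link $C(2r+1)$ it is $2\det(C(2r+1))=2p_r$, the extra factor of two coming from the $(T^{1/2}-T^{-1/2})$ factor, exactly as for the Hopf link.

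Thus in each case the total rank of the homology equals the number of generators of the complex, and I would conclude that the differential vanishes: over $\mathbb{Q}$ the homology of a free complex has rank at most the number of generators, with equality precisely when the differential is zero, and since the complexes are free over $\mathbb{Z}$ the vanishing of $\partial\otimes\mathbb{Q}$ forces $\partial=0$ integrally. The main obstacle I anticipate is the bookkeeping in (ii): one must match the two generator counts $q_r$ and $2p_r$ against the correct normalization of the link invariant, keeping track of the $(T^{1/2}-T^{-1/2})^{\ell-1}$ factor that doubles the total rank for the two-component link but not for the knot, and one must be sure that thinness genuinely upgrades the Euler-characteristic identity of Theorem \ref{euler} to an identity of ranks.
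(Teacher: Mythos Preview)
Your proposal is correct, and for part (i) it is essentially the same as the paper's: both identify the determinant of a two-bridge link with the numerator of its continued fraction. The paper phrases this via the double-branched cover (the lens space $L(p,q)$, with $|H_1|=|p|$), while you phrase it as a direct continued-fraction recursion plus the standard determinant formula; these are two ways of saying the same thing.

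For part (ii) the approaches diverge slightly. You invoke the Ozsv\'ath--Szab\'o thinness theorem for alternating links to conclude that the total rank of $\widehat{\operatorname{HFK}}$ equals $\sum_i|\chi(\widehat{\operatorname{HFK}}(\cdot,i))|$, and then match this against the generator count. The paper avoids the thinness theorem entirely: it simply observes that setting $T=-1$ in Theorem~\ref{euler} gives
\[
\Bigl|\sum_{i,j}(-1)^{i+j}\operatorname{rk}\widehat{\operatorname{HFK}}_j(\cdot,i)\Bigr|=\text{(number of generators)},
\]
and since this alternating sum is bounded above by the total homology rank, which in turn is bounded by the number of generators, the chain of inequalities collapses and the differential must vanish. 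The paper's route is more self-contained (no external structure theorem needed), while yours imports a stronger result but makes the mechanism---that thinness turns Euler characteristics into ranks---more transparent. Both are valid; the paper's is the more elementary of the two.
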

\begin{proof}
\begin{enumerate}
	\item In general, the double-branched cover of $S^{3}$ branched along the two-bridge link $L:=C(b_{1}, b_{2}, \cdots, b_{n})$ is the lens space $L(p, q)$, where 
\begin{equation*} 
\cfrac{p}{q}= b_{1}+\cfrac{1}{b_{2}+\cfrac{1}{b_{3}+\cdots+\cfrac{1}{b_{n}}}}.
\end{equation*} 	
Then
	$$\left|\Delta_{L}(-1)\right|=|H_{1}(L(p, q), {\mathbb Z})|=\left|{\mathbb Z}/{|p|}{\mathbb Z}\right|=|p|.$$
The double-branched covers of $S^{3}$ branched along the two-bridge links $C(2r+1)$ and $C(2r)$ are $L(p_{r}, q_{r})$ and $L(q_{r}, p_{r}-2q_{r})$ respectively. Therefore, we have
	$${\rm det}(C(2r+1))=p_{r} \text{ and } {\rm det}(C(2r))=q_{r}.$$
  \item As recalled in Theorem \ref{euler}, the Euler characteristic of the knot Floer homology of a link is the Alexander-Conway polynomial. In the cases of $C(2r+1)$ and $C(2r)$, we have
  \begin{equation*}
  \begin{split}
  \sum_{i\in {\mathbb Z}}T^{i}\sum_{j\in \frac{1}{2}+{\mathbb Z}}(-1)^{j-1/2}{\rm rk}(\widehat{\operatorname {HFK}}_{j}(S^{3}, C(2r+1), i))& =(T^{1/2}-T^{-1/2})\Delta_{C(2r+1)}(T),\\
  \sum_{i\in {\mathbb Z}}T^{i}\sum_{j\in {\mathbb Z}}(-1)^{j}{\rm rk}(\widehat{\operatorname {HFK}}_{j}(S^{3}, C(2r), i))& =\Delta_{C(2r)}(T).
  \end{split}
  \end{equation*}

When we put $T=-1$, the equalities above simply become:
\begin{equation*}
\begin{split}
\left|\sum_{i\in {\mathbb Z}, j\in \frac{1}{2}+{\mathbb Z}}(-1)^{i+j-1/2}{\rm rk}(\widehat{\operatorname {HFK}}_{j}(S^{3}, C(2r+1), i))\right|& =\left|2\Delta_{C(2r+1)}(-1)\right|=2p_{r},\\
\left|\sum_{i\in {\mathbb Z}, j\in {\mathbb Z}}(-1)^{i+j}{\rm rk}(\widehat{\operatorname {HFK}}_{j}(S^{3}, C(2r), i))\right|& =\left|\Delta_{C(2r)}(-1)\right|=q_{r}.
\end{split}
\end{equation*}

Recall that the numbers of the generators in the complexes $\widehat{\operatorname {CFK}}(S^{1}\times S^{2}, P_{r})$ and $\widehat{\operatorname {CFK}}(S^{3}, P_{r})$ are exactly $2p_{r}$ and $q_{r}$, respectively. Here we are using the integer coefficient for the homologies. Therefore the differentials of the complexes $\widehat{\operatorname {CFK}}(S^{1}\times S^{2}, P_{r})$ and $\widehat{\operatorname {CFK}}(S^{3}, P_{r})$ are trivial.
\end{enumerate}
\end{proof}

On the other hand, since the link $C(2r+1)$ is a fibred alternating link \cite{MR870705}, the polynomial $\Delta_{C(2r+1)}(T)$ is monic with highest power $(r+1)/2$ (refer to \cite{MR2271293}). In other words, the top Alexander grading of $\widehat{\operatorname {HFK}}(S^{3}, C(2r+1))$, which corresponds to the highest power of the polynomial $(T^{1/2}-T^{-1/2})\Delta_{C(2r+1)}(T)$, is $r+1$, and the rank satisfies $${\rm rk}(\widehat{\operatorname {HFK}}(S^{3}, C(2r+1), r+1))=1.$$ That is to say, in the Heegaard diagram $\operatorname{HD}(S^{1}\times S^{2}, P_{r})$, there is a unique intersection point, denoted $a_{f(r)}$, in $\mu_{P_{r}}\cap \beta_{r}$ generating $\widehat{\operatorname {HFK}}(S^{3}, C(2r+1), r+1)$. For convenience, here we define a map $f: {\mathbb Z}_{\geq 0} \longrightarrow {\mathbb Z}_{> 0}$ by sending $r$ to $f(r)$.

From Equations (\ref{first}) to (\ref{fifth}) and the discussion above, we conclude that there are $2r+3$ Alexander gradings in the chain complex $\widehat{\operatorname {CFK}}(S^{3}, K_{t}^{P_{r}})$ at which the complex is non-trivial. The subcomplexes at the top and the bottom gradings are generated by generators of the form $\{x, a_{f(r)}\}\times \widehat{\operatorname {CFK}}(S_{t}^{3}(K), \mu_{K})$ and $\{x, a_{\gamma}\}\times \widehat{\operatorname {CFK}}(S_{t}^{3}(K), \mu_{K})$ for some $-p_{r} \leq \gamma < -q_{r}$, respectively. Therefore, the following identifications exist as groups:
\begin{equation}
\label{group}
\widehat{\operatorname {CFK}}(S^{3}, K_{t}^{P_{r}}, *)\cong \widehat{\operatorname {CFK}}(S_{t}^{3}(K), \mu_{K}),
\end{equation}
where $*$ stands for the top or bottom grading of $\widehat{\operatorname {CFK}}(S^{3}, K_{t}^{P_{r}})$.
As we will see in Proposition \ref{identity}, these identifications can be extended to the isomorphisms of homologies, and the homology groups of $\widehat{\operatorname {CFK}}(S^{3}, K_{t}^{P_{r}})$ at the top and the bottom gradings are non-trivial. Due to the symmetry of knot Floer homology with respect to the Alexander grading, the top and the bottom gradings of the knot Floer homology of $K_{t}^{P_{r}}$ for $t\neq 0$ are $r+1$ and $-r-1$, respectively, and we focus on the homology at the top grading $\widehat{\operatorname {HFK}}(S^{3}, K_{t}^{P_{r}}, r+1)$.

Now we prove Proposition \ref{identity}. A proof when $r=0$ is given in \cite{MR2372849} by Hedden. The general statement here is proved in exactly the same way. For the reader's convenience, we recall the proof.

\begin{prop}
\label{identity}
Let $K$ be a knot in $S^{3}$. Then we have
\begin{equation}
\label{iso}
\widehat{\operatorname {HFK}}(S^{3}, K_{t}^{P_{r}}, *)\cong \bigoplus_{s_{i}\in {\rm Spin}^{c}(S_{t}^{3}(K))}\widehat{\operatorname {HFK}}(S_{t}^{3}(K), \mu_{K}, s_{i}),
\end{equation}
where $*$ stands for the top or bottom grading of $\widehat{\operatorname {CFK}}(S^{3}, K_{t}^{P_{r}})$.
\end{prop}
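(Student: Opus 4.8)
The plan is to upgrade the group-level identification (\ref{group}) to an isomorphism of homologies by showing that the differential of $\widehat{\operatorname{CFK}}(S^{3}, K_{t}^{P_{r}})$, when restricted to the top (or bottom) Alexander grading, agrees with the differential of $\widehat{\operatorname{CFK}}(S_{t}^{3}(K), \mu_{K})$ under the identification of generators. Recall from (\ref{group}) that the generators at the top grading all have the form $\{x, a_{f(r)}\}\times \textbf{q}$ where $\textbf{q}$ ranges over generators of $\widehat{\operatorname{CFK}}(S_{t}^{3}(K), \mu_{K})$, and that the first factor $\{x, a_{f(r)}\}$ is fixed. So the map sending $\{x, a_{f(r)}\}\times \textbf{q}\mapsto \textbf{q}$ is the candidate chain isomorphism, and I need only verify it intertwines the two differentials.

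First I would analyze which holomorphic disks contribute to the differential of a top-grading generator $\{x, a_{f(r)}\}\times \textbf{q}$. A disk $\phi$ counted in the differential must have Maslov index $\mu(\phi)=1$ and $n_{w}(\phi)=0$, and since the differential preserves the Alexander filtration, its image must again sit in the top grading. The key step is to argue that any such disk must restrict trivially to the $\Sigma_{2}$ factor: because the first coordinate $x\in\mu\cap\alpha$ is the unique intersection point on $\mu$, and $a_{f(r)}$ is the unique generator of the top-grading homology of the pattern piece, a disk changing the $\Sigma_{2}$-part would either move off $\mu$ (impossible) or move $a_{f(r)}$ to a different $a_{j}$, which by Lemma \ref{where} lowers the Alexander grading and hence cannot contribute. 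This forces $\phi$ to be supported in $\Sigma_{g}$, so its restriction is precisely a Whitney disk in $\operatorname{HD}(S_{t}^{3}(K), \mu_{K})$ connecting $\textbf{q}$ to $\textbf{q}'$. Conversely every such disk in the surgered diagram extends to the connected-sum diagram by the small-triangle/gluing argument, giving a bijection of disk counts; this is the same reasoning used in the proof of Lemma \ref{where}, where periodic domains and the uniqueness of generators on $\mu$ controlled the $\Sigma_{2}$ restriction.

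With the chain-level identification of differentials established, the isomorphism (\ref{iso}) on homology follows immediately, and the splitting over ${\rm Spin}^{c}(S_{t}^{3}(K))$ on the right-hand side is inherited from the standard ${\rm Spin}^{c}$ decomposition of $\widehat{\operatorname{HFK}}(S_{t}^{3}(K), \mu_{K})$, since $S_{t}^{3}(K)$ is a rational homology sphere for $t\neq 0$ and its Floer homology splits as a direct sum over its ${\rm Spin}^{c}$-structures. The bottom-grading case is handled identically, replacing $a_{f(r)}$ by the appropriate generator $a_{\gamma}$.

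The main obstacle I expect is the disk-count matching step: making rigorous the claim that holomorphic disks in the connected-sum diagram that stay at the top Alexander grading are in exact correspondence with those in $\operatorname{HD}(S_{t}^{3}(K), \mu_{K})$, including showing no disk can have a nontrivial $\Sigma_{2}$-component contributing to the differential. This requires a careful Maslov-index bookkeeping across the connected sum (using the additivity of $e(\mathcal{D})$ and the point measures from Theorem \ref{lip}) together with a gluing argument for the triangles near $\theta, \theta'$. Fortunately, since this is precisely Hedden's argument from Section 3 of \cite{MR2372849} transported verbatim to the pattern $P_{r}$ — and Lemma \ref{where} has already isolated all the grading computations that the pattern piece contributes — I would cite that correspondence and simply indicate how the uniqueness of $a_{f(r)}$ and the triviality of the $\Sigma_{2}$-differential carry over to the general $r$ case.
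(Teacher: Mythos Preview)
Your overall strategy matches the paper's: identify generators and then check that the differentials agree. But the key step---``any such disk must restrict trivially to the $\Sigma_{2}$ factor''---is not justified by the reasoning you give. You argue that a nontrivial $\Sigma_{2}$-component would have to move $x$ or $a_{f(r)}$, and since both are forced to stay put at the top grading, the $\Sigma_{2}$-part must vanish. That inference is wrong: the $\Sigma_{2}$-restriction of a Whitney disk connecting $\{x,a_{f(r)}\}\times\mathbf{p}$ to $\{x,a_{f(r)}\}\times\mathbf{q}$ can be a nonzero multiple $n\cdot\mathcal{Q}_{r}$ of the periodic domain from Lemma~\ref{periodic}, which fixes both $x$ and $a_{f(r)}$ and has $n_{z}=n_{w}=0$, so neither the Alexander grading nor the basepoints obstruct it. Maslov-index bookkeeping does not exclude this either.

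What actually kills these disks---and this is the substance of the paper's proof, following Hedden---is that $\mathcal{Q}_{r}$ has both positive and negative local multiplicities, so $n\cdot\mathcal{Q}_{r}$ with $n\neq 0$ cannot be the domain of a holomorphic representative. Moreover, the $\Sigma_{2}$-restriction equals $n\cdot\mathcal{Q}_{r}$ with $n\neq 0$ precisely when $\mathbf{p}$ and $\mathbf{q}$ lie in different ${\rm Spin}^{c}$-structures of $S^{3}_{t}(K)$; when they lie in the same one, the restriction is forced to be zero and the disk sits entirely in $\Sigma_{g}$. So the ${\rm Spin}^{c}$ splitting is not something you can simply ``inherit'' after the fact---it is exactly the content of the periodic-domain argument. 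Replace your Alexander-grading/uniqueness reasoning with this positivity argument and the proof goes through.
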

\begin{proof}
We prove the isomorphism for the top grading case. The other case can be proved by the same argument.
Consider the complex $(\widehat{\operatorname {CFK}}(S^{3}, K_{t}^{P_{r}}, *), \widehat{\partial})$ with $*$ being the top grading. Suppose $\{x, a_{f(r)}\}\times \textbf{p}$ and $\{x, a_{f(r)}\}\times \textbf{q}$ are two generators of the complex, where $\textbf{p}, \textbf{q} \in \widehat{\operatorname {CFK}}(S_{t}^{3}(K), \mu_{K})$. There is a unique Whitney disk $\phi_{\textbf{p}, \textbf{q}}$ in $\Sigma_{g+2}$ with $n_{z}(\phi_{\textbf{p}, \textbf{q}})= n_{w}(\phi_{\textbf{p}, \textbf{q}})=0$ connecting $\{x, a_{f(r)}\}\times \textbf{p}$ to $\{x, a_{f(r)}\}\times \textbf{q}$.

If $\textbf{p}$ and $\textbf{q}$ belong to different elements in ${\rm Spin}^{c}(S_{t}^{3}(K))$, the differential $\widehat{\partial}$ never connects $\{x, a_{f(r)}\}\times \textbf{p}$ to $\{x, a_{f(r)}\}\times \textbf{q}$. This is because the restriction of $\phi_{\textbf{p}, \textbf{q}}$ to $\Sigma_{2}$ is $n\cdot {\cal Q}_{r}$ for some $n\neq 0$. Recall that ${\cal Q}_{r}$ is a generator of the periodic domains in $\operatorname{HD}(S^{1}\times S^{2}, P_{r})$, and it has both positive and negative local multiplicities. Therefore the Whitney disk $\phi_{\textbf{p}, \textbf{q}}$ does not have holomorphic representatives. We can conclude that the differential $\widehat{\partial}$ respects the splitting of $\widehat{\operatorname {CFK}}(S_{t}^{3}(K), \mu_{K})$ along ${\rm Spin}^{c}(S_{t}^{3}(K))$.

Now suppose that $\textbf{p}$ and $\textbf{q}$ belong to the same element in ${\rm Spin}^{c}(S_{t}^{3}(K))$. In this case, the restriction of $\phi_{\textbf{p}, \textbf{q}}$ to $\Sigma_{2}$ is empty. The Whitney disk $\phi_{\textbf{p}, \textbf{q}}$ is completely contained in $\Sigma_{g}$, and therefore can be regarded as a Whitney disk connecting $\textbf{p}$ to $\textbf{q}$ in $\operatorname{HD}(S_{t}^{3}(K), \mu_{K})$, and in addition $n_{w'}(\phi_{\textbf{p}, \textbf{q}})=n_{z''}(\phi_{\textbf{p}, \textbf{q}})= 0$. Therefore $\phi_{\textbf{p}, \textbf{q}}$ contributes equally to the differentials of $\widehat{\operatorname {CFK}}(S^{3}, K_{t}^{P_{r}}, r+1)$ and $\widehat{\operatorname {CFK}}(S_{t}^{3}(K), \mu_{K})$. Conversely given any Whitney disk $\phi'_{\textbf{p}, \textbf{q}}$ in $\operatorname{HD}(S_{t}^{3}(K), \mu_{K})$ connecting $\textbf{p}$ to $\textbf{q}$, if $n_{w'}(\phi'_{\textbf{p}, \textbf{q}})=n_{z''}(\phi'_{\textbf{p}, \textbf{q}})= 0$ it can be naturally regarded as a Whitney disk connecting $\{x, a_{f(r)}\}\times \textbf{p}$ to $\{x, a_{f(r)}\}\times \textbf{q}$ in $\Sigma_{g+2}$, with $n_{w}(\phi'_{\textbf{p}, \textbf{q}})=n_{z}(\phi'_{\textbf{p}, \textbf{q}})= 0$.

In one word, the argument above implies that the differentials on both sides work in the same manner. Therefore, Equation (\ref{iso}) holds.
\end{proof}

\subsection{The group $\widehat{{\rm HFK}}(S^{3}, K_{t}^{P_{r}}, r+1)$ when $|t|\gg 0$}

In this subsection, we adapt the ideas in Section 4 of \cite{MR2372849} to study $\widehat{\operatorname {HFK}}(S^{3}, K_{t}^{P_{r}}, r+1)$ when $|t|$ is sufficiently large. 

As stated in the proof of Lemma \ref{where}, each generator in $\{x, y_{1}\}\times \widehat{\operatorname {CFK}}(S^{3}, K)$ is of the form $\{x, y_{1}\}\times \{x_{0}, \textbf{z}\}$ where $\textbf{z}=(z_{1}, z_{2}, \ldots, z_{g-1})$ for $z_{i}\in \alpha_{i}\cap \beta_{\sigma(i)+1}$ for $1 \leq i\leq g-1$ and $\sigma \in S_{g-1}$, while each generator in $\{x, a_{j}\}\times \widehat{\operatorname {CFK}}(S^{3}_{t}(K), \mu_{K})$ is of the form $\{x, a_{j}\}\times \{x_{\lambda_{K}}, \textbf{z}\}$ for some point $x_{\lambda_{K}}\in\beta_{1} \cap \lambda_{K}\sharp\lambda_{P_{r}}$. The regular neighborhood $\mu_{K}\times I \subset \Sigma_{g}$ of $\mu_{K}$ is called the winding region of the Heegaard diagram $\operatorname{HD}(S^{3}_{t}(K), \mu_{K})$. We assume that for the longitude curve $\lambda_{K}$ of framing $t$, there are $|t|$ intersection points in $\beta_{1} \cap \lambda_{K}\sharp \lambda_{P_{r}}$ supported in the winding region $\mu_{K}\times I$. For convenience, let $\{x_{k}| \left\lfloor {-|t|}/{2}\right\rfloor\leq k \leq \left\lfloor {|t|}/{2}\right\rfloor, k\neq 0\}$ denote the set of these intersection points, where $\left\lfloor u \right\rfloor$ is the integer associated with a number $u$ such that $0 \leq u-\left\lfloor u\right\rfloor<1$. See Figure \ref{fig:f7} for the placement of the points in $\{x_{k}| \left\lfloor {-|t|}/{2}\right\rfloor\leq k \leq \left\lfloor {|t|}/{2}\right\rfloor, k\neq 0\}$. With these assumptions, the intersection point $\{x_{0}, \textbf{z}\}\in \widehat{\operatorname {CFK}}(S^{3}, K)$ is in 1 to $|t|$ correspondence with the intersection points $\{x_{k}, \textbf{z}\}\in \widehat{\operatorname {CFK}}(S_{t}^{3}(K), \mu_{K})$ for $\left\lfloor {-|t|}/{2}\right\rfloor\leq k \leq \left\lfloor {|t|}/{2}\right\rfloor$ ($k\neq 0$) in the winding region.

We recall a theorem from Hedden in \cite{MR2372849}. 

\begin{theo}[Theorem 4.3 in \cite{MR2372849}]
\label{help}
Let $K\subset S^{3}$ be a knot. There exists an integer $ N>0$ such that for $t>N$ the following hold for all $m$.
\begin{align*}
\widehat{\operatorname {HFK}}_{\ast}(S_{t}^{3}(K), \mu_{K}, s_{m})&\cong \widehat{\operatorname {HF}}_{\ast+d_{-}(m)}(F(K,m))\bigoplus \widehat{\operatorname {HF}}_{\ast-2m+d_{-}(m)}(F(K,-m-1)),\\
\widehat{\operatorname {HFK}}_{\ast}(S_{-t}^{3}(K), \mu_{K}, s_{m})&\cong \widehat{\operatorname {HF}}_{\ast-d_{+}(m)}\left(\frac{\widehat{CF}(S^{3})}{F(K,m)}\right)\bigoplus \widehat{\operatorname {HF}}_{\ast-2m-d_{+}(m)}\left(\frac{\widehat{CF}(S^{3})}{F(K,-m-1)}\right),
\end{align*}
where $d_{\pm}(m)=({t-(2m\pm t)^{2}})/{4t}$.
\end{theo}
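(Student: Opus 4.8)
The plan is to prove the theorem by a direct analysis of the winding-region Heegaard diagram $\operatorname{HD}(S_t^3(K), \mu_K)$, refining Ozsv\'ath--Szab\'o's computation of $\widehat{\operatorname{HF}}$ of large surgeries so as to keep track of the filtration induced by the dual knot $\mu_K$. First I would record the generator correspondence already set up in the excerpt: each generator $\{x_0, \textbf{z}\}$ of $\widehat{\operatorname{CFK}}(S^3, K)$ produces $|t|$ generators $\{x_k, \textbf{z}\}$ of $\widehat{\operatorname{CFK}}(S_t^3(K), \mu_K)$, one for each intersection point $x_k$ created in the winding region $\mu_K\times I$. The first real step is to compute the ambient ${\rm Spin}^{c}$ grading and the $\mu_K$-Alexander grading of $\{x_k, \textbf{z}\}$ in terms of $k$ and of the Alexander grading of $\textbf{z}$ in $\widehat{\operatorname{CFK}}(S^3, K)$. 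Reading off the local multiplicities at $z''$ and $w'$ of the small disks joining adjacent points $x_k$ and $x_{k+1}$ in the winding region shows how consecutive indices step through the ${\rm Spin}^{c}$ structures $s_m$ and how the filtration level changes, which is exactly the data needed to distribute the generators.

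With the gradings in hand I would fix a ${\rm Spin}^{c}$ structure $s_m$. Since $\widehat{\operatorname{HFK}}(S_t^3(K), \mu_K, s_m)$ is the homology of the associated graded of the $\mu_K$-filtration, the generators lying in $s_m$ separate into two families according to their $\mu_K$-Alexander grading --- geometrically, according to their side of the basepoint $z''$ in the winding region --- and only filtration-preserving differentials survive. The heart of the argument is to control these differentials for $t\gg 0$: I would show that a Whitney disk with $n_{w'}=0$ joining two generators of $s_m$ either avoids the winding region, in which case it is a disk in $\operatorname{HD}(S^3, K)$ respecting the knot filtration, or is a short disk supported in the region that can be listed explicitly, whereas ``long'' disks wrapping around the region are ruled out once $t$ exceeds a threshold $N$ by an area argument. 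This turns each family into a truncation of the filtered complex $\widehat{\operatorname{CF}}(S^3)$: for $t>0$ one family is identified with the subcomplex $F(K,m)$, and the other, via the symmetry of knot Floer homology under $A\mapsto -A$, with $F(K,-m-1)$. For $t<0$ the same analysis applies after interchanging the two basepoints, so the subcomplexes are replaced by the quotient complexes $\widehat{\operatorname{CF}}(S^3)/F(K,m)$ and $\widehat{\operatorname{CF}}(S^3)/F(K,-m-1)$.

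It remains to pin down the absolute Maslov gradings. Each identification is realized by a distinguished Whitney disk whose Maslov index I would evaluate with the combinatorial formula of Theorem \ref{lip}; tracking this index through the winding region produces the overall shifts $d_\pm(m) = (t-(2m\pm t)^2)/4t$, which are precisely the grading-shift corrections attached to the two-handle cobordism of the surgery, while the relative shift $-2m$ between the two families is the offset dictated by the $A\mapsto -A$ symmetry.

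The main obstacle will be the disk-counting step for $t\gg 0$. One must argue rigorously that, after passing to the associated graded, the surviving differential in each ${\rm Spin}^{c}$ structure is genuinely the restriction of the filtered differential of $\widehat{\operatorname{CF}}(S^3)$, with no extra contributions from disks threading the winding region, and that a single threshold $N$ works simultaneously for all $m$. Carrying out this count in tandem with the verification of the grading shifts $d_\pm(m)$ is the technical core; the generator bookkeeping and the appeal to the filtration symmetry are by comparison routine.
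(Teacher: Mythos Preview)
The paper does not supply a proof of this statement at all: it is quoted verbatim as ``Theorem 4.3 in \cite{MR2372849}'' and simply invoked as a black box in the subsequent arguments. So there is no proof in the paper against which to compare your proposal.

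That said, your sketch is a reasonable outline of how Hedden's argument actually proceeds in \cite{MR2372849}: the winding-region correspondence, the separation of generators in a fixed ${\rm Spin}^c$ structure into two families on either side of the basepoint, the identification of the surviving differentials with those of $F(K,m)$ (or the quotient) once $t$ is large, and the computation of the grading shifts $d_{\pm}(m)$ via the cobordism formula. The one point where your description is slightly off is the source of the threshold $N$: it is not primarily an ``area argument'' ruling out long disks, but rather the requirement that every ${\rm Spin}^c$ structure be represented by generators supported in the winding region, so that the non-winding generators can be filtered away; this is what forces $t$ to exceed roughly $2g(K)$ plus the number of exterior intersections. For the purposes of the present paper, however, none of this needs to be reproduced---you should simply cite Hedden's result as the author does.
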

 
The element $s_{m}$ satisfies a certain condition related to $m$, as stated in \cite{MR2372849} before Theorem 4.3, which will not be recalled here. 

Proposition \ref{identity} and Theorem \ref{help} provide us a relation between $\widehat{\operatorname {HFK}}(S^{3}, K_{t}^{P_{r}}, r+1)$ and the filtered chain homotopy type of $\widehat{\operatorname {CFK}}(S^{3}, K)$, but no information about the Maslov grading is mentioned. In order to solve the problem, we first restate Lemmas~4.5 and 4.6 in \cite{MR2372849}, which study the Maslov gradings of some generators. We modify them into our context as follows.

\begin{lemma}
\label{gradingrelation}
Consider the chain complex $\widehat{\operatorname {CFK}}(S^{3}, K_{t}^{P_{r}})$ for any $r\geq 0$ and $t\neq 0$. Let $\left\lfloor {-|t|}/{2}\right\rfloor\leq k < 0$ and $0 <l \leq  \left\lfloor {|t|}/{2}\right\rfloor$, and $N$ be the integer introduced in Theorem~\ref{help}. For $t>N>0$ we have
\begin{align*}
{\rm gr}(\{x, a_{f(0)}\}\times \{x_{k}, \textbf{z}\})&={\rm gr}(\{x, y_{1}\}\times \{x_{0}, \textbf{z}\})+2A(\{x_{0}, \textbf{z}\})+1,\\
{\rm gr}(\{x, a_{f(0)}\}\times \{x_{l}, \textbf{z}\})&={\rm gr}(\{x, y_{1}\}\times \{x_{0}, \textbf{z}\})+1,
\end{align*}
while for $t<-N<0$, we have
\begin{align*}
{\rm gr}(\{x, a_{f(0)}\}\times \{x_{k}, \textbf{z}\})&={\rm gr}(\{x, y_{1}\}\times \{x_{0}, \textbf{z}\})+2A(\{x_{0}, \textbf{z}\}),\\
{\rm gr}(\{x, a_{f(0)}\}\times \{x_{l}, \textbf{z}\})&={\rm gr}(\{x, y_{1}\}\times \{x_{0}, \textbf{z}\}).
\end{align*}
Here $A(\{x_{0}, \textbf{z}\})$ denotes the Alexander grading of $\{x_{0}, \textbf{z}\}$ in $\widehat{\operatorname {CFK}}(S^{3}, K)$.
\end{lemma}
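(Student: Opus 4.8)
The plan is to read off each Maslov grading difference from an explicit domain connecting the two generators, using the grading formula $\operatorname{gr}(x)-\operatorname{gr}(y)=\mu({\cal D})-2n_w({\cal D})$ together with Lipshitz's index formula $\mu({\cal D})=e({\cal D})+n_x({\cal D})+n_y({\cal D})$ of Theorem \ref{lip}. Each pair compared here, namely $\{x,a_{f(0)}\}\times\{x_k,\textbf{z}\}$ (respectively $\{x_l,\textbf{z}\}$) against $\{x,y_1\}\times\{x_0,\textbf{z}\}$, differs in its $\Sigma_2$ coordinate (the pattern point $a_{f(0)}$ versus $y_1$) and in its $\Sigma_g$ coordinate (a winding point versus $x_0$). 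I would build a connecting domain as a sum of three pieces: a domain in $\Sigma_2$ from $y_1$ to $a_{f(0)}$, the base triangle $\Delta_{\theta,x_0,x_1}$ of Figure \ref{fig:f7} in $\Sigma_g$, and a domain supported in the winding region $\mu_K\times I$ from $x_1$ to $x_k$ or $x_l$. Since $e$, the point measures and the basepoint multiplicities are additive, the grading difference is the sum of the three local contributions.

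Second I would compute the winding-region piece, which is insensitive to $r$ and is precisely the object analyzed in Hedden's Lemmas~4.5 and 4.6. For a negative index $k$ the domain from $x_1$ to $x_k$ must cross the remainder of the diagram $\operatorname{HD}(S^3,K)$, so its local multiplicities at $z$ and $w$ compute the Alexander grading of the companion generator through $n_z-n_w=A(\{x_0,\textbf{z}\})$, and Lipshitz's formula turns this into the term $2A(\{x_0,\textbf{z}\})$ in the two equations for $x_k$; for a positive index $l$ the corresponding domain stays local to the winding region and contributes no Alexander term. The hypothesis $t>N$ (resp. $t<-N$) from Theorem \ref{help} is what guarantees that all $|t|$ winding points are present and that these local domains take the uniform shape the computation requires.

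Third I would evaluate the remaining, index-independent constant. The $\Sigma_2$ domain from $y_1$ to $a_{f(0)}$ is assembled from the pieces already appearing in the proof of Lemma \ref{where}(iii) (the triangle in $\Sigma_2$ with vertices $y_j,\theta',a_j$ and the domain $D_{y_j,\theta',a_j}$) and is independent of the sign of $t$. The base triangle $\Delta_{\theta,x_0,x_1}$, on the other hand, has two distinct shapes according to the sign of $t$, as drawn in Figure \ref{fig:f7}, so its Euler measure and point measures differ between the two cases; feeding this difference through Lipshitz's formula produces the overall shift $+1$ when $t>N>0$ and $+0$ when $t<-N<0$. Assembling the three contributions gives all four displayed equations.

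The main obstacle I anticipate is the local bookkeeping across the connected-sum tube and inside the winding region: one must simultaneously pin down the Euler measures, the four-region averages $n_{x_i}({\cal D})$ at each coordinate, and the multiplicities of $z$, $w$, $z''$ and $w'$, and carry this out for both signs of $t$ and for both the negative- and the positive-index winding points. The delicate step is confirming that the winding-region contribution reproduces exactly the Alexander grading $A(\{x_0,\textbf{z}\})$, and not some shifted or rescaled variant; this is where the large-$|t|$ hypothesis and the placement of the displaced basepoint $z''$ genuinely enter.
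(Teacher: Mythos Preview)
The paper does not prove this lemma at all: it is introduced explicitly as a restatement of Lemmas~4.5 and~4.6 of \cite{MR2372849}, with only the notation adapted to the present Heegaard diagram, and no argument is supplied. So there is nothing to compare your proof against in this paper; your outline is essentially a sketch of Hedden's original argument, which is indeed how one establishes these grading identities.

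Your overall strategy---building an explicit domain from the $\Sigma_2$ piece, the triangle $\Delta_{\theta,x_0,x_1}$, and a winding-region strip, then applying Lipshitz's formula---is the correct one and is exactly what underlies Hedden's computation. One small point of care: you write that the $\Sigma_2$ domain from $y_1$ to $a_{f(0)}$ is ``independent of the sign of $t$'' while the triangle $\Delta_{\theta,x_0,x_1}$ carries the sign dependence. In fact, as the proof of Lemma~\ref{where}(iii) already shows, the natural $\Sigma_2$ domain one writes down also changes form with the sign of $t$ (one lands on $a_{-j}$ versus $a_{j}$, and the latter case uses the more complicated $D_{y_j,\theta',a_j}$), and these two sign-dependent pieces must be combined consistently across the tube. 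This does not affect the validity of your plan, but the bookkeeping of where the ``$+1$ versus $+0$'' comes from is spread across both pieces rather than isolated in the triangle alone. The large-$|t|$ hypothesis and the role of $z''$ that you flag are precisely the issues Hedden handles, and your identification of the winding-region contribution with $2A(\{x_0,\textbf z\})$ is the heart of the matter.
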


In the Heegaard diagram $\operatorname{HD}(S^{3}, K_{t}^{P_{i}})$ for $i\geq 1$, there is a Whitney disk $\phi_{i}$ connecting $\{x, a_{f(i-1)}\}\times \textbf{p}$ to some generator $\{x, a_{*}\}\times \textbf{p}$ for any $\textbf{p}\in \widehat{\operatorname {CFK}}(S_{t}^{3}(K), \mu_{K})$. See the shadowed domain in Figure \ref{fig:f17}. We first check that $a_{*}=a_{f(i)}$. Notice that $n_{z}(\phi_{i})=1$ and $n_{w}(\phi_{i})=0$, so we get $$A(\{x, a_{*}\}\times \textbf{p})=A(\{x, a_{f(i-1)}\}\times \textbf{p})+1.$$ The Alexander gradings of the generators $\{x, a_{j}\}\times \textbf{p}$ for $j\in \{\pm 1, \cdots, \pm p_{i-1}\}$ in $\widehat{\operatorname {CFK}}(S^{3}, K_{t}^{P_{i-1}})$ do not change when we regard these generators as in $\widehat{\operatorname {CFK}}(S^{3}, K_{t}^{P_{i}})$. Therefore we have $$A(\{x, a_{*}\}\times \textbf{p})=i+1.$$ The discussion in Section 3.1 tells us $a_{f(i)}$ is the unique point in $\beta_{i}\cap \mu_{P_{i}}$ corresponding to the Alexander grading $i+1$, so $*=f(i)$. 

Consider the Maslov grading.
By Lipshitz's formula (refer to Theorem \ref{lip}), we have $$\mu(\phi_{i})=e(\phi_{i})+n_{\{x, a_{f(i-1)}\}}(\phi_{i})+n_{\{x, a_{f(i)}\}}(\phi_{i}).$$ Here since $\phi_{i}$ can be cut into a hexagon (see Figure \ref{fig:f17}), by Formula (\ref{polygon}) we see $$e(\phi_{i})=1-\frac{3}{2}=-\frac{1}{2}.$$ It is easy to see from Figure \ref{fig:f17} that $$n_{\{x, a_{f(i-1)}\}}(\phi_{i})=\frac{3}{4} \text{\quad and \quad } n_{\{x, a_{f(i)}\}}(\phi_{i})=\frac{3}{4}.$$ Therefore we get $\mu(\phi_{i})=-{1}/{2}+{3}/{4}+{3}/{4}=1$. Since $\mu(\phi_{i})=1$ we get the equation
\begin{equation}
\label{haha}
{\rm gr}(\{x, a_{f(i)}\}\times \{x_{k}, \textbf{z}\})-{\rm gr}(\{x, a_{f(i-1)}\}\times \{x_{k}, \textbf{z}\})=1.
\end{equation}

Notice that Equation (\ref{haha}) does not change when we regard $\{x, a_{f(i)}\}\times \{x_{k}, \textbf{z}\}$ and $\{x, a_{f(i-1)}\}\times \{x_{k}, \textbf{z}\}$ as generators of $\widehat{\operatorname {CFK}}(S^{3}, K_{t}^{P_{r}})$ for any $r> i$. 
Therefore, applying Equation (\ref{haha}) $r$ times gives:
\begin{equation*}
{\rm gr}(\{x, a_{f(r)}\}\times \{x_{k}, \textbf{z}\})-{\rm gr}(\{x, a_{f(0)}\}\times \{x_{k}, \textbf{z}\})=r.
\end{equation*}
This together with Lemma \ref{gradingrelation} implies the following equations. 
Let $\left\lfloor {-|t|}/{2}\right\rfloor\leq k < 0$ and $0 <l \leq  \left\lfloor {|t|}/{2}\right\rfloor$ and $N$ be the integer in Lemma \ref{gradingrelation}. For $t>N>0$, we have
\begin{equation}
\label{relation1} 
\begin{split}
{\rm gr}(\{x, a_{f(r)}\}\times \{x_{k}, \textbf{z}\})&={\rm gr}(\{x, y_{1}\}\times \{x_{0}, \textbf{z}\})+2A(\{x_{0}, \textbf{z}\})+r+1,\\
{\rm gr}(\{x, a_{f(r)}\}\times \{x_{l}, \textbf{z}\})&={\rm gr}(\{x, y_{1}\}\times \{x_{0}, \textbf{z}\})+r+1,
\end{split}
\end{equation}
while for $t<-N<0$, we have
\begin{equation}
\label{relation2}
\begin{split}
{\rm gr}(\{x, a_{f(r)}\}\times \{x_{k}, \textbf{z}\})&={\rm gr}(\{x, y_{1}\}\times \{x_{0}, \textbf{z}\})+2A(\{x_{0}, \textbf{z}\})+r,\\
{\rm gr}(\{x, a_{f(r)}\}\times \{x_{l}, \textbf{z}\})&={\rm gr}(\{x, y_{1}\}\times \{x_{0}, \textbf{z}\})+r. 
\end{split}
\end{equation}

\begin{figure}
	\centering
		\includegraphics[width=1.0\textwidth]{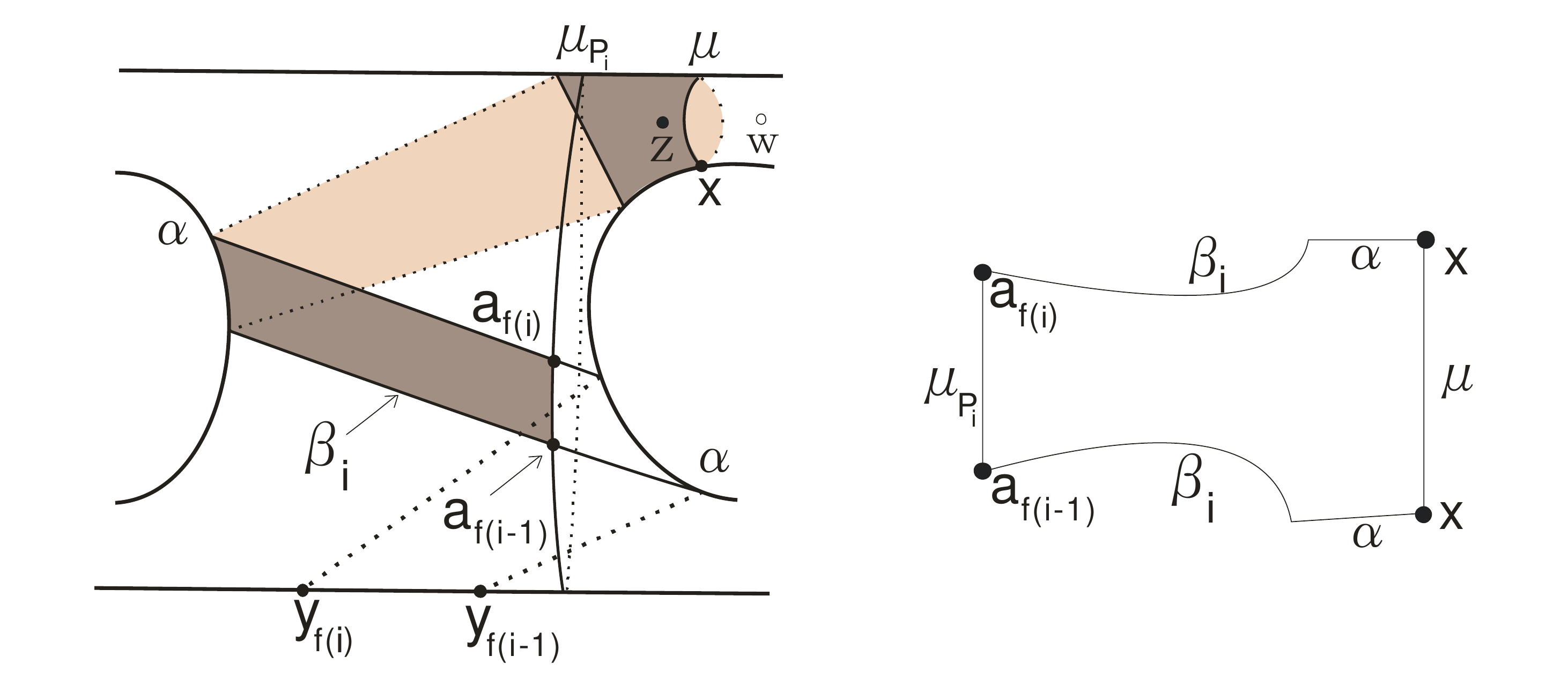}
	\caption{The left-hand figure illustrates the domain $\phi_{i}$ from $a_{f(i)}$ to $a_{f(i-1)}$. The right-hand figure is a hexagon obtained by cutting $\phi_{i}$ along the curve $\alpha$.}
		\label{fig:f17}
\end{figure}

Equations (\ref{relation1}) and (\ref{relation2}) allow us to obtain a similar result to Theorem 4.2 in \cite{MR2372849}. Before stating the result, we prove a lemma.

Recall that any two Heegaard diagrams which specify the same 3-manifold can be connected by a finite sequence of Heegaard moves, which consist of isotopies, handlesides and (de)stablizations. See \cite{MR1501673} for an introduction. In the following lemma, we forget the knots and simply regard $\operatorname{HD}(S^{3}, K_{t}^{P_{r}})$ and $\operatorname{HD}(S^{3}, K)$ as two Heegaard diagrams for $S^{3}$.
\begin{lemma}
\label{moves}
The Heegaard diagram $\operatorname{HD}(S^{3}, K_{t}^{P_{r}})$ can be converted into the Heegaard diagram $\operatorname{HD}(S^{3}, K)$ for $(S^{3}, K)$, by applying a finite sequence of Heegaard moves in the complement of $w$.
\end{lemma}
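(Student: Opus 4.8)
The plan is to exhibit an explicit finite sequence of Heegaard moves, all supported in the pattern region of $\operatorname{HD}(S^{3}, K_{t}^{P_{r}})$ and hence kept away from $w$, that removes the two extra handles $h_{AD}$ and $h_{BC}$ carried by the genus-two piece $\Sigma_{2}$. The target configuration is transparent from the description of the two diagrams: after the moves I want the $\boldsymbol{\alpha}$-curves $\alpha$ and $\lambda_{P_{r}}\sharp\lambda_{K}$ deleted, the curve $\mu_{P_{r}}\sharp\mu_{K}$ turned into $\mu_{K}$, and the $\boldsymbol{\beta}$-curves $\mu$ and $\beta_{r}$ deleted, while $\alpha_{1},\dots,\alpha_{g-1}$ and $\beta_{1},\dots,\beta_{g}$, which live in the $\Sigma_{g}$-part away from the pattern, remain fixed. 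If I can produce two canceling pairs realizing these deletions, two destabilizations carry $\Sigma_{g+2}$ down to $\Sigma_{g}$ with exactly the companion curves surviving.

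The hard part, and the first step I would carry out, is that $\beta_{r}$ is heavily linked with the two band-summed curves: as recorded in Section 3.1 it meets $\mu_{P_{r}}\sharp\mu_{K}$ in the $2p_{r}$ points $\{a_{j}\}$ and $\lambda_{P_{r}}\sharp\lambda_{K}$ in the $q_{r}$ points $\{y_{i}\}$, so $\beta_{r}$ cannot belong to any canceling pair until these intersections are cleared. I would clear them by handlesliding $\beta_{r}$ across the cocore $\mu$ of $h_{BC}$: each slide adds a parallel copy of $\mu$ and cancels intersection points in the paired fashion exhibited by the bigons and domains of Figures \ref{fig:f8} and \ref{fig:f20}. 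The aim is to leave $\beta_{r}$ disjoint from $\mu_{P_{r}}\sharp\mu_{K}$ and meeting $\lambda_{P_{r}}\sharp\lambda_{K}$ in a single transverse point; the residual intersection cannot be reduced below one because the algebraic intersection number is forced to be $\pm 1$ by the fact that the whole diagram presents $S^{3}$. I expect organizing and ordering these slides so that $\beta_{r}$ ends up genuinely clean, using the rational-tangle geometry of $(B^{3},S_{r})$ encoded in the continued fraction $p_{r}/q_{r}$, to be the main obstacle of the proof.

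With $\beta_{r}$ cleaned I would perform the two destabilizations. First, the cocore $\mu$ meets $\alpha$ in one point and, from the local pictures in Figures \ref{fig:f2} and \ref{fig:f14}, is disjoint from the remaining $\boldsymbol{\alpha}$-curves, while $\alpha$ is disjoint from the remaining $\boldsymbol{\beta}$-curves; hence $(\alpha,\mu)$ is a canceling pair and destabilizing along it removes $h_{BC}$ and drops the genus to $g+1$. On the resulting once-punctured torus, which now carries $\beta_{r}$ together with the images of $\lambda_{P_{r}}$ and $\mu_{P_{r}}$, the pair $(\lambda_{P_{r}}\sharp\lambda_{K},\beta_{r})$ meets exactly once and is disjoint from every other curve, so I destabilize a second time, removing $h_{AD}$ and reaching genus $g$.

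It remains to read off the outcome and to confirm the basepoint constraint. The two destabilizations delete $\alpha$, $\lambda_{P_{r}}\sharp\lambda_{K}$, $\mu$ and $\beta_{r}$ and collapse $\Sigma_{2}$ to a disk; the meridian $\mu_{P_{r}}$ then bounds a disk in the collapsed surface, so the band sum $\mu_{P_{r}}\sharp\mu_{K}$ isotopes to $\mu_{K}$, while $\alpha_{1},\dots,\alpha_{g-1}$ and $\beta_{1},\dots,\beta_{g}$ are untouched. The resulting data is precisely $\operatorname{HD}(S^{3}, K)$. Finally, every isotopy, handleslide and destabilization above is supported in the pattern region and can be guided by domains with $n_{w}=0$, exactly the property established for $\mathcal{Q}_{r}$ in Lemma \ref{periodic} and for the bigons in Lemma \ref{where}; consequently no move ever pushes a curve across $w$, which is what the lemma demands.
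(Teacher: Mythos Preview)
Your high-level strategy---simplify $\beta_{r}$ by handlesliding over $\mu$, then destabilize along the pairs $(\alpha,\mu)$ and $(\lambda_{P_{r}}\sharp\lambda_{K},\beta_{r})$---matches the paper's, and the two cancelling pairs you identify are exactly the ones used there. However, the justification of the basepoint constraint has a genuine gap.

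You write that the moves are ``supported in the pattern region of $\operatorname{HD}(S^{3}, K_{t}^{P_{r}})$ and hence kept away from $w$,'' but this is backwards: the basepoint $w$ lies in $\Sigma_{2}$, i.e.\ in the pattern region itself, near a foot of one of the attached one-handles. Confining the moves to $\Sigma_{2}$ therefore does nothing to protect $w$. Your subsequent appeal to ``domains with $n_{w}=0$'' from Lemmas~\ref{periodic} and~\ref{where} does not repair this: those statements concern local multiplicities of two-chains used for grading computations, not the tracks of isotopies or handleslide bands, and there is no general principle converting $n_{w}(\mathcal{D})=0$ for some domain into an isotopy of a curve that avoids $w$. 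The sentence ``I expect organizing and ordering these slides \ldots\ to be the main obstacle of the proof'' is candid, but it is precisely here that an argument is needed.

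The paper supplies exactly the missing geometric control. Rather than a sequence of handleslides organized by the continued fraction, it performs a \emph{single} handleslide of $\beta_{r}$ over $\mu$ and then isotopes $\beta_{r}$ by pushing the splitting disk $D_{r}\subset B^{3}$ (the properly embedded disk bounded by $\beta_{r}$ that separates the two strings of the rational tangle) down into $\partial B^{3}$. Because $D_{r}$ is disjoint from the tangle strings, this isotopy of $\beta_{r}=\partial D_{r}$ is forced to stay away from the feet $A,B,C,D$ of the one-handles and in particular from $w$; the paper says explicitly that tracking $D_{r}$ is what guarantees the isotopy occurs in the complement of $w$. After this, $\beta_{r}$ is in standard position and the two destabilizations you named finish the job. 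Your argument would be completed by replacing the unspecified slide sequence with this splitting-disk isotopy, or by providing an equally explicit description of the bands and isotopy tracks showing they miss $w$.
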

\begin{proof}
The process converting $\operatorname{HD}(S^{3}, K_{t}^{P_{r}})$ to $\operatorname{HD}'(S^{3}, K)$ is shown in Figure \ref{fig:f19}. Figure~\ref{fig:f19}~(1) shows the Heegaard diagram $\operatorname{HD}(S^{3}, K_{t}^{P_{r}})$, and the curve $\beta_{r}$ bounds a properly embedded disk, say $D_{r}$, in $B^{3}$, which is a splitting disk of the tangle $(B^{3}, S_{r})$. We isotope $D_{r}$ to the position shown in Figure~\ref{fig:f19}~(2) where except the folded region, the disk $D_{r}$ is embedded in $S^{2}=\partial B^{3}$. Then we make a handleslide move of $\beta_{r}$ along $\mu$ as shown in Figure~\ref{fig:f19}~(3). Then isotopy moves change $\beta_{r}$ to the curve in Figure~\ref{fig:f19}~(4). Further isotopy moves occuring in $S^{2}$ change $\beta_{r}$ to the curve shown in Figure~\ref{fig:f19}~(5). The reason we consider the disk $D_{r}$ is to guarantee that the isotopy moves from Figure~\ref{fig:f19}~(4) to Figure~\ref{fig:f19}~(5) occur in the complement of $w$ in $\Sigma_{2}$. The Heegaard moves from Figure~\ref{fig:f19}~(6) to Figure~\ref{fig:f19}~(7) are shown in Figure \ref{fig:f19}. Then we apply two destablizations to the diagram of Figure~\ref{fig:f19}~(7), the first one is between $\mu$ and $\alpha$, and the second one is between $\beta_{r}$ and $\lambda_{P_{r}}$. The final diagram, as shown in Figure~\ref{fig:f19}~(8), is the Heegaard diagram $\operatorname{HD}(S^{3}, K)$. The whole process from $\operatorname{HD}(S^{3}, K_{t}^{P_{r}})$ to $\operatorname{HD}(S^{3}, K)$ is in the complement of $w$.
\end{proof}

\begin{figure}
	\centering
		\includegraphics{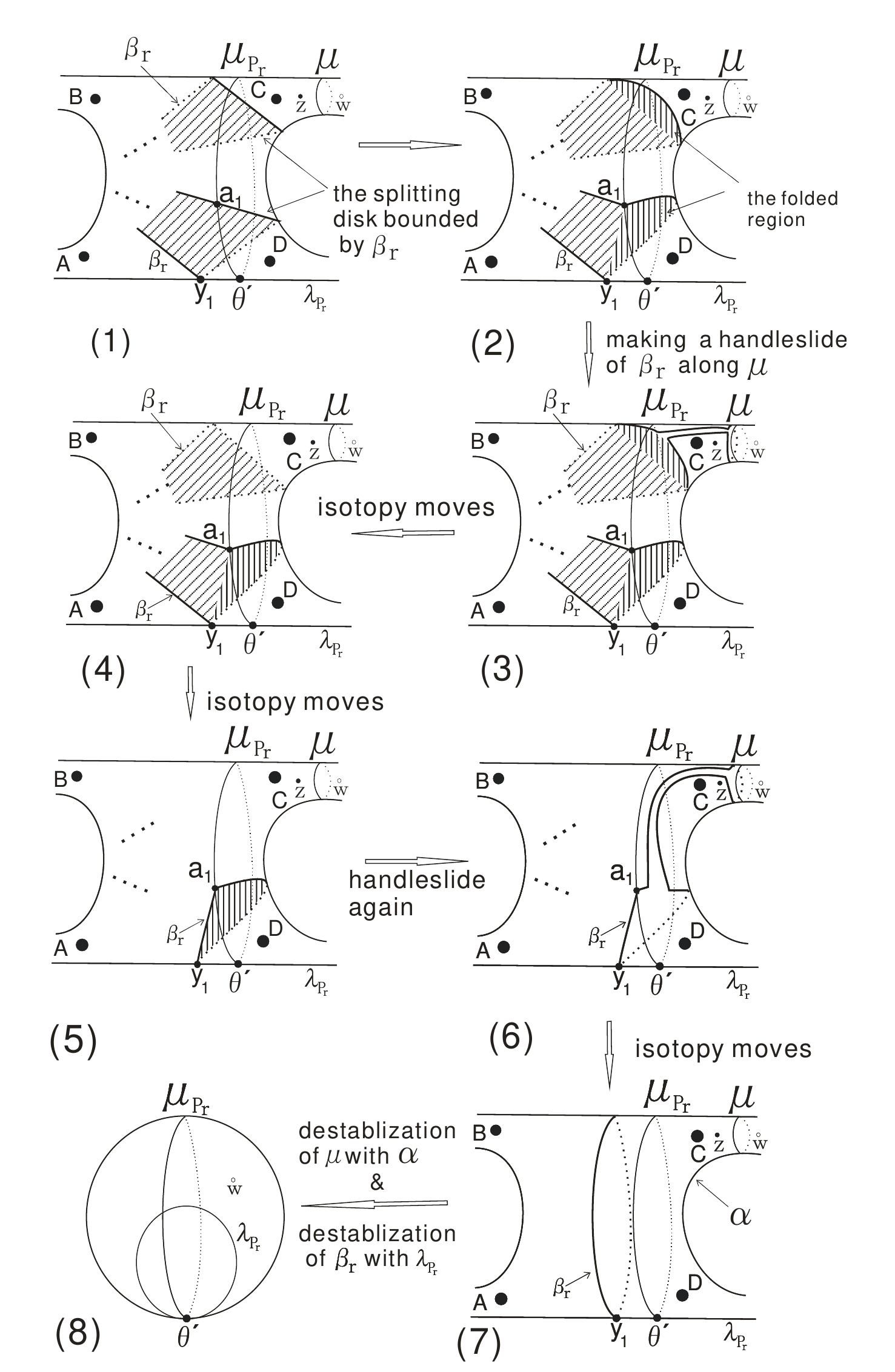}
	\caption{The Heegaard moves from $\operatorname{HD}(S^{3}, K_{t}^{P_{r}})$ to $\operatorname{HD}(S^{3}, K)$.}
	\label{fig:f19}
\end{figure}

Now we show a lemma, which connects $\widehat{\operatorname {HFK}}(S^{3}, K_{t}^{P_{r}}, r+1)$ to the filtered chain homotopy type of $\widehat{\operatorname {CFK}}(S^{3}, K)$.
\begin{lemma}
\label{keytheorem}
For all $t>N>0$, where $N$ is defined as before, there are isomorphisms of ${\mathbb Z}$-graded Abelian groups:
$$\widehat{\operatorname {HFK}}_{\ast}(S^{3}, K_{t}^{P_{r}}, r+1)\cong \bigoplus ^{\left\lfloor {t}/{2}\right\rfloor}_{m=\left\lfloor {-t}/{2}+1\right\rfloor}\left[\widehat{\operatorname {HF}}_{\ast-r-1}(F(K,m))\bigoplus \widehat{\operatorname {HF}}_{\ast-r-1}(F(K,-m-1))\right],$$
$$\widehat{\operatorname {HFK}}_{\ast}(S^{3}, K_{-t}^{P_{r}}, r+1)\cong \bigoplus ^{\left\lfloor {t}/{2}\right\rfloor}_{m=\left\lfloor {-t}/{2}+1\right\rfloor}\left[\widehat{\operatorname {HF}}_{\ast-r}\left(\frac{\widehat{\operatorname {CF}}(S^{3})}{F(K,m)}\right)\bigoplus \widehat{\operatorname {HF}}_{\ast-r}\left(\frac{\widehat{\operatorname {CF}}(S^{3})}{F(K,-m-1)}\right)\right].$$
\end{lemma}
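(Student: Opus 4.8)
The plan is to assemble the statement from three ingredients already in hand: the chain-level identification of Proposition~\ref{identity}, Hedden's surgery computation Theorem~\ref{help}, and the absolute Maslov grading data recorded in Equations~(\ref{relation1}) and (\ref{relation2}) together with Lemma~\ref{moves}. First I would settle the underlying graded group. By Proposition~\ref{identity}, the top Alexander grading piece satisfies
$$\widehat{\operatorname{HFK}}(S^{3}, K_{t}^{P_{r}}, r+1)\cong \bigoplus_{s_{m}\in {\rm Spin}^{c}(S_{t}^{3}(K))}\widehat{\operatorname{HFK}}(S_{t}^{3}(K), \mu_{K}, s_{m}),$$
and there are exactly $|t|$ summands, which I reindex by the integer $m$ running over $\lfloor -t/2+1\rfloor\le m\le \lfloor t/2\rfloor$. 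Feeding each summand into Theorem~\ref{help} rewrites the right-hand side, for $t>N$, as $\bigoplus_{m}[\widehat{\operatorname{HF}}(F(K,m))\oplus \widehat{\operatorname{HF}}(F(K,-m-1))]$, and for $-t$ as the corresponding sum of homologies of the quotient complexes $\widehat{\operatorname{CF}}(S^{3})/F(K,m)$ and $\widehat{\operatorname{CF}}(S^{3})/F(K,-m-1)$. This already produces the claimed isomorphisms as ungraded, indeed relatively graded, Abelian groups, so the whole remaining issue is the absolute Maslov grading.

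For the grading I would argue as follows. The correspondence of Proposition~\ref{identity} sends $\{x, a_{f(r)}\}\times\textbf{q}$ to $\textbf{q}$ and carries the differential to the differential, so inside each ${\rm Spin}^{c}$ class $s_{m}$ it is an isomorphism of \emph{relatively} graded complexes; hence the $K_{t}^{P_{r}}$-grading and the $S_{t}^{3}(K)$-grading of corresponding generators differ by a constant $C(m)$ that depends only on $m$. I determine $C(m)$ by computing the $K_{t}^{P_{r}}$-side grading directly. Restricting to the winding region, the relevant generators are $\{x, a_{f(r)}\}\times\{x_{k}, \textbf{z}\}$, and Equations~(\ref{relation1}), (\ref{relation2}) express their Maslov gradings through ${\rm gr}(\{x, y_{1}\}\times\{x_{0}, \textbf{z}\})$ and $A(\{x_{0}, \textbf{z}\})$. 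By Lemma~\ref{moves} the diagrams $\operatorname{HD}(S^{3}, K_{t}^{P_{r}})$ and $\operatorname{HD}(S^{3}, K)$ are related by Heegaard moves in the complement of $w$, so the absolute Maslov grading of $\{x, y_{1}\}\times\{x_{0}, \textbf{z}\}$ in $\widehat{\operatorname{CF}}(S^{3})$ agrees with that of $\{x_{0}, \textbf{z}\}$ in $\widehat{\operatorname{CFK}}(S^{3}, K)$. Anchoring (\ref{relation1}) in this way, the positive-index generators, which feed the $F(K,m)$ summand, sit at grading $r+1$ above the grading of $\{x_{0}, \textbf{z}\}$, and the negative-index ones, feeding $F(K,-m-1)$, sit at $r+1+2A(\{x_{0}, \textbf{z}\})$ above it. Comparing these against the grading shifts $d_{-}(m)$ and $-2m+d_{-}(m)$ that the two summands carry in Theorem~\ref{help} pins down $C(m)=r+1+d_{-}(m)$ and, crucially, shows that this single constant works for both halves; this is exactly what collapses the $m$-dependent, summand-dependent shifts of Theorem~\ref{help} into the uniform shift $r+1$ of the statement. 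The $-t$ case is identical, using the second lines of (\ref{relation1})/(\ref{relation2}) and the quotient-complex formula, and there the uniform shift comes out as $r$.

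The step I expect to be the genuine obstacle is precisely this grading reconciliation, and in particular the extra term $2A(\{x_{0}, \textbf{z}\})$ attached to the negative-index generators. One must verify that the constant $C(m)$ forced by the positive-index half is consistent on the negative-index half, which requires keeping careful track of which winding-region intersection point lies in which ${\rm Spin}^{c}$ structure and in which of the two subcomplexes, and it is here that the conjugation symmetry of the filtered complex $\widehat{\operatorname{CFK}}(S^{3}, K)$ must be invoked to reconcile the $2A(\{x_{0}, \textbf{z}\})$ shift with the inherited grading on $F(K,-m-1)$. Once this bookkeeping is settled, the differentials need no further work: by Proposition~\ref{identity} the Whitney disks contributing to the differential on the top grading piece are exactly those supported in $\Sigma_{g}$, so both sides are computed by the same complex and the graded isomorphism follows, completing the proof.
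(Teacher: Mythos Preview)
Your proposal is correct and follows essentially the same route as the paper: combine Proposition~\ref{identity} with Theorem~\ref{help}, and fix the absolute Maslov grading via Lemma~\ref{moves} (giving ${\rm gr}(\{x,y_{1}\}\times\textbf{p})={\rm gr}(\textbf{p})$) together with Equations~(\ref{relation1}) and (\ref{relation2}); the paper simply states that from this point the argument is identical to Hedden's proof of \cite[Theorem~4.4]{MR2372849}, with the shift $r+1$ (respectively $r$) replacing Hedden's $1$ (respectively $0$). One remark: you anticipate needing the conjugation symmetry of $\widehat{\operatorname{CFK}}(S^{3},K)$ to reconcile the $2A(\{x_{0},\textbf{z}\})$ term on the negative-index half, but in fact this is unnecessary---the shift $-2m$ already built into the second summand of Theorem~\ref{help} matches the $2A(\{x_{0},\textbf{z}\})$ directly once one tracks which ${\rm Spin}^{c}$ class a given winding-region point lies in, so the consistency of $C(m)$ across both halves is pure bookkeeping rather than an appeal to symmetry.
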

\begin{proof}
The proof goes completely in the same way as that of \cite[Theorem 4.4]{MR2372849}, except for minor differences in the grading shifts on the right-hand side of the equations. As stated in Lemma \ref{moves}, by a sequence of Heegaard moves, each of which happens in the complement of the basepoint $w$, we can convert the Heegaard diagram $\operatorname{HD}(S^{3}, K_{t}^{P_{r}})$ to the Heegaard diagram $\operatorname{HD}(S^{3}, K)$. The Maslov grading of any generator unaffacted by the Heegaard moves, is unchanged throughout the process. It follows that a generator of the form
$$\{x, y_{1}\}\times \textbf{p} \in \{x, y_{1}\}\times\widehat{\operatorname {CFK}}(S^{3}, K)$$ in $\operatorname{HD}(S^{3}, K_{t}^{P_{r}})$ inherits the Maslov grading of $\textbf{p}\in \widehat{\operatorname {CFK}}(S^{3}, K)$ in $\operatorname{HD}(S^{3}, K)$. More precisely we have
\begin{equation}
\label{what}
{\rm gr}(\{x, y_{1}\}\times \textbf{p})= {\rm gr}(\textbf{p}).
\end{equation}

With Equations (\ref{relation1}), (\ref{relation2}) and (\ref{what}), the discussion now follows in the same way as Hedden's proof of \cite[Theorem 4.4]{MR2372849}. The only difference is that we make a shift of $r$ or $r+1$ on the right hand of an equation, where Hedden made a shift of $0$ or $1$. 

\end{proof}

Since $|t|$ can be sufficiently large, we assume that $|t|>2g+4$. Applying the adjunction inequality (refer to \cite{MR2065507}) to Lemma \ref{keytheorem} gets another version of Lemma \ref{keytheorem}:

\begin{lemma}
\label{modi1}
Let $K\subset S^{3}$ be a knot with Seifert genus $g(K)=g$. Then for all $t>N>0$, where $N$ is defined as before, there are isomorphisms of absolutely $\mathbb Z$-graded Abelian groups
\begin{align*}
\widehat{\operatorname {HFK}}_{\ast}(S^{3}, K_{t}^{P_{r}}, r+1)&\cong {\mathbb Z}^{t-2g-2}_{(r+1)}\bigoplus^{g}_{m=-g}\left[\widehat{\operatorname {HF}}_{\ast-r-1}(F(K, m))\right]^{2},\\
\widehat{\operatorname {HFK}}_{\ast}(S^{3}, K_{-t}^{P_{r}}, r+1)&\cong {\mathbb Z}^{t-2g}_{(r)}\bigoplus^{g}_{m=-g}\left[\widehat{\operatorname {HF}}_{\ast-r}\left(\frac{\widehat{\operatorname {CF}}(S^{3})}{F(K, m)}\right)\right]^{2},
\end{align*}
where the subindices $r+1$ and $r$ of ${\mathbb Z}$ denote the Maslov gradings of the terms.
\end{lemma}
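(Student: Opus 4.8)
The plan is to feed the two identifications of Lemma~\ref{keytheorem} into the genus input coming from Theorem~\ref{seifert} and the adjunction inequality. First I would record that, since $g(K)=g$, the groups $\widehat{\operatorname{HFK}}(S^{3},K,m)$ vanish for $|m|>g$, and that by the symmetry of knot Floer homology the extreme nonzero Alexander gradings occur exactly at $m=\pm g$. Passing from the associated graded object back to the filtration, the adjunction inequality (\cite{MR2065507}) then shows that the inclusion $F(K,m)\hookrightarrow\widehat{\operatorname{CF}}(S^{3})$ is a quasi-isomorphism for $m\geq g$, so that $\widehat{\operatorname{HF}}(F(K,m))\cong\widehat{\operatorname{HF}}(S^{3})={\mathbb Z}_{(0)}$, whereas $\widehat{\operatorname{HF}}(F(K,m))=0$ for $m\leq -g-1$. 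Dually, for the quotient complex one obtains $\widehat{\operatorname{HF}}(\widehat{\operatorname{CF}}(S^{3})/F(K,m))=0$ for $m\geq g$ and $\widehat{\operatorname{HF}}(\widehat{\operatorname{CF}}(S^{3})/F(K,m))\cong{\mathbb Z}_{(0)}$ for $m\leq -g-1$.

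With these evaluations, I would split each of the two summations in Lemma~\ref{keytheorem}, the one indexed by $m$ and the one indexed by $-m-1$, according to whether the relevant index lies above $g$, inside $[-g,g]$, or below $-g$. Because $|t|>2g+4$, the block $[-g,g]$ sits strictly inside the index set $R=\{\lfloor -t/2+1\rfloor,\dots,\lfloor t/2\rfloor\}$, which has exactly $t$ elements; hence the two ``middle'' contributions combine into $\bigoplus_{m=-g}^{g}[\,\cdot\,]^{2}$, producing the symbolic part of each claimed formula, while every index outside $[-g,g]$ contributes either a single copy of ${\mathbb Z}_{(0)}$ or nothing. The Maslov grading of each surviving ${\mathbb Z}$ is read off from $\widehat{\operatorname{HF}}(S^{3})={\mathbb Z}_{(0)}$ together with the overall grading shift already present in Lemma~\ref{keytheorem}, namely $r+1$ in the positive-twist formula and $r$ in the negative-twist formula; this is what turns them into ${\mathbb Z}_{(r+1)}$ and ${\mathbb Z}_{(r)}$ respectively.

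The actual content is the count of how many copies of ${\mathbb Z}$ survive. For the positive-twist (subcomplex) formula the first summand produces a ${\mathbb Z}$ precisely when $m>g$ and the second precisely when $-m-1>g$, that is when $m\leq -g-2$; the indices producing no new ${\mathbb Z}$ beyond the symbolic block are therefore those with $-g-1\leq m\leq g$, a set of $2g+2$ integers, so the number of copies of ${\mathbb Z}_{(r+1)}$ equals $|R|-(2g+2)=t-2g-2$. For the negative-twist (quotient) formula the two ends swap roles: the first summand gives a ${\mathbb Z}$ when $m\leq -g-1$ and the second when $-m-1\leq -g-1$, i.e.\ when $m\geq g$, so the excluded block is $-g\leq m\leq g-1$, of size $2g$, and the count becomes $t-2g$.

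The main obstacle is exactly this asymmetry between the two counts $t-2g-2$ and $t-2g$. It arises because the second summation is indexed by $-m-1$ rather than by $m$, so the involution $m\mapsto -m-1$ fails to preserve the range $R$; one therefore cannot simply double a single count, and the boundary index absorbed into the symbolic block differs between the two situations ($m=-g-1$ for the subcomplexes, $m=g$ for the quotients). I would perform the integer counting carefully for both parities of $t$, using only that $\lfloor -t/2+1\rfloor$ and $\lfloor t/2\rfloor$ bracket exactly $t$ consecutive integers, to confirm that the totals $t-2g-2$ and $t-2g$ hold uniformly. Apart from this arithmetic the argument is purely formal and parallels the corresponding step in \cite{MR2372849}.
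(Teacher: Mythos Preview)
Your proposal is correct and follows exactly the approach the paper indicates: the paper's own argument is the single sentence ``applying the adjunction inequality to Lemma~\ref{keytheorem},'' and what you have written is precisely a careful unpacking of that sentence, including the bookkeeping that produces the counts $t-2g-2$ and $t-2g$. The asymmetry you flag between the two counts, coming from the shift $m\mapsto -m-1$, is the only real point to check and you handle it correctly.
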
 

Next, we want to replace $\widehat{\operatorname {HF}}\left({\widehat{\operatorname {CF}}(S^{3})}/{F(K, m)}\right)$ by $\widehat{\operatorname {HF}}(F(K, m))$ in Lemma \ref{modi1}. Here we need a lemma, which has a parallel proof to that of Lemma 5.7 in \cite{MR2372849}.

\begin{lemma}
\label{rankrelation}
Let $K\subset S^{3}$ be a knot with Seifert genus $g(K)=g$, and $t>N>0$ as above. Then
\begin{align*}
\widehat{\operatorname {HFK}}_{\ast}(S^{3}, K_{t}^{P_{r}}, r+1)&=\widehat{\operatorname {HFK}}_{\ast}(S^{3}, K_{-t}^{P_{r}}, r+1) \quad \text{if $\ast\neq r,r+1$,}\\
{\rm rk}(\widehat{\operatorname {HFK}}_{r}(S^{3}, K_{t}^{P_{r}}, r+1))&={\rm rk}(\widehat{\operatorname {HFK}}_{r}(S^{3}, K_{-t}^{P_{r}}, r+1))-t-2\tau(K),\\
{\rm rk}(\widehat{\operatorname {HFK}}_{r+1}(S^{3}, K_{t}^{P_{r}}, r+1))&={\rm rk}(\widehat{\operatorname {HFK}}_{r+1}(S^{3}, K_{-t}^{P_{r}}, r+1))+t-2\tau(K).
\end{align*}
\end{lemma}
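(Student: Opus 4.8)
The plan is to deduce everything from the short exact sequence of chain complexes
$$0\longrightarrow F(K,m)\longrightarrow \widehat{\operatorname {CF}}(S^{3})\longrightarrow \widehat{\operatorname {CF}}(S^{3})/F(K,m)\longrightarrow 0,$$
together with its long exact sequence in homology, which is the only device needed to trade $\widehat{\operatorname {HF}}(\widehat{\operatorname {CF}}(S^{3})/F(K,m))$ for $\widehat{\operatorname {HF}}(F(K,m))$ in the two formulas of Lemma \ref{modi1}. Since $\widehat{\operatorname {HF}}(S^{3})\cong {\mathbb Z}$ is supported in Maslov grading $0$, the connecting homomorphism is an isomorphism $\widehat{\operatorname {HF}}_{\ast}(\widehat{\operatorname {CF}}(S^{3})/F(K,m))\cong \widehat{\operatorname {HF}}_{\ast-1}(F(K,m))$ for every $\ast\neq 0,1$, and only the two gradings $\ast=0,1$ require separate attention.

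First I would record the behaviour at those two gradings. The inclusion-induced map $i_{\ast}\colon \widehat{\operatorname {HF}}_{0}(F(K,m))\to \widehat{\operatorname {HF}}_{0}(S^{3})={\mathbb Z}$ is nontrivial exactly when $m\geq \tau(K)$: this is the definition of $\tau(K)$, together with the observation that the inclusions $F(K,m)\subset F(K,m+1)$ make nontriviality monotone in $m$. Reading off the segment of the long exact sequence around $\widehat{\operatorname {HF}}_{0}(S^{3})$ then shows that, on the level of ranks, the homology of the quotient differs from the shifted homology of $F(K,m)$ by a single extra generator, contributed in grading $0$ when $m<\tau(K)$ and removed from grading $1$ when $m\geq\tau(K)$. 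Away from gradings $0$ and $1$ the connecting map is a genuine isomorphism of groups, so, noting that the extra free summands of Lemma \ref{modi1} sit precisely in Maslov grading $r+1$ (for $K_{t}^{P_{r}}$) and $r$ (for $K_{-t}^{P_{r}}$), this immediately yields the first assertion $\widehat{\operatorname {HFK}}_{\ast}(S^{3},K_{t}^{P_{r}},r+1)=\widehat{\operatorname {HFK}}_{\ast}(S^{3},K_{-t}^{P_{r}},r+1)$ for $\ast\neq r,r+1$.

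To finish, I would substitute these relations into Lemma \ref{modi1}. Because the subcomplex terms there are shifted by $r+1$ while the quotient terms are shifted by $r$, the gradings $\ast=0,1$ of the long exact sequence correspond exactly to the satellite gradings $\ast=r,r+1$, so the single-generator corrections accumulate in precisely those two Maslov gradings. Summing over $-g\leq m\leq g$ reduces the whole computation to counting the values of $m$ in this range with $m\geq\tau(K)$ and with $m<\tau(K)$; using the standard bound $|\tau(K)|\leq g(K)$ \cite{MR2026543} these counts are $g-\tau(K)+1$ and $g+\tau(K)$ respectively. Combining them with the constants $t-2g-2$ and $t-2g$ that appear in Lemma \ref{modi1} produces the advertised differences $-t-2\tau(K)$ in Maslov grading $r$ and $+t-2\tau(K)$ in grading $r+1$; note that only ranks are being compared here, so the possible torsion in $\widehat{\operatorname {HF}}$ plays no role. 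The main obstacle is the bookkeeping of the two correction gradings: one must track which of $\widehat{\operatorname {HF}}_{0}$ and $\widehat{\operatorname {HF}}_{1}$ of the quotient gains or loses the generator, and align these against the shifts $r$ versus $r+1$, so that the $\tau(K)$-dependent counts land in the correct Maslov gradings with the correct signs. All of the genuine content is packaged in the translation of the definition of $\tau(K)$ into the nontriviality of $i_{\ast}$; the rest is the additive bookkeeping just described.
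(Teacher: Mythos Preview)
Your argument is correct and is precisely the approach the paper has in mind: the paper does not give its own proof but defers to Hedden's Lemma~5.7 in \cite{MR2372849}, which proceeds exactly by reading off the long exact sequence of $0\to F(K,m)\to \widehat{\operatorname{CF}}(S^{3})\to \widehat{\operatorname{CF}}(S^{3})/F(K,m)\to 0$, using the definition of $\tau(K)$ to locate where $i_{\ast}$ is nontrivial, and then summing over $m$ against Lemma~\ref{modi1}. Your bookkeeping of the counts $g+\tau(K)$ and $g-\tau(K)+1$ and their placement in gradings $r$ and $r+1$ is the same computation carried out there.
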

\begin{proof}[Proof of Theorem \ref{main1}]
By applying Lemma \ref{rankrelation}, we replace $\widehat{\operatorname {HF}}\left({\widehat{\operatorname {CF}}(S^{3})}/{F(K, m)}\right)$ in the second equation of Lemma \ref{modi1} with $\widehat{\operatorname {HF}}(F(K, m))$. Since the rank of an Abelian group does not count the torsion part, we add the torsion part ``Tor" to complete the proof.
\end{proof}

Theorem \ref{main1} is a rough estimate for $\widehat{\operatorname {HFK}}(S^{3}, K_{t}^{P_{r}}, r+1)$. At present we cannot get more concrete information about $\widehat{\operatorname {HFK}}(S^{3}, K_{t}^{P_{r}})$, as Hedden did for Whitehead doubles in \cite{MR2372849}. However, we claim that Theorem \ref{main1} can be used here to determine the Seifert genus of $K_{t}^{P_{r}}$ for $t=0$. 

\section{The Seifert genus of $K_{t}^{P_{r}}$}

First we remark that there are classical lower bounds to the Seifert genus $g(K)$ of a knot $K\subset S^{3}$. One of them is the degree of the Alexander-Conway polynomial. Precisely it is
\begin{equation}
\label{genus}
g(K)\geq \frac{1}{2}{\rm deg}(\Delta_{K}(T)).
\end{equation}
Recall in the case of $K_{t}^{P_{r}}$, we have Equation \eqref{shape}: $$\Delta_{K_{t}^{P_{r}}}(T)=\Delta_{O_{t}^{P_{r}}}(T).$$ It is easy to deduce that
\begin{equation*}
\frac{1}{2}{\rm deg}(\Delta_{K_{t}^{P_{r}}}(T)) =\frac{1}{2}{\rm deg}(\Delta_{O_{t}^{P_{r}}}(T))=
\begin{cases}
r+1 & \text{if $t\neq 0$},\\
r & \text{if $t=0$}.
\end{cases}
\end{equation*}
Applying Relation (\ref{genus}), we get
\begin{equation*}
g(K_{t}^{P_{r}}) \geq
\begin{cases}
r+1 & \text{if $t\neq 0$},\\
r & \text{if $t=0$}.
\end{cases}
\end{equation*}
On the other hand, a Seifert surface of $K_{t}^{P_{r}}$, which is realized by the surface in Figure~\ref{fig:f4}, is of genus $r+1$. Therefore $r+1$ is an upper bound to $g(K_{t}^{P_{r}})$ for any $t\in {\mathbb Z}$. Therefore
\begin{equation*}
\begin{split}
 g(K_{t}^{P_{r}}) = r+1 \quad& \text{if $t\neq 0$},\\
 r \leq g(K_{t}^{P_{r}}) \leq r+1 \quad & \text{if $t=0$}.
\end{split}
\end{equation*}

Better than Alexander-Conway polynomial, knot Floer homology detects the Seifert genus as stated in Theorem~\ref{seifert}.
We will use Theorems \ref{seifert} and \ref{main1} to prove that $g(K_{t}^{P_{r}})=r+1$ holds for $t=0$. First we prove a lemma on the signature of the link $C(2r+1)$.
\begin{lemma}
The signature of the link $C(2r+1)$ is $-1$.
\end{lemma}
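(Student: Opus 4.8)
The plan is to compute the signature of the 2-bridge link $C(2r+1) = C(\underbrace{-2,\dots,-2}_{2r+1})$ directly, and I would approach this by induction on $r$ using the recursive structure of these links together with a skein-type relation for the signature. First I would establish the base case $r=0$, where $C(1) = C(-2)$ is the Hopf link (the two-bridge link corresponding to the continued fraction giving $p_0/q_0 = 2$, i.e. the lens space $L(2,1)$ as its double-branched cover). The negative Hopf link has signature $-1$, which anchors the induction. I would fix the convention that makes all the $-2$ entries into negative full-twists, matching Figure \ref{fig:f18}, so that adding crossings pushes the signature in the correct direction.

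The key tool I expect to use is the behavior of the signature under the skein triple $(L_+, L_0, L_-)$ together with Murasugi's results on the signatures of two-bridge (rational) links. Passing from $C(2r+1)$ to $C(2r-1)$ corresponds to removing a full twist (two crossings) from the clasp region, and I would relate their signatures by analyzing how the Goeritz or Seifert form changes. The cleanest route is to use the fact that for alternating links — and $C(2r+1)$ is alternating — the signature is determined by the Goeritz matrix of the alternating diagram (with the appropriate correction term), and for these specific twist links the Goeritz form is a tridiagonal $\pm 2$ matrix whose signature is readily computed. Alternatively, since the double-branched cover of $C(2r+1)$ is the lens space $L(p_r, q_r)$, one can invoke Murasugi's formula relating the signature of a two-bridge link to a count derived from its continued fraction expansion; feeding in the all-$(-2)$ expansion should yield $-1$ after the count telescopes.

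The inductive step would then read: assuming $\sigma(C(2r-1)) = -1$, I add one more negative full twist. Each negative full twist contributes two negative crossings, and for these alternating twist families the net change to the signature from a full twist on a pair of parallel strands is $0$ once one accounts for the simultaneous change in nullity and the first Betti number of the Seifert surface; I would verify this by tracking the Seifert matrix, which for $C(2r+1)$ is again tridiagonal, and checking that the added block has signature zero. This keeps $\sigma(C(2r+1)) = -1$ for all $r$.

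The hard part, and the main obstacle, will be pinning down the sign conventions so that everything is internally consistent: the sign of a full twist as fixed in Figure \ref{fig:f18}, the orientation conventions on the two-bridge link (recall $C(2r+1)$ is a two-component link, so the signature depends on a choice of orientation), and the normalization of the signature itself. A careless sign choice would flip the answer to $+1$. I would therefore be careful to orient the two components of $C(2r+1)$ compatibly with the embedded picture used to define the patterns $P_r$, and to compute the base-case Hopf link signature under exactly that orientation before propagating the induction.
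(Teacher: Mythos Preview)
Your proposal is more elaborate than what the paper actually does, and the inductive step as you describe it has a gap. The paper simply exhibits an explicit Seifert surface $F_r$ for $C(2r+1)$, reads off a basis of $H_1(F_r)$, and writes down the symmetrized Seifert matrix
\[
V+V^{t}=
\begin{pmatrix}
-2 & 1 & & & \\
1 & 4 & 1 & & \\
 & 1 & -2 & 1 & \\
 & & & \ddots & 1 \\
 & & & 1 & -2
\end{pmatrix},
\]
a tridiagonal matrix whose diagonal alternates $-2,4,-2,\dots,-2$; its signature is $-1$ by an elementary eigenvalue (or Sylvester) argument. You mention this route in passing, and it is by far the cleanest one---no skein moves, no Murasugi formula, no Goeritz correction terms, and the orientation issue is settled once the Seifert surface is drawn.

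The gap in your main line of argument is the inductive step. Passing from $C(2r-1)$ to $C(2r+1)$ in this family appends \emph{two} additional $-2$ entries to the Conway symbol, and in the paper's tangle description these two new boxes carry twists of \emph{opposite} signs (odd-numbered boxes are negative full twists, even-numbered boxes are positive full twists). So ``add one more negative full twist'' does not describe the step from $r-1$ to $r$, and the assertion that the added block contributes signature $0$ is left unjustified. On the matrix side the added $2\times 2$ block is $\begin{psmallmatrix}4 & 1\\ 1 & -2\end{psmallmatrix}$, which indeed has signature $0$, but you would still need to check that the off-diagonal coupling to the previous block does not disturb this---which is exactly the Sylvester computation the paper's direct approach amounts to. I would drop the skein/Goeritz detour and just compute $V+V^t$ from the obvious Seifert surface.
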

\begin{proof}
Consider the Seifert surface $F_{r}$ in Figure \ref{fig:f4}. A generating set of $H_{1}(F_{r})$ is chosen in Figure \ref{fig:f4}. We have
$$V+V^{t}= 
\left( \begin{array}{ccccc}
-2       & 1 &    &        & \huge{O}  \\
1        & 4 & 1  &        &           \\
         & 1 & -2 & 1      &           \\
         &   &    & \ddots &  1        \\
\huge{O} &   &    & 1      & -2
\end{array} \right)
,$$ where $V$ is the Seifert matrix of $C(2r+1)$ associated with the generating set of $H_{1}(F_{r})$ in Figure \ref{fig:f4}. One can easily prove that the signature of $V+V^{t}$ is $-1$, and so is the signature of $C(2r+1)$.
\end{proof}

\begin{figure}[h]
	\centering
		\includegraphics[width=0.7\textwidth]{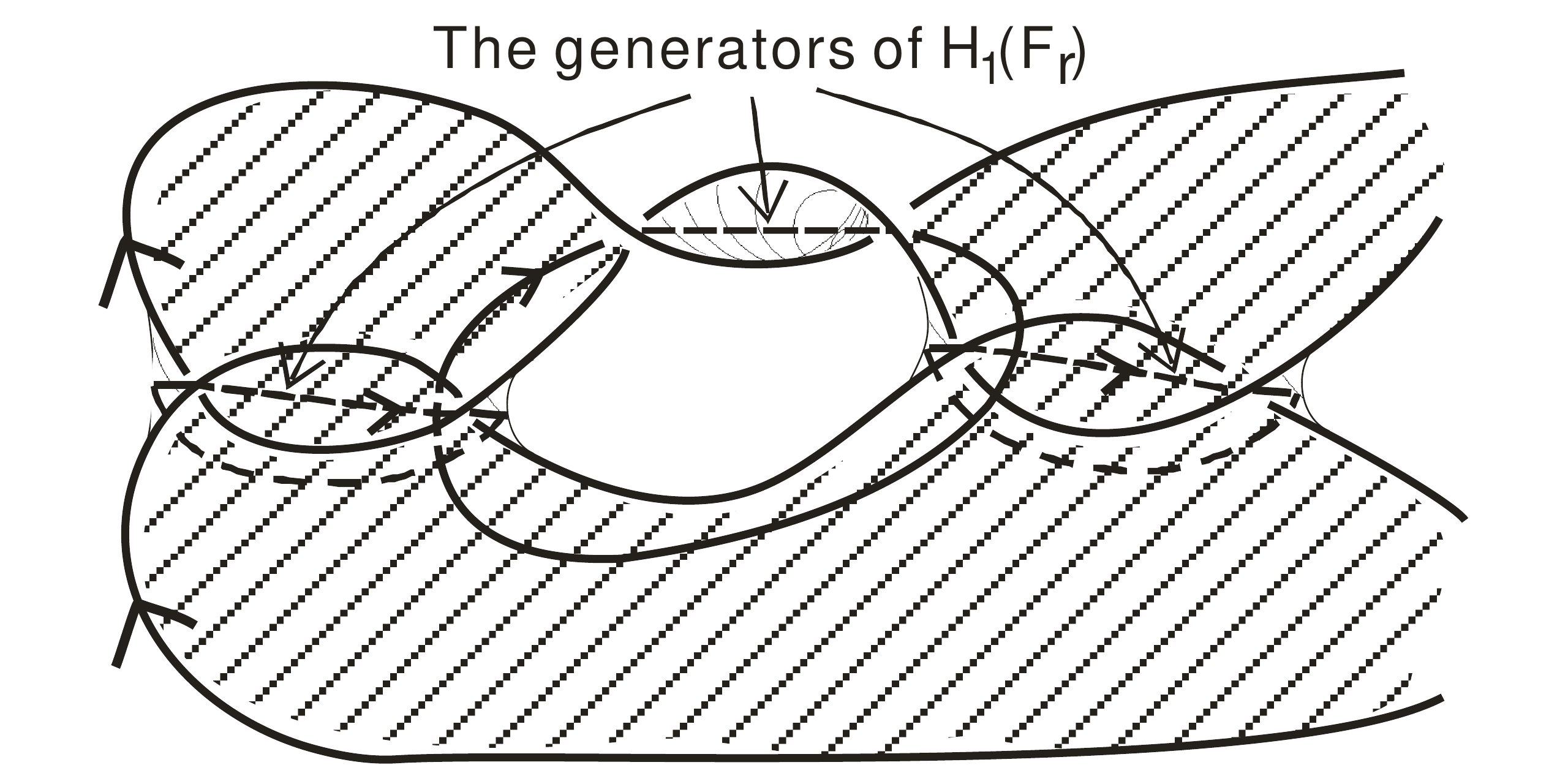}
	\caption{A Seifert surface $F_{r}$ of $C(2r+1)$ and a generating set of $H_{1}(F_{r})$.}
	\label{fig:f4}
\end{figure}

From here, we use ${\mathbb F}:={\mathbb F}_{2}$ instead of ${\mathbb Z}$ as the coefficient of the knot Floer homology. Now we recall a result in \cite{MR2372849}, which will be used later.
\begin{lemma}[Proposition 5.1 in \cite{MR2372849}]
\label{double}
Let $K\subset S^{3}$ be a knot with Seifert genus $g(K)=g$. Then for $t\geq 2\tau(K)$ we have:
$$\widehat{\operatorname {HFK}}_{\ast}(K_{t}^{P_{0}}, 1)\cong {\mathbb F}^{t-2g-2}_{(1)}\bigoplus^{g}_{m=-g}\left[\widehat{\operatorname {HF}}_{\ast-1}(F(K, m))\right]^{2}.$$ 
\end{lemma}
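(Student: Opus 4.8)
The plan is to recognize this as the $r=0$ specialization of our own computation, sharpened to the optimal range $t \geq 2\tau(K)$; indeed it is exactly Hedden's Proposition 5.1 in \cite{MR2372849}, so one legitimate option is simply to cite it. To reprove it within the framework built above, I would proceed as follows.

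First I would specialize Lemma \ref{modi1} to $r=0$, which already yields the asserted isomorphism
$$\widehat{\operatorname{HFK}}_\ast(S^3, K_t^{P_0}, 1) \cong {\mathbb F}^{t-2g-2}_{(1)} \bigoplus_{m=-g}^{g} \left[\widehat{\operatorname{HF}}_{\ast-1}(F(K,m))\right]^2$$
for all sufficiently large $t > N$. Thus the only remaining content is to lower the threshold from the unspecified $N$ down to the sharp value $2\tau(K)$.

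The key step is a stabilization argument describing how $\widehat{\operatorname{HFK}}(S^3, K_t^{P_0}, 1)$ changes as $t$ increases by one. Raising the framing of $\lambda_K$ by one, i.e. passing from $K_t^{P_0}$ to $K_{t+1}^{P_0}$, adds exactly one intersection point to the winding region $\mu_K \times I$ in the diagram $\operatorname{HD}(S_t^3(K), \mu_K)$. Using the grading relations of Equation (\ref{relation1}) together with the adjunction inequality (which, as in Lemma \ref{modi1}, kills $\widehat{\operatorname{HF}}(F(K,m))$ for $|m| > g$), I would show that this extra generator raises the rank of the Maslov grading-$1$ part by exactly one while leaving the summand $\bigoplus_{m=-g}^{g}[\widehat{\operatorname{HF}}_{\ast-1}(F(K,m))]^2$ unchanged, provided $t \geq 2\tau(K)$. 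A downward induction from $t > N$ then carries the formula all the way down to $t = 2\tau(K)$.

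The main obstacle is identifying $2\tau(K)$ as the \emph{exact} threshold at which this clean linear growth begins. Below it the newly created winding-region generators can interact nontrivially with the differential instead of splitting off freely, and the transition point is dictated by the definition of $\tau(K)$ as the least $m$ for which $i_\ast \colon \widehat{\operatorname{HF}}(F(K,m)) \to \widehat{\operatorname{HF}}(S^3)$ is non-trivial. Making this rigorous---tracking the Maslov gradings of the generators $\{x, a_{f(0)}\} \times \{x_k, \textbf{z}\}$ and checking that for $t \geq 2\tau(K)$ none of them cancels against the essential part---is the delicate core of the argument, and is precisely where Hedden's analysis in \cite{MR2372849} does the real work. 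The comparison between the positive and negative twist cases recorded in Lemma \ref{rankrelation}, whose rank correction already involves $2\tau(K)$, is the natural bookkeeping device for pinning down the constant.
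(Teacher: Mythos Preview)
The paper gives no proof of this lemma at all; it is simply imported as Proposition~5.1 of Hedden~\cite{MR2372849}. Your first sentence already identifies this option, and that is exactly what the paper does, so in that sense your proposal matches the paper.

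Your additional sketch of a self-contained reproof is in the right spirit (establish the formula for $t>N$ via Lemma~\ref{modi1}, then descend), but the mechanism you describe for the descent is not the one Hedden actually uses, and as written it has a gap. You propose to lower $t$ by directly tracking the new winding-region intersection point in the Heegaard diagram and invoking Equations~(\ref{relation1}); but those grading relations, and more importantly Theorem~\ref{help} on which Lemma~\ref{modi1} rests, are only established for $t>N$. Pushing the diagram-level analysis down to $t=2\tau(K)$ would require redoing that analysis at each step, which is not automatic. Hedden's actual descent uses the skein exact sequence---the $r=0$ instance of~(\ref{sequence})---relating $K_{t}^{P_0}$, $K_{t-1}^{P_0}$, and the Hopf link; the threshold $2\tau(K)$ appears because that is precisely where the connecting map $f_2$ becomes zero (equivalently $f_1$ becomes surjective), a fact one reads off from the large-$t$ formula combined with the definition of $\tau$. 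You gesture toward this at the end via Lemma~\ref{rankrelation}, which is indeed the right bookkeeping, but the exact-sequence step rather than winding-region stabilization is what makes the induction go through.
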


Recall in Section 3.1, we saw that $$\widehat{\operatorname {HFK}}(S^{3}, C(2r+1), r+1)={\mathbb F}_{({(2r+1)}/{2})},$$ where the subindex ${(2r+1)}/{2}$ denotes the Maslov grading. There exists a skein exact sequence connecting $K_{t-1}^{P_{r}}$, $K_{t}^{P_{r}}$ and $C(2r+1)$ as follows (see Section 2.1):
\begin{equation}
\label{sequence}
\cdots\longrightarrow \widehat{\operatorname {HFK}}(S^{3}, K_{t}^{P_{r}}, r+1) \xrightarrow{\,\,f_{1}\,\,} {\mathbb F}_{({(2r+1)}/{2})} \xrightarrow{\,\,f_{2}\,\,} \widehat{\operatorname {HFK}}(S^{3}, K_{t-1}^{P_{r}}, r+1) \xrightarrow{\,\,f_{3}\,\,}\cdots,
\end{equation}
where $f_{1}$ and $f_{2}$ decrease the Maslov grading by ${1}/{2}$, while $f_{3}$ does not increase the grading. In fact, it can be proved that $f_{3}$ preserves the grading. This fact is stated in \cite{MR2372849} for the case $r=0$ . In general, its proof uses the same idea as that of \cite[Proposition 5.8]{MR2372849}.



\begin{lemma}[Proposition 5.8 in \cite{MR2372849}]
\label{preserve}
In the exact sequence {\rm \eqref{sequence}}, $f_{3}$ preserves the Maslov grading.
\end{lemma}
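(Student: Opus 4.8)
The goal is to show that in the skein exact sequence \eqref{sequence}, the map $f_{3}\colon \widehat{\operatorname{HFK}}(S^{3}, K_{t-1}^{P_{r}}, r+1)\to\widehat{\operatorname{HFK}}(S^{3}, K_{t}^{P_{r}}, r+1)$ preserves the Maslov grading, not merely that it does not increase it. The plan is to mimic Hedden's argument for \cite[Proposition 5.8]{MR2372849} (the case $r=0$), exploiting exactness together with a rank count forced by the already-established structure of the top-grading homology groups. First I would record the precise grading behavior of the other two maps: $f_{1}$ and $f_{2}$ each drop the Maslov grading by $1/2$, and the middle term ${\mathbb F}_{((2r+1)/2)}$ is concentrated in the single Maslov grading $(2r+1)/2$. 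Consequently $f_{1}$ can be nonzero only on the summand of $\widehat{\operatorname{HFK}}(S^{3}, K_{t}^{P_{r}}, r+1)$ in Maslov grading $(2r+1)/2+1/2=r+1$, and $f_{2}$ lands in the summand of $\widehat{\operatorname{HFK}}(S^{3}, K_{t-1}^{P_{r}}, r+1)$ in Maslov grading $(2r+1)/2-1/2=r$.

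The core of the argument is a dimension bookkeeping over each Maslov grading, using the exactness of \eqref{sequence} as a long exact sequence of ${\mathbb F}$-vector spaces. Away from the gradings $r$ and $r+1$ the middle term vanishes, so $f_{3}$ restricted to every Maslov grading $\ast\neq r,r+1$ is automatically an isomorphism and in particular preserves grading; this matches the first line of Lemma \ref{rankrelation}. The only danger is that the a priori inhomogeneous map $f_{3}$ carries a component of strictly negative grading shift out of grading $r+1$ or $r$. To rule this out I would combine the rank relations of Lemma \ref{rankrelation} with exactness: writing the alternating-sum (Euler characteristic) constraint grading-by-grading and comparing the ranks of $\widehat{\operatorname{HFK}}_{\ast}(S^{3}, K_{t}^{P_{r}}, r+1)$ and $\widehat{\operatorname{HFK}}_{\ast}(S^{3}, K_{t-1}^{P_{r}}, r+1)$ in gradings $r$ and $r+1$, one sees that the images and kernels are forced to have exactly the dimensions that a grading-preserving $f_{3}$ would produce. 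Any genuinely grading-lowering component would overfill a kernel or image and violate one of these equalities, yielding a contradiction.

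Concretely, the key steps in order are: (i) pin down the grading shifts of $f_{1}$, $f_{2}$ and the location of the middle term, as above; (ii) decompose $f_{3}$ into its homogeneous components $f_{3}=\sum_{d\leq 0} f_{3}^{(d)}$ with $f_{3}^{(d)}$ shifting Maslov grading by $d$, and observe that only $f_{3}^{(0)}$ and possibly $f_{3}^{(-1)}$ (mapping grading $r+1$ to grading $r$) can interact with the two relevant gradings; (iii) invoke Lemma \ref{rankrelation} to get the exact rank differences in gradings $r$ and $r+1$ between the $t$ and $t-1$ complexes; and (iv) run the exactness count to conclude that $f_{3}^{(-1)}$, and every other strictly grading-lowering component, must vanish on the relevant summands, so that $f_{3}=f_{3}^{(0)}$ preserves the Maslov grading.

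The main obstacle I anticipate is step (iv): controlling the potential grading-lowering component $f_{3}^{(-1)}$ from grading $r+1$ to grading $r$. This is exactly the subtlety that forces one to track absolute Maslov gradings rather than relative ones, and it is where Hedden's original argument does the real work. I expect that the cleanest route is to transport his proof verbatim, since the shift by $r$ or $r+1$ established in Equations \eqref{relation1} and \eqref{relation2} (in place of his shift by $0$ or $1$) affects only the numerical labels and not the internal logic of the exactness-plus-rank comparison; the structural features he uses—the one-dimensionality of the middle term and the rank relations of Lemma \ref{rankrelation}—are precisely the ones reproduced above.
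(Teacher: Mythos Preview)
Your plan has a genuine gap at step~(iv). Exactness together with matching graded ranks does \emph{not} force the grading-lowering components $f_{3}^{(d)}$ with $d<0$ to vanish. Here is a toy exact triangle satisfying all of your hypotheses in which $f_{3}^{(-1)}\neq 0$: take $A=\mathbb{F}_{(r+1)}\oplus\mathbb{F}_{(r)}\oplus\mathbb{F}_{(r-1)}$, $C=\mathbb{F}_{(r)}\oplus\mathbb{F}_{(r-1)}$, $B=\mathbb{F}_{((2r+1)/2)}$, let $f_{1}$ be the projection $A\to A_{r+1}\cong B$, $f_{2}=0$, and set $f_{3}(e_{r})=e_{r}'+e_{r-1}'$, $f_{3}(e_{r-1})=e_{r-1}'$. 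This triple is exact, $f_{3}$ is injective with image $\ker f_{1}$, the graded dimensions of $C$ and $A$ agree below grading $r+1$, and yet $f_{3}$ is not homogeneous. So no ``overfilling'' occurs and the dimension bookkeeping you describe cannot conclude $f_{3}=f_{3}^{(0)}$. Two further problems: Lemma~\ref{rankrelation} compares $K_{t}^{P_{r}}$ with $K_{-t}^{P_{r}}$ for $t>N$, not with $K_{t-1}^{P_{r}}$, so it does not supply the consecutive-twist rank relations you invoke in~(iii); and even if you extract those from Lemma~\ref{modi1} instead, the argument would apply only for $|t|>N$, whereas the lemma is needed for every integer~$t$ in the proof of Corollary~\ref{main2}.

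The paper itself gives no proof here and simply defers to Hedden's argument for \cite[Proposition~5.8]{MR2372849}. That argument is not a rank count: it proceeds by identifying the skein map $f_{3}$, restricted to the top Alexander grading, with a map in the surgery exact triangle relating $\widehat{\operatorname{HF}}$ of $S^{3}_{t-1}(K)$, $S^{3}_{t}(K)$ and $S^{3}$ (via the analogue of Proposition~\ref{identity}), and then reads off the grading shift from Ozsv\'ath--Szab\'o's absolute grading formulas for cobordism-induced maps. That analysis transports to general $r$ with only the uniform shift by $r$ recorded in Equations~\eqref{relation1}--\eqref{relation2}, which is why the present paper can omit the details; but you need to carry out that cobordism/Spin$^{c}$ identification, not a dimension chase.
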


We prove Corollary \ref{main2}.

\begin{proof}[Proof of Corollary \ref{main2}]
First we know $g(K_{0}^{P_{r}})\leq r+1$. We claim that the equality holds by showing that $\widehat{\operatorname {HFK}}(S^{3}, K_{0}^{P_{r}}, r+1)\neq 0$.

If $\widehat{\operatorname {HFK}}(S^{3}, K_{0}^{P_{r}}, r+1)=0$, then
$\widehat{\operatorname {HFK}}_{\ast}(S^{3}, K_{0}^{P_{r}}, r+1)=0, \text{ for $\ast\neq r, r+1$}$. From the exact sequence (\ref{sequence}) combined with Lemma \ref{preserve} we get
\begin{equation*}
\begin{split}
0\longrightarrow \widehat{\operatorname {HFK}}_{*}(S^{3}, K_{s-1}^{P_{r}}, r+1) \xrightarrow{\,\,f_{3}\,\,} \widehat{\operatorname {HFK}}_{*}(S^{3}, K_{s}^{P_{r}}, r+1) \longrightarrow 0, \\
\end{split}
\end{equation*}
for $\ast\neq r, r+1$, which implies that 
\begin{equation*}
\begin{split}
\widehat{\operatorname {HFK}}_{*}(S^{3}, K_{s}^{P_{r}}, r+1)\cong \widehat{\operatorname {HFK}}_{*}(S^{3}, K_{0}^{P_{r}}, r+1)\cong 0,\\
\end{split}
\end{equation*}
for $\ast\neq r, r+1$ and any $s \in {\mathbb Z}$. This property together with Theorem \ref{main1} implies that 
\begin{equation}
\label{property}
\bigoplus^{g}_{m=-g}\left[\widehat{\operatorname {HF}}_{\ast}(F(K, m))\right]^{2}=0, 
\end{equation}
for $\ast\neq -1,0$.

Now we focus on the exact sequence
\begin{multline}
\label{sequence1}
0\longrightarrow \widehat{\operatorname {HFK}}_{r+1}(S^{3}, K_{s-1}^{P_{r}}, r+1)\xrightarrow{\,\,f_{3}\,\,} \widehat{\operatorname {HFK}}_{r+1}(S^{3}, K_{s}^{P_{r}}, r+1) \xrightarrow{\,\,f_{1}\,\,}\\ 
{\mathbb F}_{(\frac{2r+1}{2})}\xrightarrow{\,\,f_{2}\,\,}\widehat{\operatorname {HFK}}_{r}(S^{3}, K_{s-1}^{P}, r+1)
\xrightarrow{\,\,f_{3}\,\,} \widehat{\operatorname {HFK}}_{r}(S^{3}, K_{s}^{P}, r+1)\longrightarrow 0.
\end{multline}

The assumption $\widehat{\operatorname {HFK}}(S^{3}, K_{0}^{P_{r}}, r+1)=0$ implies $\widehat{\operatorname {HFK}}_{*}(S^{3},K_{0}^{P_{r}}, r+1)=0$ for $*=r, r+1$. When $s=1$ by applying the exact sequence (\ref{sequence1}) we get 
$$\widehat{\operatorname {HFK}}_{r}(S^{3}, K_{1}^{P_{r}}, r+1)=0, \widehat{\operatorname {HFK}}_{r+1}(S^{3}, K_{1}^{P_{r}}, r+1)\cong {\mathbb F}.$$
Then when $s=2$, we have
$$\widehat{\operatorname {HFK}}_{r}(S^{3}, K_{2}^{P_{r}}, r+1)=0, \widehat{\operatorname {HFK}}_{r+1}(S^{3}, K_{2}^{P_{r}}, r+1)\cong {\mathbb F}^{2}.$$
We continue to increase $s$ and apply the exact sequence (\ref{sequence1}) iteratively. When $s$ is sufficiently large, say $s=S$ for some large $S>0$, we have 
$$\widehat{\operatorname {HFK}}_{r}(S^{3}, K_{S}^{P_{r}}, r+1)=0, \widehat{\operatorname {HFK}}_{r+1}(S^{3}, K_{S}^{P_{r}}, r+1)\cong {\mathbb F}^{S}.$$

On the other hand, let $s$ start from $0$. Decreasing $s$ and applying (\ref{sequence1}) iteratively, we get
\begin{align*}
s=0: \widehat{\operatorname {HFK}}_{r}(S^{3}, K_{-1}^{P_{r}}, r+1) &\cong{\mathbb F}, \widehat{\operatorname {HFK}}_{r+1}(S^{3}, K_{-1}^{P_{r}}, r+1)=0,\\
s=-1: \widehat{\operatorname {HFK}}_{r}(S^{3}, K_{-2}^{P_{r}}, r+1) &\cong{\mathbb F}^{2}, \widehat{\operatorname {HFK}}_{r+1}(S^{3}, K_{-2}^{P_{r}}, r+1)=0,\\
\vdots\\
s=-S: \widehat{\operatorname {HFK}}_{r}(S^{3}, K_{-S}^{P_{r}}, r+1) &\cong{\mathbb F}^{S}, \widehat{\operatorname {HFK}}_{r+1}(S^{3}, K_{-S}^{P_{r}}, r+1)=0. 
\end{align*}
Let $S>N$, where $N$ is the integer stated in Theorem \ref{main1}. Comparing the knot Floer homologies of $K_{-S}^{P_{r}}$ and $K_{S}^{P_{r}}$ stated above with Theorem \ref{main1}, we get the following restrictions to $\widehat{\operatorname {HF}}(F(K,m))$:
\begin{align*}
\bigoplus^{g}_{m=-g}\left[\widehat{\operatorname {HF}}_{-1}(F(K, m))\right]^{2}&\cong 0,\\
{\mathbb F}^{S-2g-2}\bigoplus^{g}_{m=-g}\left[\widehat{\operatorname {HF}}_{0}(F(K, m))\right]^{2}&\cong {\mathbb F}^{S},\\
{\mathbb F}^{S+2\tau(K)} \bigoplus^{g}_{m=-g}\left[\widehat{\operatorname {HF}}_{-1}(F(K, m))\right]^{2} &\cong {\mathbb F}^{S},\\
{\mathbb F}^{2\tau(K)-2g-2}\bigoplus^{g}_{m=-g}\left[\widehat{\operatorname {HF}}_{0}(F(K, m))\right]^{2}&\cong 0.\\
\end{align*}

Combining the properties above together with (\ref{property}), we can derive the following properites of the knot $K$:
\begin{enumerate}
	\item $\bigoplus^{g}_{m=-g}\left[\widehat{\operatorname {HF}}_{\ast}(F(K, m))\right]^{2}=0 \,\,\,\,\rm{for}\,\, \ast\neq 0$,
	\item $\bigoplus^{g}_{m=-g}\left[\widehat{\operatorname {HF}}_{0}(F(K, m))\right]^{2}\cong {\mathbb F}^{2g+2}$,
	\item $\tau(K)=0$.
\end{enumerate}
If a knot $K\subset S^{3}$ satisfies the properties stated above, Lemma \ref{double} implies that $\widehat{\operatorname {HFK}}(S^{3}, K_{0}^{P_{0}}, 1)=0$, which in turn implies that the genus of $K_{0}^{P_{0}}$ is zero according to Theorem \ref{seifert}. That is to say $K_{0}^{P_{0}}$ is the unknot, which happens only when $K$ itself is the unknot. This contradicts our assumption that $K$ is non-trivial. Therefore, we get $\widehat{\operatorname {HFK}}(K_{t}^{P_{r}}, r+1)\neq 0$, which implies $g(K_{0}^{P_{r}})=r+1$.
\end{proof}

\bibliographystyle{siam}
\bibliography{paper}

\end{document}